\newcommand{\K}{\mathbb{K}}
\newcommand{\N}{\mathbb{N}}
\newcommand{\C}{\mathbb{C}}
\newcommand{\HH}{\mathbb{H}}
\newcommand{\Z}{\mathbb{Z}}
\newcommand{\A}{\mathcal{A}}
\newcommand{\R}{\mathbb{R}}
\newcommand{\1}{\mathbf{1}}
\newcommand{\B}{\mathcal{B}}
\renewcommand{\S}{\mathfrak{S}}
\newcommand{\G}{\Gamma}
\newcommand{\Van}{\Delta}
\newcommand{\abb}[5]{%
\setlength{\arraycolsep}{0.4ex}%
\begin{array}{rcccc}%
#1 &:\,& #2 & \,\,\longrightarrow\,\, & #3 \\[0.9ex]%
     & & #4 & \longmapsto & #5%
\end{array}%
}
\newcommand\restr[2]{{
  \left.\kern-\nulldelimiterspace 
  #1 
  \littletaller 
  \right|_{#2} 
  }}
\newcommand{\littletaller}{\mathchoice{\vphantom{\big|}}{}{}{}}
\DeclareMathOperator{\sgn}{sgn}
\DeclareMathOperator{\Tr}{Tr}
\DeclareMathOperator{\Sym}{Sym}
\DeclareMathOperator{\relint}{relint}
\DeclareMathOperator{\Stab}{Stab}
\DeclareMathOperator{\Hom}{Hom}
\DeclareMathOperator{\Com}{Com}
\DeclareMathOperator{\End}{End}
\DeclareMathOperator{\OO}{O}
\DeclareMathOperator{\ev}{eval}
\DeclareMathOperator{\CStab}{CStab}
\DeclareMathOperator{\RStab}{RStab}
\DeclareMathOperator{\Id}{Id}
\begin{document}

\title*{Symmetries in polynomial optimization}

\author{Philippe Moustrou and Cordian Riener and Hugues Verdure}

\institute{Philippe Moustrou \at Institut de Mathématiques de Toulouse, Université Toulouse Jean Jaurès, 5 Allée Antonio Machado, 31058 Toulouse, France \email{philippe.moustrou@math.univ-toulouse.fr }
\and Cordian Riener \at Institutt for Matematikk og Statistikk, UiT - Norges arktiske universitet, 9037 Tromsø, Norway \email{cordian.riener@uit.no}
\and Hugues Verdure \at Institutt for Matematikk og Statistikk, UiT - Norges arktiske universitet, 9037 Tromsø, Norway \email{hugues.verdure@uit.no}}
%
%
\maketitle

\abstract{This chapter investigates how  symmetries can be used to reduce the computational complexity in polynomial optimization problems. 
A focus will be specifically given on the Moment-SOS hierarchy in polynomial optimization, where results from representation theory and invariant theory of groups can be used.
In addition, symmetry reduction techniques which are more generally applicable are also presented.}

\section{Introduction}
\begin{quote}
{\it Symmetry is a vast subject, significant in art and nature. Mathematics lies at its root,
and it would be hard to find a better one on which to demonstrate the working of the
mathematical intellect. }\vspace{0.5cm}\\
\hfill Hermann Weyl
\end{quote}
Polynomial optimization  is concerned with   optimization problems  expressed by  polynomial functions. Problems of this kind can arise in many different contexts, including engineering, finance, and computer science. Although these problems may be formulated  in a rather elementary way, they are in fact challenging and  solving  such problems is known to be algorithmically hard in general. It is therefore  beneficial to explore the algebraic and geometric structures underlying a given problem to design more efficient algorithms, and a kind of structure which is omnipresent in algebra and geometry is symmetry. In the language of algebra, symmetry is the invariance of an object or a property by some action of a group. The goal of the present chapter is to present  techniques which allow to reduce the complexity of an optimization problem with symmetry. Since it is impossible to give an exhaustive and detailed description of this vast domain, our goal here is to focus mainly on the Moment-SOS hierarchy in polynomial optimization and elaborate how the computation of such approximations can be simplified using results from representation theory and invariant theory of groups. In addition to these approaches we also mention some more general results, which allow to reduce the symmetry directly on the formulation of the polynomial optimization problem.

\begin{overview}{Overview}
 We begin first with a short presentation of  the basic Moment-SOS hierarchy in global polynomial optimization and semidefinite programming in the following Section. We do not give a very extended exposition of this topic but  limit Section~\ref{sos}  to defining the concepts  which are essential for this chapter.  Section~\ref{seq:rep}, which  is devoted to the representation theory of (finite) groups, lays out a first  set of tools allowing for reduction of  complexity. We begin in Subsection \ref{ssec:BasicRep} with a survey of the central ideas of representation theory from the point of view of symmetry reduction, where the main focus is on Schur's Lemma and its consequences. Subsection \ref{ssec:RepSn} then gives a short collection of central combinatorial objects which are used to understand the  representation theory of the symmetric group. Finally, Subsection \ref{ssec:repSDP} outlines how representation theory can be used to simplify semidefinite programs via block-diagonalization of  matrices which are commuting with the group action. Building on representation theory we then turn to invariant theory in Section~\ref{sec:invariant} which allows to closer study the specific situation of sums of squares which are invariant by a group. We begin with a basic tutorial on invariant theory in Subsection~\ref{ssec:BasInv}. Since this theory is easier  in the particular situation of finite reflection groups, we put a special emphasis on these groups. Following this introduction we show in Subsection~\ref{ssec:InvSOS} how the algebraic relationship between polynomials and invariant polynomials can be used to gain additional understanding of the structure of invariant sums of squares.  These general results are then exemplified in Subsection~\ref{ssec:Symsos} for the case of symmetric sums of squares.  Finally, Section~\ref{ssec:Misc} highlights some additional techniques for symmetry reduction: firstly, we overview in Subsection \ref{ssec:OrbSp} how rewriting the polynomial optimization problem in terms of invariants and combining it with the semialgebraic description of the orbit space of the group action can be used. In Subsection~\ref{ssec:AMGM}, we show that even in situations where Schur's Lemma from representation theory does not directly apply, one can already obtain good complexity reduction by structuring computations per orbit, and we illustrate this idea in the context of so-called SAGE certificates.  We conclude, in Subsection \ref{ssec:SymSol} with some results guaranteeing the existence of structured optimizers in the context of polynomial optimization problems with symmetries. 
\end{overview}

\section{Preliminaries on the Moment-SOS hierarchy in polynomial optimization and semidefinite programming}\label{sos}
Given real polynomials $f,g_1,\ldots,g_m\in\R[X_1,\ldots,X_n]$, a polynomial optimization problem is an optimization task of the following kind
\begin{equation}
\label{eq:opt1}
  \begin{array}{rcll}
  f^* & = & \inf\big\{ f(x)\,:\, g_1(x) \ \ge \ 0, \ldots, g_m(x) \ \ge \ 0 \, ,x\in\R^n\big\}.
  \end{array}
\end{equation}
As motivated above, the task of finding the optimal value of a such problems arises naturally in many applications. However, it is also known that this class of problems is algorithmically hard (\cite{muarty}) and a given problem might be practically impossible to solve. One approach to overcome such challenges consists in relaxing the problem: instead of solving the original hard problem, one can relax the hard conditions and define a new problem, easier to solve, and whose solution might still be close to the solution of the original problem. This idea has produced striking new ways to approximate hard combinatorial problems, such as the Max-Cut problem \cite{GW}.  One quite successful approach to relax polynomial optimization problems uses the connection of positive polynomials to moments and sums of squares of polynomials. We shortly outline this approach in the beginning since a big focus of this chapter deals with using symmetries in this setup. The overview we give here is short and a reader who is not familiar with the concepts is also advised to consult \cite{laurent-2009, lassere-book} and in particular the original works by Lasserre \cite{lasserre-2001} and Parrilo \cite{parrilo}. For simplifications we explain the main ideas only in the case of global optimization, i.e., when the additional polynomial constraints are trivial. In this case the problem we are interested to solve is to find \begin{equation}\label{pop}
f^* =\inf \left\{f(x)\,:\, x\in\R^n\right\}.\end{equation} This problem is in general a non-convex optimization problem and it can be beneficial to slightly change perspective in order to obtain the following equivalent formulations which are in fact convex optimization problems. Firstly, one can associate to a point $x\in\R^n$ the Dirac measure $\delta_x$ which leads to the following reformulation of the problem:
\begin{equation}\label{pop1}
f^*=\inf \left\{\int f(x)d\mu(x)\,:\, \int 1 d\mu(x)=1\right\},
\end{equation}
where the infimum is considered over all probability measures $\mu$ supported in $\R^n$. Since the Dirac measures $\delta_x$ of every point are feasible solutions this reformulation clearly yields the same solution.
Secondly, one can also reformulate in the following way:
\begin{equation}\label{pop2}
f^* =\sup \left\{\lambda\,:\, f(x)-\lambda\geq 0\;\forall x\in\R^n\right\}.
\end{equation}
It follows from results of Haviland \cite{haviland1936momentum} that both of the above presented reformulations are dual to each other.
Even though the second formulation is equivalent from the algorithmic perspective, it points to the core of the algorithmic hardness of the polynomial optimization problem: it is algorithmically hard to test if a given polynomial $f\in\R[X_1,\ldots,X_n]$ is non-negative, i.e., if $f(x)\geq 0$ for all $x\in\R^n$. Therefore an approach to obtain approximations for $f^*$ which are easier to calculate  consists in replacing this condition with one that is easier to check, but still ensures non-negativity. We say that a polynomial $f\in\R[X_1,\ldots,X_n]$ is a \emph{sum of squares (SOS)} if it can be written as $f=p_1^2+\ldots+p_l^2$ for some polynomials $p_1,\ldots,p_l\in\R[X_1,\ldots,X_n]$. Clearly every such polynomial is also non-negative and  we can consider the problem
\begin{equation}\label{eq:sos}
f^{SOS} =\sup \left\{\lambda\,:\, f(x)-\lambda \text{ is a SOS}\right\}.\end{equation}
Clearly we have $f^{SOS}\leq f^*$, and as not every non-negative polynomial is a sum of squares, as was shown by Hilbert \cite{hilbert1888darstellung}, this inequality will not be sharp in general. On the other hand, having a concrete decomposition into sums of squares provides an \emph{algebraic certificate} of non-negativity. So one obtains an algebraically verifiable proof for the solution of the bound.  
This strengthening of the optimization problem gives, on the dual side, a relaxation of the moment formulation.
The main feature which makes this approach also interesting for practical purposes comes from the fact that one can find a SOS-decomposition with the help of semidefinite programming. This was used by Lasserre \cite{lasserre-2001}  to define a hierarchy for polynomial optimization based on the sums of squares strengthening \eqref{eq:sos} and the corresponding relaxation of \eqref{pop1}. In the context of polynomial optimization on compact sets semi algebraic sets  this yields  (some additional assumptions)  yields a converging sequence of approximations each of which can be obtained by a semidefinite program (see \cite{laurent-2009, lassere-book} for more details on these hierarchies).

\begin{trailer}{Semidefinite programming}
Semidefinite programming (SDP) is an  optimization paradigm which was developed  in  the 1970s as a  generalization of linear programming. The main setup is to maximize/minimize a linear function over a convex set which is defined by the requirement that a certain matrix is positive semidefinite. We denote by $\Sym_n(\R)$ the set of all real symmetric $n\times n$ matrices.  Then a matrix $A\in\Sym_n(\R)$ is called \emph{positive semidefinite} if $x^{T}A x\geq 0 $ for all $x\in\R^{n}$. In this case, we write $A  \succeq 0$. Furthermore  for $A,B\in\Sym_n(\R)$ we consider their scalar product \[\langle A,B\rangle =\Tr(A\cdot B).\]
The set of all symmetric matrices $A\in\Sym_n(\R)$ which are positive semidefinite defines a convex cone inside $\Sym_n(\R)$. With these notations we can define what an SDP is.
\begin{definition}
Let  $C, A_1,\ldots, A_m\in \Sym_n(\R)$ be symmetric matrices and let $b\in \R^m$ \. Then  a semidefinite program is an  optimization problem of the form 
\[
  \begin{array}{rcl}
 \multicolumn{3}{l}{y^*:=\inf \, \langle X,C \rangle} \\
  \text{}{s.t.} \quad \langle X, A_i \rangle & = & b_i \, , \quad 1 \le i \le m \, , \\
    X & \succeq & 0 , \text{ where }X \in \Sym_n(\R) \, .
  \end{array}
\]
\end{definition}
 The \emph{feasible set} $$\mathcal{L}:=\{X\in\Sym_n(\R)\,:\,\,\langle A_i, X \rangle = b_i \, ,  1\le i \le m \, , \,X  \succeq 0\}$$ is a convex set. 
 
 The main feature which spiked the interest in this class of optimization problem is that they are on the one hand  practically solvable (see \cite{nesterov1994interior, ramana1997exact} for more details) but on the other hand can be used to design good approximation of otherwise algorithmically hard optimization problems (see for example \cite{wolkowicz2012handbook} for a detailed overview). 
 \end{trailer}
\begin{trailer}{The Lov\'asz number  of a graph}
One example in which SDPs have proven to be especially powerful is in combinatorial optimization. In fact the seminal paper \cite{lovasz79} in which   Lov\'asz  introduced the  parameter $\vartheta$ defined below as the solution to a semidefinite program was one of the first instances where the formulation of SDPs arose. The combinatorial problem for which these notions were designed is related to the following parameters of a graph.  

\begin{definition}
  Let $\G=(V,E)$ be a finite graph.
 \begin{enumerate} 
  \item A set $S\subset V$ is called \emph{independent}, if no two elements in $S$ are connected by an edge. 
  Let  ${\alpha (\G)}$ denote the cardinality of the \emph{largest independent set} in $\G$.
  \item For $k\in \N$  a \emph{$k$-coloring} of the vertices of $\G$ is a distribution of $k$ colors to the vertices of $\G$ such that two neighbouring vertices obtain different  colors.
  The number ${\chi(\G)}$ denotes the smallest $k$ such that there is a $k$-coloring of the vertices in  $\G$.
  \end{enumerate}
\end{definition}
Let $V={v_1,\ldots,v_n}$ and $S\subset V$ be any independent vertex set. We consider the characteristic function $\1_S$ of $S$. Using this function, we can construct the following $n\times n$ matrix $M$:
$$M_{i,j}=\frac{1}{|S|} \1_S(v_i)\1_S(v_j).$$
It is clear that $M\in\Sym_n(\R)$. Furthermore, clearly $M\succeq 0$. Additionally, since $\1_S$ is the characteristic function of $S$ and $S$ consists only of vertices that do not share an edge, the following three properties of the matrix $M$ hold:
\begin{enumerate}
\item $M_{i,j}=0$ if $\{v_i,v_j\}\in E,$
\item $\sum_{v_i\in V} M_{i,j}=1,$
\item $\sum_{\{v_i,v_j\}\in V\times V} M_{i,j}=|S|$.
\end{enumerate}
With these properties of $M$ in mind, one defines the $\vartheta$-number of a graph $\Gamma$.
\begin{definition}\label{def:theta}
Let $\Gamma=(V,E)$ be a graph with vertex set $V=\{v_1,v_2,\dots,v_n\}$. Then the $\vartheta$-number of $\Gamma$ is defined  as the solution to the following SDP:\begin{equation}\label{theta primal}
\begin{array}{lll} 
  \vartheta(\Gamma) & = \sup\big\{  \sum_{i,j} B_{i,j} : &
  B\in \Sym_n(\R),\ B \succeq 0 \\
&&   \sum_i B_{i,i}=1,\\
&&   B_{i,j}=0 \quad  \forall (i,j)\in E\big\}\\
\end{array}
\end{equation}
\end{definition}

Note that the above mentioned graph invariants are known to be hard to compute (see \cite{gare1974some, karp1972reducibility}). On the other hand, the $\vartheta$-number is defined as the optimal  solution of a semidefinite program and thus  easier to calculate. In fact,  Lov\'asz could show the following remarkable relationship.
\begin{theorem}[Sandwich theorem]
With the notions defined above we have:
\begin{equation*}
\alpha(\Gamma)\leq \vartheta(\Gamma)\leq \chi(\overline{\Gamma})
\end{equation*}
\end{theorem}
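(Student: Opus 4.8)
The plan is to prove the two inequalities in the sandwich theorem separately, using the combinatorial construction of feasible matrices that was set up just before the statement.

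For the lower bound $\alpha(\Gamma) \le \vartheta(\Gamma)$, I would use exactly the matrix $M$ constructed in the preceding paragraph. Given a maximum independent set $S$ with $|S| = \alpha(\Gamma)$, the matrix $M_{i,j} = \frac{1}{|S|}\1_S(v_i)\1_S(v_j)$ is symmetric and positive semidefinite (it is a nonnegative scalar multiple of the rank-one matrix $\1_S \1_S^T$). Property (1) above shows $M_{i,j} = 0$ whenever $\{v_i,v_j\} \in E$, and property (2) with $j$ ranging over $S$ (together with $M_{i,i} = \frac{1}{|S|}\1_S(v_i)$) gives $\sum_i M_{i,i} = \frac{|S|}{|S|} = 1$. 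Hence $M$ is feasible for the SDP \eqref{theta primal}, and its objective value is $\sum_{i,j} M_{i,j} = \frac{1}{|S|}\bigl(\sum_i \1_S(v_i)\bigr)^2 = \frac{|S|^2}{|S|} = |S| = \alpha(\Gamma)$. Since $\vartheta(\Gamma)$ is a supremum over feasible matrices, $\vartheta(\Gamma) \ge \alpha(\Gamma)$.

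For the upper bound $\vartheta(\Gamma) \le \chi(\overline{\Gamma})$, I would argue that any feasible $B$ has objective value at most $k := \chi(\overline{\Gamma})$. A proper $k$-coloring of $\overline{\Gamma}$ partitions $V$ into $k$ classes $C_1, \dots, C_k$, each of which is independent in $\overline{\Gamma}$, i.e. a clique in $\Gamma$. The natural route is to exhibit a feasible solution of the \emph{dual} SDP with value $k$ and invoke weak duality; concretely, one builds a symmetric matrix of the form $Z = k\,(\mathrm{Id}) - J + (\text{slack supported on edges})$ that is positive semidefinite, where the slack is assembled block-wise from the all-ones blocks on each clique $C_\ell$. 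Pairing $\langle B, Z\rangle \ge 0$ with the constraints $\sum_i B_{i,i} = 1$ and $B_{i,j} = 0$ on edges then yields $\sum_{i,j} B_{i,j} \le k$. Alternatively one can stay entirely primal: using $B \succeq 0$ one shows the restriction of $B$ to each clique-block contributes at most its diagonal mass (since on a clique all off-diagonal entries are forced to zero... wait, they are forced to zero only on edges of $\Gamma$, and a clique of $\Gamma$ has all those edges), so $\sum_{i,j\in C_\ell} B_{i,j} = \sum_{i \in C_\ell} B_{i,i}$, and a Cauchy--Schwarz / PSD estimate across the $k$ blocks combined with $\sum_i B_{i,i} = 1$ gives the bound $k$.

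The routine parts are the feasibility check and the objective computation for the lower bound, which are immediate from properties (1)--(3). The main obstacle is the upper bound: one must either correctly construct the dual-feasible (or suitably structured primal) certificate of value $\chi(\overline{\Gamma})$ and verify its positive semidefiniteness, or carry out the block estimate carefully — the subtlety being that the off-diagonal entries of $B$ on a clique of $\Gamma$ vanish, so each color class contributes only its diagonal, and then a convexity/PSD argument bounds the total by the number of classes. I would present the dual-certificate version, since it makes the role of weak duality explicit and mirrors the standard proof of the Lovász sandwich theorem.
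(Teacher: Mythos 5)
Your lower-bound argument coincides exactly with what the paper sets up: the paper itself states the sandwich theorem without proof (deferring to Lov\'asz), but the matrix $M=\frac{1}{|S|}\1_S\1_S^T$ constructed just before the statement is precisely the feasible point you use, and your feasibility check and objective computation $\sum_{i,j}M_{i,j}=|S|$ are correct. For the upper bound, which the paper does not treat at all, both of your routes are sound and standard; the dual certificate you sketch can be made completely explicit: if $C_1,\dots,C_k$ are the color classes of a proper $k$-coloring of $\overline{\Gamma}$ (hence cliques of $\Gamma$), set $Z:=k\sum_{\ell=1}^k \1_{C_\ell}\1_{C_\ell}^T - J$, where $J$ is the all-ones matrix. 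For any $v\in\R^n$, Cauchy--Schwarz gives $\bigl(\sum_{\ell} \1_{C_\ell}^T v\bigr)^2 \le k\sum_{\ell} \bigl(\1_{C_\ell}^T v\bigr)^2$, so $Z\succeq 0$; moreover $Z = k\,\Id - J + S$ with $S:=k\bigl(\sum_{\ell} \1_{C_\ell}\1_{C_\ell}^T-\Id\bigr)$ supported only on within-class off-diagonal pairs, i.e.\ on edges of $\Gamma$. Hence for any $B$ feasible for \eqref{theta primal}, $0\le\langle B,Z\rangle = k\sum_i B_{i,i}-\sum_{i,j}B_{i,j}$, giving $\sum_{i,j}B_{i,j}\le k$. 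Your purely primal variant also works, but note that the cross-class entries of $B$ need not vanish, so the Cauchy--Schwarz step across the $k$ blocks (equivalently, writing $B$ as a Gram matrix of vectors $u_i$ and estimating $\|\sum_i u_i\|^2\le k\sum_{\ell}\|\sum_{i\in C_\ell}u_i\|^2$ with $\sum_{i\in C_\ell}\!$ pairwise orthogonal inside each clique) is genuinely needed --- a subtlety you correctly flag.
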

Furthermore, for the class of perfect graphs the  $\vartheta$-number actually provides a sharp bound. Thus, in these cases, and as $\alpha(\Gamma)$ is an integer, semidefinite programming yields polynomial time algorithms for computing these graph parameters. 
\end{trailer}
 Following  Lov\'asz's work, various different SDP approximations of hard problems have been proposed, for instance in coding theory and sphere packing (see \cite{gijswijt2006new, laurent2007strengthened, bachoc2008new, de2015semidefinite, DeLaat2022k}).
 \begin{trailer}{Connecting  SDPs with sums of squares}
In the context of polynomial optimization the relation to semidefinite approximations comes through the following observation originally due to Powers and Wörmann \cite{Powers-Wormann98}  that  one can obtain a sums of squares
decomposition of a given polynomial $f$ via the so-called Gram Matrix method, which
transfers this question into a semi-definite program. This connection is established in the following way: Let $p \in \R[X_1,\ldots,X_n]$ be a polynomial  of even degree $2d$. With a slight abuse of notation we denote by  $Y^d$ a vector  containing  all $\binom{n+d}{d}$ monomials in the variables $X_1, \ldots, X_n$ of degree at most $d$. Thus, every polynomial $s=s(X)$ of degree $d$ is uniquely determined by its coefficients  relative to $Y$. Now assume that  $p$ decomposes  into a form

\[
  p = \sum_j (s_j(X))^2 \quad \text{ with polynomials } s_j \text{ of degree at most } d .\, 
\]
Then with the above notation we can rewrite this as 

\[
 p \ = \ (Y^d)^T  \big( \sum_j s_j s_j^T \big)  Y^d \, ,
\]
where now each $s_j$ denotes the coefficient vector  of the polynomials $s_j(X)$. In this case the matrix $Q:=\sum_j s_j s_j^T$ is positive semi-definite. Since by the so called Cholesky decomposition every positive semidefinite matrix $A\in\Sym_n(\R)$ can be written in the form $A=\sum_{j}a_ja_j^T$ for some $a_j\in\R^n$ we see that the above line of argument is indeed an equivalence and so the existence of a sum of squares decomposition of $p$ follows by providing a feasible solution to a semi-definite program, i.e we have 
\begin{proposition}\label{prop:sossemidefinite}
A polynomial $p \in \R[X]$ of degree $2d$ is a sum of squares, if and only if there is a  positive semi-definite matrix $Q$ with
\[
p \ = \ Y^T Q Y.
\]\end{proposition}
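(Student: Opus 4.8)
The plan is to prove both implications of the equivalence, using the computation with the monomial vector $Y$ that was set up just before the statement. Throughout I write $Y = Y^d$ for the vector of all $\binom{n+d}{d}$ monomials in $X_1,\ldots,X_n$ of degree at most $d$, so that any polynomial $s$ of degree at most $d$ corresponds to a unique coefficient vector $\hat s \in \R^{\binom{n+d}{d}}$ with $s(X) = \hat s^T Y$.

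\textbf{The ``only if'' direction.} Suppose $p = \sum_{j=1}^{l} s_j(X)^2$ with each $s_j$ of degree at most $d$. Writing $s_j(X) = \hat s_j^T Y = Y^T \hat s_j$, I get $s_j(X)^2 = Y^T (\hat s_j \hat s_j^T) Y$, and summing over $j$ yields $p = Y^T Q Y$ with $Q := \sum_{j=1}^l \hat s_j \hat s_j^T$. This $Q$ is symmetric, and for any $v \in \R^{\binom{n+d}{d}}$ one has $v^T Q v = \sum_j (v^T \hat s_j)^2 \ge 0$, so $Q \succeq 0$. This gives the desired positive semidefinite Gram matrix.

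\textbf{The ``if'' direction.} Conversely, suppose $p = Y^T Q Y$ for some $Q \succeq 0$. By the Cholesky-type decomposition of positive semidefinite matrices (recalled in the paragraph preceding the statement), write $Q = \sum_{j=1}^l a_j a_j^T$ for suitable vectors $a_j \in \R^{\binom{n+d}{d}}$; concretely one may take $Q = L L^T$ and let the $a_j$ be the columns of $L$. Then $Y^T Q Y = \sum_j Y^T a_j a_j^T Y = \sum_j (a_j^T Y)^2$. Setting $s_j(X) := a_j^T Y$, which is a polynomial of degree at most $d$, we obtain $p = \sum_j s_j(X)^2$, exhibiting $p$ as a sum of squares.

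\textbf{Main subtlety.} The only genuine point to be careful about is that the map $s \mapsto \hat s$ from polynomials of degree at most $d$ to coefficient vectors is a bijection, which is exactly the statement that the entries of $Y$ form a basis of the space of polynomials of degree at most $d$; this is what makes the two directions inverse to one another and guarantees that ``$p = Y^T Q Y$'' is an identity of polynomials rather than merely an identity of values. One should also note that the identity $p = Y^T Q Y$ forces $\deg p \le 2d$, consistent with the hypothesis $\deg p = 2d$ (and in the SOS direction, cancellation among the top-degree terms of the $s_j$ is possible but irrelevant to the argument). No convergence or genuinely hard estimate is involved; the content is purely the translation between the quadratic-form picture and the polynomial picture, together with the existence of a Cholesky factorization, which was already granted above.
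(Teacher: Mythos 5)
Your proof is correct and follows essentially the same route as the paper: the ``only if'' direction by assembling the Gram matrix $Q=\sum_j \hat s_j\hat s_j^T$ from the coefficient vectors of the squares, and the ``if'' direction by the Cholesky-type decomposition $Q=\sum_j a_ja_j^T$ yielding $p=\sum_j(a_j^TY)^2$. Your additional remark that the entries of $Y$ form a basis, so that $p=Y^TQY$ is an identity of polynomials, is a reasonable point of care that the paper leaves implicit.
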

With this observation the formulation  \eqref{eq:sos} can be directly transferred into the framework of semidefinite programming.  This first approximation can already yield good  bounds for the global optimum  $f^*$ and in particular in the constraint case, it can be developed further to a hierarchy of SDP--approximations: for a given optimization problem in the form \eqref{pop} which satisfies relatively general conditions on the feasible set $K$ one can construct a hierarchy of growing SDP--approximations whose optimal solutions converge towards the optimal solution of the initial problem. This approach gives a relatively general method to approximate and in some cases solve the initial problem. 
\end{trailer}
Even though the Moment-SOS formulation described above yields a computational viable way to approximate the optimal solution the dimension of the matrices in the resulting SDPs can grow fast with the problem size. This is why this approach is limited to small or medium size problems unless some specific characteristics are taken into account. The  focus of this chapter is on presenting ways to overcome this  bottleneck  in the case when additional symmetry is present in the problem.

\section{Using Representation theory in SDPs for sums-of-squares}\label{seq:rep}

Representation theory is a branch of mathematics that studies symmetries and their relation to algebraic structures. More concretely, it studies the ways in which groups can be represented as linear transformations of vector spaces  by representing the elements of the groups as invertible matrices. In this way it becomes possible to examine the structure of the group using the tools of linear algebra, making easier the study of structural algebraic  properties of the group. Representation theory has also proven to be a powerful tool to simplify computations, in the situation where a group action is assumed on a vector space of matrices. 
In this situation we can use representation theory to describe the set of  matrices stabilized by the action in a simplified way, reducing the sizes of the matrices as well as the dimension of the matrix spaces.
In turn, this allows to study other algebraic properties, such as positive semidefiniteness of the invariant matrices in a more efficient way. 
This computational aspect of representation theory gives rise to  practical applications in many fields, including physics, chemistry, computer science, and engineering. For example, it can be used to study the behavior of particles in quantum mechanics and the properties of molecules in chemistry and many more (see for example \cite{rep1,rep2,rep3,rep4,rep5}).
In this section, we outline the basic ideas of representation theory and its use in particular in semidefinite programming, where representation theory has successfully served as a key for many fascinating applications (see \cite{bachoc, vallentin} for other tutorials on the topic).
Our main focus here lies in the reduction of semi-definite optimization problems and sums of squares representations of invariant polynomials. We start by providing a basic introduction to representation theory. A comprehensive and approachable introduction to this topic can be found in Serre's book \cite{serre-b77}. Further readings on the subject can be found in \cite{stanley2011enumerative, stiefel2012group, fultonharris, james}. 
\subsection{Basic representation theory}\label{ssec:BasicRep}
We begin by introducing the central definitions of representation theory. 

\begin{definition}
Let $G$ be a group. 
\begin{enumerate}
\item A representation of $G$ is a pair $(V, \rho)$, where $V$ is a vector space over a field $\mathbb{K}$ and $\rho: G \rightarrow \operatorname{GL}(V)$ is a group homomorphism from $G$ to the group of invertible linear transformations of $V$. The degree of the representation is the dimension of the vector space $V$. 
\item Two representations $(V, \rho)$ and $(V', \rho')$ of the same group $G$ are considered equivalent, or isomorphic, if there exists an isomorphism $\phi: V \rightarrow V'$ such that $\rho'(g) = \phi \rho(g) \phi^{-1}$ for all $g \in G$.
\item Given a representation $(V, \rho)$, we can associate to it its character $\chi_V: G \rightarrow \mathbb{K}$, which is defined as \[\chi_V(g) = \Tr(\rho(g)).\] 
\end{enumerate}
\end{definition}
\begin{remark}
A character of a group  is a class function: it is constant on the conjugacy classes of $G$.
\end{remark}
If $V$ is a finite-dimensional vector space, we can identify the image of $G$ under $\rho$ as a matrix subgroup $M(G)$ of the invertible $n \times n$ matrices with coefficients in $\mathbb{K}$ by choosing a basis for $V$. We will denote the matrix corresponding to $g \in G$ as $M(g)$ and refer to the family $\{M(g) \mid g \in G\}$ as a matrix representation of $G$.

\begin{example}{Examples of representations}
To give a selection of examples, we consider the group $\S_n$ of permutations of a set $S = \{1, 2, \dots, n\}$. The group $\S_n$ has several representations:
\begin{enumerate}
    \item The \emph{trivial representation} $V = \mathbb{C}$ with $g(v) = v$ for all $g \in \S_n$ and $v \in \mathbb{C}$. This trivial representation can analogously be defined for all groups. 
    \item The \emph{natural representation} of $\S_n$ on $\mathbb{C}^n$. In this representation, $\S_n$ acts linearly on the $n$-dimensional space \[V = \mathbb{C}^n = \bigoplus_{i=1}^n \mathbb{C} e_i.\] For $g \in \S_n$, we define $g \cdot e_i = e_{g(i)}$ and extend this to a linear map on $V$. With respect to the basis $e_1,\ldots,e_n$ we obtain thus a matrix representation given by \emph{permutation matrices} which are defined as $M(g) = ( x_{ij} ) \in \C^{n \times n}$ with entries
\[x_{ij} = \delta_{g(i),j} = \begin{cases} 1, & \text{if } g(i)=j \\ 0, & \text{else.} \end{cases}\]
\item A representation of $\S_n$ on the polynomial ring $\mathbb{C}[X_1, \dots, X_n]$. In this representation,  we define  \[g \cdot X_i = X_{g(i)}\text{ for }g \in \S_n \text{ and } i \in {1, \dots, n},\] and extend it to a morphism of $\C$-algebras $\C[X_1,\cdots,X_n] \longrightarrow \C[X_1,\cdots,X_n]$. In this case we write $f^g$ for the image of a polynomial $f$ under the action of $\sigma$. This notation analogously applies to more general groups. 

    \item The regular representation, where $(\S_n,\circ)$ acts on the vector space \[\mathbb{C}{\S_n} = \bigoplus_{s \in \S_n} \mathbb{C} e_s\] with formal symbols $e_s$ ($s \in \S_n$) via $g(e_s) = e_{g \circ s}$.  Also, this regular representation can analogously be defined for  other finite  groups.
\end{enumerate}
\end{example}
\begin{trailer}{The notion of $G$-module}
Another way to define representations is to consider the linear action of a group $G$ on a vector space $V$ which gives $V$ the structure of a $G$-module. A $G$-module is an abelian group $M$ on which the action of $G$  respects the abelian group structure on M. In the case when $M$ is a vector space $V$, this just means that the group action is compatible with the operations on $V$. Given a vector space $V$ that is a $G$-module, we can define a map $\phi: G \rightarrow \operatorname{GL}(V)$ that sends an element $g \in G$ to the linear map $v \mapsto gv$ on $V$. This map $\phi$ is then a representation of $G$. Thus in the context of linear spaces, the notions of $G$-modules and representations of $G$ are equivalent, and it can be more convenient to use the language of $G$-modules. In this case, we call a linear map \[\phi: V \rightarrow W\] between two $G$-modules a \emph{$G$-homomorphism} if \[\phi(g(v)) = g(\phi(v))\text{ for all }g \in G\text{ and }v \in V.\] The set of all $G$-homomorphisms between $V$ and $W$ is denoted by $\operatorname{Hom}_G(V, W)$. Two $G$-modules are considered \emph{isomorphic (or equivalent)} as $G$-modules if there exists a $G$-isomorphism from $V$ to $W$.
\end{trailer}
Given a representation $V$, the set of all $G$--homomorphisms from $V$ to itself is called the \emph{endomorphism algebra} of $V$. It is denoted by $\operatorname{End}(V)$. If we have a matrix representation $M(G)$ of a group $G$, the endomorphism algebra corresponds to the \emph{commutant algebra} $\operatorname{Com}((M(G))$. This is the set of all matrices commuting with the group action, i.e., 
\[\operatorname{Com}(M(G)):=\{T \in \mathbb{C}^{n \times n} \text{ such that } TM(g) = M(g)T \text{ for all }g \in G\}.\] When studying the action of a group $G$ on a vector space $V$, it is important to consider subspaces that are closed under the action. Such a subspace is called a \emph{subrepresentation} or \emph{$G$-submodule} of $V$. If a representation $V$ has a proper submodule, it is called \emph{reducible}. If the only submodules of $V$ are $V$ and the zero subspace, the representation is called \emph{irreducible}.

\begin{example}{Decomposition of the natural representation of $\S_n$}
Consider the natural representation of $\S_n$ on the $n$-dimensional vector space $\C^n$. This representation is not irreducible. It has in fact two non-trivial subrepresentations, namely \[W_1 = \C \cdot (e_1+e_2+\ldots+e_n) \quad \text{ and } \quad W_2 = W_1^\perp\] the orthogonal complement of $W_1$ for the usual inner product. Clearly, $W_1$ is irreducible since it is one-dimensional. Furthermore, also $W_2$ is in fact  irreducible.
\end{example}
In the previous example, the vector space $V$ is an orthogonal sum of irreducible representations. This is generally the case when the order of the group is not divisible by the characteristic of the ground field. 
In this case,  a $G$-invariant inner product can be  defined starting from any inner product $\langle\cdot ,\cdot \rangle$ via
\[
\langle x,y\rangle_G=\frac{1}{|G]}\sum_{g\in G}\langle g(x) ,g(y)\rangle.
\] The existence of such a $G$--invariant scalar product is in fact the key to the following Theorem.
 
\begin{theorem}[Maschke's Theorem]\label{thm:Mashcke}
Let $G$ be a finite group and  consider a representation $(V,\phi)$ defined over a field with a characteristic which is prime to $|G|$. Then, for every subrepresentation $W$ we have that the orthogonal complement of $W$ with respect to $\langle\cdot, \cdot\rangle_G$ is also a subrepresentation. 
\end{theorem}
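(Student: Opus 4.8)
The plan is to exploit the $G$-invariant inner product $\langle\cdot,\cdot\rangle_G$ directly, so the only real content is to verify that the orthogonal complement $W^\perp$ (taken with respect to $\langle\cdot,\cdot\rangle_G$) is stable under the $G$-action. First I would record that $\langle\cdot,\cdot\rangle_G$ is indeed an inner product: it is visibly bilinear (an average of bilinear forms), symmetric, and positive definite, since for $x\neq 0$ each term $\langle \phi(g)(x),\phi(g)(x)\rangle$ is strictly positive (as $\phi(g)$ is invertible and $\langle\cdot,\cdot\rangle$ is positive definite), so the average is strictly positive; here the hypothesis that $\operatorname{char}\K$ is prime to $|G|$ is what guarantees that dividing by $|G|$ makes sense and that a sum of positive quantities cannot vanish. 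Consequently $V = W \oplus W^\perp$ as vector spaces.

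Next I would establish the key invariance property: $\langle\cdot,\cdot\rangle_G$ is $G$-invariant, i.e. $\langle \phi(h)(x),\phi(h)(y)\rangle_G = \langle x,y\rangle_G$ for all $h\in G$. This follows by the standard reindexing trick: expanding the left-hand side gives $\frac{1}{|G|}\sum_{g\in G}\langle \phi(g)\phi(h)(x),\phi(g)\phi(h)(y)\rangle = \frac{1}{|G|}\sum_{g\in G}\langle \phi(gh)(x),\phi(gh)(y)\rangle$, and as $g$ ranges over $G$ so does $gh$, so the sum is unchanged.

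With these two facts in hand, the conclusion is quick. Let $W$ be a subrepresentation and take $v\in W^\perp$; I must show $\phi(h)(v)\in W^\perp$ for every $h\in G$, i.e. $\langle \phi(h)(v), w\rangle_G = 0$ for all $w\in W$. Using $G$-invariance with $h^{-1}$, we get $\langle \phi(h)(v), w\rangle_G = \langle \phi(h^{-1})\phi(h)(v), \phi(h^{-1})(w)\rangle_G = \langle v, \phi(h^{-1})(w)\rangle_G$. Since $W$ is a subrepresentation, $\phi(h^{-1})(w)\in W$, and since $v\in W^\perp$ this pairing is $0$. Hence $\phi(h)(v)\in W^\perp$, so $W^\perp$ is a subrepresentation.

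The argument is essentially routine; the only point requiring care — and the place where the characteristic hypothesis is genuinely used — is the positive-definiteness (hence nondegeneracy) of $\langle\cdot,\cdot\rangle_G$, which is what makes $W^\perp$ a genuine complement rather than merely a stable subspace. Over a field of characteristic dividing $|G|$ the averaged form can degenerate and the statement fails, so I would make sure to flag explicitly where that assumption enters.
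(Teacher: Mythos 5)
Your argument is correct and is exactly the route the paper has in mind: the text states the theorem without proof, remarking only that the existence of the averaged $G$-invariant scalar product $\langle\cdot,\cdot\rangle_G$ is the key, and your reindexing argument plus the stability computation $\langle \phi(h)(v),w\rangle_G=\langle v,\phi(h^{-1})(w)\rangle_G=0$ is the standard way to cash that in. One small caution: your positivity discussion (and hence $V=W\oplus W^\perp$) only literally makes sense over $\R$ or $\C$, not over an arbitrary field of characteristic prime to $|G|$ as in the statement; fortunately the conclusion actually asked for — that $W^\perp$ is a subrepresentation — needs only the $G$-invariance of the form, which your argument establishes in full generality.
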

\begin{trailer}{Remarks on Maschke's Theorem}
 As a consequence of Maschke's Theorem, every reducible representation $V$ can be decomposed as an orthogonal sum of irreducible representations. Note that such a decomposition is not necessarily unique. Indeed, if we consider a  group $G$ which acts trivially on a $n$-dimensional vector space, every one-dimensional subspace is an irreducible subrepresentation. Furthermore, the assumption that the group is finite is not necessary and  a $G$-invariant inner product also exists in the case of compact  groups. So,  a version of Maschke's theorem  also holds, for example, for compact groups, like $O(n)$. 
\end{trailer}

A very central statement in representation theory, which is, in particular, the key to the symmetry reductions in the further sections, is the following statement which goes back to Schur. 
\begin{theorem}[Schur's Lemma]\label{thm:schur}
Let $V$ and $W$ be two irreducible $G$-modules  of a group $G$.  Every $G$-endomorphism from $V$ to $W$ is either zero or invertible. Furthermore, $\End(V)$ is a skew field over~$\K$. In particular, 
if the ground field $\K$ is algebraically closed, every $G$ invariant endomorphism between $V$ and $V$ is a scalar multiple of the identity. 
\end{theorem}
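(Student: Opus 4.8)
The plan is to prove the three assertions in turn, each reducing to a statement about kernels and images of $G$-homomorphisms. First I would take an arbitrary $G$-homomorphism $\phi\colon V\to W$ between irreducible $G$-modules and observe that its kernel $\ker\phi\subseteq V$ and its image $\operatorname{im}\phi\subseteq W$ are $G$-submodules: indeed, if $v\in\ker\phi$ then $\phi(g(v))=g(\phi(v))=g(0)=0$, so $g(v)\in\ker\phi$; and similarly $g(\phi(v))=\phi(g(v))\in\operatorname{im}\phi$, so the image is $G$-stable. Now invoke irreducibility of $V$: $\ker\phi$ is either $\{0\}$ or all of $V$. If $\ker\phi=V$ then $\phi=0$. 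Otherwise $\ker\phi=\{0\}$, so $\phi$ is injective; then $\operatorname{im}\phi$ is a nonzero $G$-submodule of the irreducible module $W$, hence $\operatorname{im}\phi=W$, so $\phi$ is surjective, and therefore $\phi$ is an isomorphism. This establishes the first assertion.

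For the second assertion, apply the first with $W=V$: every nonzero element of $\operatorname{End}(V)=\operatorname{Hom}_G(V,V)$ is an invertible $G$-endomorphism, and its inverse is again a $G$-endomorphism (if $\phi$ is a $G$-isomorphism then $\phi^{-1}(g(w))=\phi^{-1}(g(\phi(\phi^{-1}(w))))=\phi^{-1}(\phi(g(\phi^{-1}(w))))=g(\phi^{-1}(w))$). Since $\operatorname{End}(V)$ is in any case an associative $\K$-algebra with unit $\operatorname{Id}$, and every nonzero element is a unit, it is a division ring over $\K$, i.e.\ a skew field.

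For the final assertion, assume $\K$ is algebraically closed and let $\phi\in\operatorname{End}(V)$ be a $G$-endomorphism. Since $V$ is finite-dimensional (so that $\phi$ has a characteristic polynomial) and $\K$ is algebraically closed, $\phi$ has an eigenvalue $\lambda\in\K$. Then $\phi-\lambda\operatorname{Id}$ is again a $G$-endomorphism of $V$, and it is not invertible because $\lambda$ is an eigenvalue, so its kernel is nonzero. By the first assertion applied to $\phi-\lambda\operatorname{Id}$, the only remaining possibility is $\phi-\lambda\operatorname{Id}=0$, that is, $\phi=\lambda\operatorname{Id}$.

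The argument is essentially formal once the kernel/image observation is in place, so there is no serious obstacle; the one point that deserves care is the last step, where algebraic closedness of $\K$ is genuinely used to guarantee the existence of an eigenvalue, and finite-dimensionality of $V$ is implicitly needed for the characteristic polynomial to exist — this is the hinge of the whole statement and should be stated explicitly rather than glossed over.
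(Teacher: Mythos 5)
Your proof is correct and is the canonical argument: kernel and image are submodules, irreducibility forces zero or isomorphism, hence $\End(V)$ is a division ring, and over an algebraically closed field the eigenvalue argument yields scalar multiples of the identity; the paper itself states Schur's Lemma without proof (deferring to standard references such as Serre), and your argument matches that standard treatment, including the appropriate caveat that finite-dimensionality of $V$ is needed for the eigenvalue step.
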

\begin{corollary}\label{cor:Schur}
Let $V$ be a complex irreducible $G$-module  and $\langle\cdot,\cdot\rangle_G$ be an
invariant Hermitian form on $V$. Then $\langle\cdot,\cdot\rangle_G$ is unique up to a real scalar multiple.
\end{corollary}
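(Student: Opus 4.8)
The plan is to deduce the statement from Schur's Lemma (Theorem~\ref{thm:schur}) by turning the ``ratio'' of two invariant Hermitian forms into a $G$-endomorphism of $V$. Throughout I assume the Hermitian forms under consideration are nonzero (the zero form being a trivial degenerate case, a $0$-multiple of anything) and that $V$ is finite-dimensional, as is the case for the irreducible modules relevant here.

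First I would observe that every nonzero invariant Hermitian form $h$ on $V$ is nondegenerate. Its radical $\{x\in V : h(x,y)=0 \text{ for all } y\in V\}$ is a linear subspace, and it is $G$-stable because $h$ is $G$-invariant: if $x$ lies in the radical then $h(g(x),y)=h(x,g^{-1}(y))=0$ for every $y$. Since $V$ is irreducible, this subspace is either $0$ or all of $V$; the latter would force $h=0$, so the radical is $0$. This is the one place where irreducibility is used beyond its role in Schur's Lemma.

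Next, given two nonzero invariant Hermitian forms $h_1$ and $h_2$, I would use the nondegeneracy of $h_1$ to define a linear operator $T\colon V\to V$ by the requirement $h_1(T(x),y)=h_2(x,y)$ for all $x,y\in V$: for fixed $x$, both sides are conjugate-linear functionals of $y$, so by nondegeneracy of $h_1$ there is a unique vector $T(x)$ representing $h_2(x,\cdot)$ through $h_1$, and linearity of $x\mapsto T(x)$ again follows from nondegeneracy of $h_1$. The key computation is that $T$ is a $G$-endomorphism: for all $x,y\in V$ and $g\in G$,
\[
h_1\big(T(g(x)),y\big)=h_2\big(g(x),y\big)=h_2\big(x,g^{-1}(y)\big)=h_1\big(T(x),g^{-1}(y)\big)=h_1\big(g(T(x)),y\big),
\]
using invariance of $h_2$ and then of $h_1$, so nondegeneracy of $h_1$ yields $T(g(x))=g(T(x))$. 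Since $V$ is a complex, hence algebraically-closed-field, irreducible module, Schur's Lemma gives $T=\lambda\,\Id$ for some $\lambda\in\C$, whence $h_2=\lambda h_1$.

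Finally I would check that $\lambda$ is real. Since $h_1$ is nonzero and Hermitian, polarization gives $h_1(x,x)\neq 0$ for some $x$; for such an $x$ both $h_1(x,x)$ and $h_2(x,x)=\lambda h_1(x,x)$ are real, forcing $\lambda\in\R$. Conversely, any real multiple of an invariant Hermitian form is again an invariant Hermitian form, which yields the stated uniqueness up to a real scalar. The only mildly delicate points are the nondegeneracy step above and the conjugate-linear bookkeeping in the construction of $T$; the rest is a direct application of Theorem~\ref{thm:schur}.
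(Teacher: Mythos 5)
Your proof is correct, and it is exactly the standard deduction the paper has in mind: the corollary is stated there without proof, as an immediate consequence of Schur's Lemma (Theorem~\ref{thm:schur}). Your chain of steps --- nondegeneracy of a nonzero invariant form via irreducibility, the operator $T$ defined by $h_1(T(x),y)=h_2(x,y)$ being a $G$-endomorphism, Schur's Lemma over $\C$ giving $T=\lambda\,\Id$, and Hermitian symmetry forcing $\lambda\in\R$ --- fills in that omitted argument faithfully, with the conjugate-linear bookkeeping handled correctly.
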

One can define an inner product on the set of complex valued functions on a finite group $G$ by \[\langle \phi, \psi \rangle = \frac{1}{|G|} \sum_{g \in G} \phi(g) \overline{\psi(g)}. \]
Using Schur's Lemma, one can show the following important property of irreducible characters.
\begin{theorem}
Let $G$ be a finite group and $\K$ be algebraically closed. Then, for every pair of characters $(\chi_i,\chi_j)$ corresponding to a pair of non-isomorphic irreducible representations  we have  \[\langle \chi_i,\chi_j\rangle =0.\] Moreover,  the set of irreducible characters $\{\chi_i,\, i\in I\}$ form an orthonormal basis for the $\K$-vector space of  class functions of $G$.\end{theorem}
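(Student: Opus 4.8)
The plan is to deduce everything from Schur's Lemma (Theorem~\ref{thm:schur}) via the classical averaging trick, working over $\K=\C$ (or, more generally, a field equipped with a conjugation for which $\frac{1}{|G|}$-averaging produces an invariant positive-definite Hermitian form — this is the setting in which the inner product above makes sense). First I would establish the orthogonality of matrix coefficients. Fix matrix realizations $M_i\colon G\to\GL_{d_i}(\K)$ of the (finitely many) irreducible modules $V_i$, chosen unitary with respect to an invariant Hermitian form, which exists by averaging exactly as in the proof of Maschke's Theorem (Theorem~\ref{thm:Mashcke}). For any linear map $f$ from the space of $V_i$ to that of $V_j$, the averaged map $f^0=\frac{1}{|G|}\sum_{g\in G}M_j(g)\,f\,M_i(g)^{-1}$ is a $G$-homomorphism, hence by Schur's Lemma $f^0=0$ when $V_i\not\cong V_j$, and $f^0=\tfrac{\Tr f}{d_i}\Id$ when $i=j$. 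Letting $f$ run over the elementary matrices and reading off entries, and using that unitarity gives $(M_i(g)^{-1})_{qp}=\overline{(M_i(g))_{pq}}$, yields
\[
\frac{1}{|G|}\sum_{g\in G}(M_j(g))_{k\ell}\,\overline{(M_i(g))_{pq}}=\begin{cases}\dfrac{1}{d_i}\,\delta_{kp}\,\delta_{\ell q}&\text{if }i=j,\\[1ex]0&\text{if }i\neq j.\end{cases}
\]

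Second, specializing $\ell=k$ and $q=p$ and summing over $k$ and over $p$ turns the left-hand side into $\frac{1}{|G|}\sum_{g}\chi_j(g)\overline{\chi_i(g)}=\langle\chi_j,\chi_i\rangle$, while the right-hand side becomes $0$ for $i\neq j$ and $\frac{1}{d_i}\cdot d_i=1$ for $i=j$. This proves the first assertion and shows the $\chi_i$ form an orthonormal system; in particular they are linearly independent in the space of class functions.

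Third, for the "moreover" part it remains to show that the $\chi_i$ span the space of class functions, which I would do by showing the orthogonal complement of their span is trivial. Let $\phi$ be a class function with $\langle\phi,\chi_i\rangle=0$ for all $i$, and for each $i$ set $T_i:=\sum_{g\in G}\overline{\phi(g)}\,M_i(g)$. Since $\phi$ is constant on conjugacy classes, the substitution $g\mapsto hgh^{-1}$ gives $M_i(h)T_iM_i(h)^{-1}=T_i$, so $T_i\in\End(V_i)$ and by Schur's Lemma $T_i=\lambda_i\Id$; taking traces, $\lambda_i d_i=\sum_g\overline{\phi(g)}\chi_i(g)=|G|\,\overline{\langle\phi,\chi_i\rangle}=0$, hence $T_i=0$ for every $i$. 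Since every representation decomposes into irreducibles by Maschke's Theorem, the operator $\sum_g\overline{\phi(g)}\rho(g)$ vanishes on every $G$-module, in particular on the regular representation $\K G$; evaluating it on the basis vector indexed by the identity element yields $\sum_g\overline{\phi(g)}e_g=0$, so $\phi\equiv 0$. Thus the irreducible characters span, and combined with the orthonormality from the second step they form an orthonormal basis of the class functions.

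The main obstacle is the first step: getting the matrix-coefficient orthogonality relations in exactly the right form with the complex conjugation, which forces one to pass to a unitary (invariant-Hermitian) realization of each irreducible and to keep careful track of which indices are summed. Once that is in place, the remaining steps are routine bookkeeping with traces together with two invocations of Schur's Lemma and one of Maschke's Theorem. A secondary point to handle cleanly is the hypothesis on $\K$: the averaging arguments need $|G|$ invertible and the statement as phrased needs a conjugation, so the cleanest route is to prove it for $\K=\C$ and remark that it transports to any algebraically closed field of characteristic zero.
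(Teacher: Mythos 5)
Your proof is correct and follows exactly the route the paper indicates: the statement is given there without proof, as a consequence of Schur's Lemma (Theorem~\ref{thm:schur}), and your argument --- orthogonality of matrix coefficients via the averaging trick and Schur's Lemma, then completeness by showing that a class function orthogonal to all irreducible characters induces the zero operator on every representation, in particular on the regular one --- is precisely the standard proof from Serre's book which the paper cites. Your concluding remark on the hypothesis on $\K$ (prove it over $\C$, where the Hermitian inner product and the $\frac{1}{|G|}$-averaging make sense, and transport to algebraically closed fields of characteristic zero) is an appropriate way to reconcile the paper's phrasing with the conjugation appearing in the inner product.
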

Recall that a \emph{class function} on $G$ is a function $G\rightarrow \K$ which is constant on the different conjugacy classes of $G$. Therefore, the following is a direct consequence. 
\begin{corollary} Assume that $G$ is finite. Then there exists a finite number of non-equivalent irreducible representations of $G$. If $\K$ is algebraically closed, this  number is equal to the number of conjugacy classes of $G$.
\end{corollary}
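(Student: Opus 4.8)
The statement to prove is the corollary asserting two facts: first, that a finite group $G$ has only finitely many non-equivalent irreducible representations, and second, that when $\K$ is algebraically closed this number equals the number of conjugacy classes of $G$. The plan is to read both assertions off the immediately preceding theorem, which states that the irreducible characters $\{\chi_i : i \in I\}$ form an orthonormal basis for the $\K$-vector space of class functions on $G$, together with the remark that a character is determined by (indeed, is) a class function.

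First I would address finiteness. A finite group $G$ has finitely many conjugacy classes, say $k$ of them, since the conjugacy classes partition the finite set $G$. Hence the $\K$-vector space of class functions on $G$ has dimension exactly $k$: a class function is uniquely specified by its $k$ values on the classes, so the indicator functions of the conjugacy classes form a basis. Since distinct irreducible representations have distinct characters (their characters are orthonormal by the preceding theorem, hence in particular pairwise distinct, hence linearly independent), the number of non-equivalent irreducibles is at most $\dim$ of the space of class functions, which is $k < \infty$. This already gives finiteness, and it does not use algebraic closedness beyond what the cited theorems assume.

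Next, assuming $\K$ algebraically closed, I would upgrade the bound to an equality. By the preceding theorem, under this hypothesis the irreducible characters not only are orthonormal but actually \emph{span} the space of class functions. An orthonormal spanning set is a basis, so its cardinality equals the dimension of the space, namely $k$, the number of conjugacy classes. Since non-equivalent irreducibles correspond bijectively to distinct irreducible characters (equivalence of representations is detected by equality of characters, a consequence of Schur's Lemma via the orthogonality relations), we conclude that the number of non-equivalent irreducible representations of $G$ equals the number of conjugacy classes.

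The only genuinely delicate point, and the one I expect to require the most care, is the claim that non-equivalent irreducible representations have distinct characters — i.e., that the character determines the representation up to isomorphism. Over an algebraically closed field this follows from the orthonormality relations in the preceding theorem (two irreducibles with the same character would have $\langle \chi, \chi\rangle$ forcing them to coincide), so in the context of this excerpt it is already available; I would simply invoke it. Everything else is linear algebra — counting conjugacy classes, identifying the dimension of the space of class functions, and using that an orthonormal family is linearly independent and a spanning orthonormal family is a basis — and requires no further input beyond the stated results.
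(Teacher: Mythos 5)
Your argument is correct and is exactly the reasoning the paper intends: the corollary is stated as a direct consequence of the preceding orthonormal-basis theorem, combined with the observation that class functions form a $\K$-vector space whose dimension is the number of conjugacy classes, so linear independence of the irreducible characters gives finiteness and the basis property gives the equality when $\K$ is algebraically closed. No gap worth flagging, beyond noting (as the paper itself glosses over) that the finiteness claim for non-algebraically-closed $\K$ is also being read off a theorem stated under the algebraic-closedness hypothesis.
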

Motivated by the notion of irreducible representations is the following notion of the \emph{isotypic decomposition} of a representation. Here, isotypic means that we aim to decompose a representation $V$ as a direct sum of subrepresentaions, where each summand is the direct sum of equivalent irreducible subrepresentations.
Specifically, for a finite group $G$, suppose that $\mathcal{I}=\{W_1, W_2, \dots, W_k\}$ are all the isomorphism classes of irreducible representations of $G$. Then the isotypic decomposition of $V$ is a decomposition \[V = \bigoplus_{i=1}^k\bigoplus_{j=1}^{m_i} V_{ij},\] where each $V_{ij}$ is a subrepresentation of $V$ that is equivalent to the irreducible representation $W_i$. In other words, for each $i$, there exist   $G$-isomorphisms \[\phi_{i,j}: W_i \rightarrow V_{ij} \/, \text{ for all }1\leq j\leq m_i\] The subspace \[V_i = \bigoplus_{j=1}^{m_i} V_{ij}\] is called the isotypic component associated of type $W_i$. Since there is a bijection between the set of characters and the set of isomorphism classes of representations, we can also index the isotypic components of a representation by the corresponding irreducible character. We then denote by $V^{\chi}$ the isotypic component of $V$ associated to the irreducible representation of character $\chi$. The resulting   isotypic decomposition of $V$ \[V=\bigoplus_{i=1}^k V_i = \bigoplus_{\chi} V^\chi\] is unique up to the ordering of the direct sum.
\begin{backgroundinformation}{Computing an isotypic decomposition}
Using the characters of the irreducible representations, it is possible to calculate the isotypic decomposition as images of the projections \begin{equation}\label{eq:1}\abb{\pi_i}{V}{V}{x}{\frac{\dim W_i}{\mid G \mid} \sum_{g \in G} \overline{\chi_i(g)} g \cdot x}\end{equation} 
where $\chi_i$ is the character associated to the irreducible representation $W_i$.
\end{backgroundinformation}
Combining the isotypic decomposition with Schur's Lemma, we obtain the following for representations defined over $\C$.
\begin{corollary}\label{cor:decomp}
Let $V:=m_1W_1\oplus m_2 W_2\oplus\ldots\oplus m_k W_k$ be a complete decomposition of a representation $V$ over $\C$ such that $\dim W_i=d_i$. Then we have: 
\begin{enumerate}
\item $\dim V=m_1d_1+\ldots m_kd_k$, 
\item $\End V\simeq \bigoplus_{i=1}^k\C^{m_i\times m_i}.$ 
\item Let $\chi$ be the character of $V$ and $\chi_i$ the character of $W_i$ then we have
$\langle \chi,\chi_i\rangle=m_i$.
\item There is a basis of $V$ such that 
	\begin{enumerate}
	\item The matrices of the corresponding matrix group $M(G)$ are of the form $$M(g)=\bigoplus_{l=1}^{k}\bigoplus_{j=1}^{m_i}M^{(l)}(g),$$ where $M^{(l)}(G)$ is a matrix representation of $G$ corresponding to $W_l$.
	\item The corresponding commutant algebra is of the form \[\Com M(G)\simeq\bigoplus_{l=1}^k(N_{i}\otimes I_{d_l}),\] where $N_l\in \C^{m_l\times m_l}$ and $I_{d_l}$ denotes the identity in $\C^{d_l \times d_l}$.
	\end{enumerate}
\end{enumerate}
\end{corollary}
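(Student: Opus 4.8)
## Proof proposal for Corollary~\ref{cor:decomp}

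\textbf{Overall strategy.} The plan is to derive all four items as formal consequences of Schur's Lemma (Theorem~\ref{thm:schur}) and the isotypic decomposition machinery already developed. The key structural fact to exploit is that over $\C$ (algebraically closed) a $G$-homomorphism between two irreducible modules is either $0$ or an isomorphism, and a $G$-endomorphism of an irreducible module is a scalar. Everything else is bookkeeping on direct sums.

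\textbf{Items (1) and (3).} Item (1) is immediate: taking dimensions in $V \simeq \bigoplus_i m_i W_i$ and using $\dim W_i = d_i$. For item (3), I would use that the character of a direct sum is the sum of characters, so $\chi = \sum_i m_i \chi_i$, and then invoke the orthonormality of irreducible characters stated in the Theorem preceding this corollary: $\langle \chi, \chi_i \rangle = \sum_j m_j \langle \chi_j, \chi_i \rangle = m_i$. These two are one-liners and do not constitute the heart of the matter.

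\textbf{Item (2), the endomorphism algebra.} This is where the real content sits. First I would reduce to computing $\Hom_G(W_i, W_j)$ for irreducibles: since $\End V = \Hom_G\big(\bigoplus_i m_i W_i, \bigoplus_j m_j W_j\big)$ and $\Hom_G$ distributes over finite direct sums in each argument, we get $\End V \simeq \bigoplus_{i,j} \Hom_G(W_i, W_j)^{\,m_i \times m_j}$ (an $m_i \times m_j$ block of copies of the space $\Hom_G(W_i,W_j)$). Schur's Lemma then gives $\Hom_G(W_i, W_j) = 0$ for $i \neq j$, and $\Hom_G(W_i, W_i) = \End(W_i) = \C \cdot \Id$ because $\C$ is algebraically closed. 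Hence only the diagonal blocks $i = j$ survive and each contributes an $m_i \times m_i$ matrix over $\C$, giving $\End V \simeq \bigoplus_{i=1}^k \C^{m_i \times m_i}$. The subtlety I would be careful about is that this isomorphism is one of $\C$-algebras, not merely of vector spaces: one must check that composition of $G$-endomorphisms corresponds to block-matrix multiplication, which follows once a fixed family of isomorphisms $\phi_{i,j}: W_i \to V_{ij}$ (as in the isotypic decomposition) is chosen to identify all copies of $W_i$ with one another.

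\textbf{Item (4), the adapted basis.} For (4a), I would choose inside each isotypic component $V_i = \bigoplus_{j=1}^{m_i} V_{ij}$ a basis of $V_{i1}$, then transport it to each $V_{ij}$ via the $G$-isomorphisms $\phi_{i,j} \circ \phi_{i,1}^{-1}$; concatenating over all $i$ and $j$ yields a basis of $V$ in which $M(g)$ is block-diagonal with each block equal to a fixed matrix realization $M^{(i)}(g)$ of $W_i$, repeated $m_i$ times — this is precisely the claimed form (modulo the index typo $l$ vs.\ $i$ in the statement). For (4b), I would then translate item (2) through this basis: a matrix $T$ commuting with all $M(g)$ must, by the block structure and Schur, vanish on the off-diagonal isotypic blocks, and on the $W_i$-isotypic block it must intertwine $m_i$ identical copies of $M^{(i)}(g)$; writing out the intertwining condition shows such a $T$ is exactly $N_i \otimes I_{d_i}$ for an arbitrary $N_i \in \C^{m_i \times m_i}$, whence $\Com M(G) \simeq \bigoplus_i (N_i \otimes I_{d_i})$.

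\textbf{Expected main obstacle.} The only genuinely delicate point is making the identifications in (2) and (4b) canonical enough that the correspondence is an algebra isomorphism and that the tensor-product description of the commutant is literally correct (including getting the factors in $N_i \otimes I_{d_i}$ in the right order relative to how the basis of $V_{i1}$ is tensored against the multiplicity index). This is not deep, but it is the step where a careless argument would only prove a vector-space isomorphism rather than the full structural statement. Everything else is a direct unwinding of Schur's Lemma over an algebraically closed field.
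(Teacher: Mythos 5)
Your proposal is correct, and it follows exactly the route the paper intends: the paper states this corollary without a detailed proof, presenting it as the combination of the isotypic decomposition with Schur's Lemma over $\C$, which is precisely how you derive items (2) and (4), with (1) and (3) as the immediate bookkeeping steps you describe. Your attention to the algebra-isomorphism (rather than mere vector-space) identification in (2) and to the ordering in $N_i\otimes I_{d_i}$ in (4b) addresses the only genuinely delicate points, so nothing is missing.
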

A basis for $V$ as in the corollary above is called \emph{symmetry adapted} basis. Given a matrix representation $X$ of a representation $V$ of a group $G$, Corollary \ref{cor:decomp} amounts to construct a basis of $V$ such that the corresponding matrix representation has a particularly simple form. Specifically, we can decompose $V$ into a direct sum of subrepresentations $V_{l,\beta}$ that are isomorphic to an irreducible representation $W_l$ of $G$, and the matrices of the representation will be block diagonal with blocks corresponding to these subrepresentations. 
\begin{backgroundinformation}{How to construct a symmetry adapted basis}
To construct such a  basis, we consider a matrix representation $Y^l$ corresponding to an irreducible representation $W_l$ of dimension $d_l$ and define the map \[\pi_{\alpha,\beta}: V \rightarrow V \text{ for each } \alpha,\beta = 1,\ldots, d_l,\]  as 
\[\pi_{\alpha,\beta} = \frac{m_l}{|G|}\sum_{g\in G}Y^l_{\beta,\alpha}(g^{-1})M(g).\]
Here, $m_l$ is the number of copies of $W_l$ that appear in the decomposition of $V$. It can be shown (see \cite[Section 2.6]{serre-b77}) that the map $\pi_{1,1}$ is a projection from $V$ onto a subspace $V_{l,1}$ isomorphic to $W_l$, and $\pi_{1,\beta}$ maps $V_{l,1}$ onto $V_{l,\beta}$, which is another subrepresentation of $V$ isomorphic to $W_l$. Thus we arrive at the announced decomposition and can use the maps to construct a symmetry-adapted basis.
\end{backgroundinformation}
A representation that admits a very beautiful isotypic decomposition is the \emph{regular  representation} of a finite group $G$. Recall that this is defined  on  the vector space \[V^{reg} = \bigoplus_{g \in G} \C e_g,\] via  $\rho(g)(e_h) = e_{g\cdot h}$ for every $g,h \in G$. 

\begin{theorem}\label{them:regular}
Let $G$ be a finite group and $(V,\rho)$  isomorphic to the regular representation of $G$.
Then,\[V \simeq \bigoplus_{W \in \mathcal{I}} (\dim W) W,\] and in particular, we have \[|G| = \dim V = \sum_ {W \in \mathcal{I}} (\dim W)^2.\]
\end{theorem}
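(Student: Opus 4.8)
The plan is to apply the isotypic decomposition together with Schur's Lemma directly to the regular representation $V^{reg}$. First I would set up the character-theoretic computation: by Corollary~\ref{cor:decomp}(3), the multiplicity $m_W$ of an irreducible representation $W$ in $V^{reg}$ equals $\langle \chi^{reg}, \chi_W\rangle$, where $\chi^{reg}$ is the character of the regular representation. So the crux is to compute $\chi^{reg}$ explicitly. Using the basis $\{e_h : h\in G\}$, the matrix of $\rho(g)$ is the permutation matrix of the left-translation $h\mapsto g\cdot h$; this permutation has a fixed point if and only if $g\cdot h = h$ for some $h$, i.e.\ if and only if $g = \mathrm{id}$. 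Hence $\chi^{reg}(\mathrm{id}) = |G|$ and $\chi^{reg}(g) = 0$ for all $g\neq \mathrm{id}$.

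Next I would plug this into the inner product formula $\langle \phi,\psi\rangle = \frac{1}{|G|}\sum_{g\in G}\phi(g)\overline{\psi(g)}$. Since $\chi^{reg}$ is supported only at the identity, the sum collapses to a single term:
\[
m_W \;=\; \langle \chi^{reg}, \chi_W\rangle \;=\; \frac{1}{|G|}\,\chi^{reg}(\mathrm{id})\,\overline{\chi_W(\mathrm{id})} \;=\; \frac{1}{|G|}\cdot |G| \cdot \overline{\dim W} \;=\; \dim W,
\]
using that $\chi_W(\mathrm{id}) = \Tr(\rho_W(\mathrm{id})) = \dim W$. This gives the claimed decomposition $V^{reg} \simeq \bigoplus_{W\in\mathcal{I}} (\dim W)\,W$.

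Finally, for the dimension count I would simply take dimensions on both sides of the isomorphism: $\dim V^{reg} = |G|$ by construction, while the right-hand side has dimension $\sum_{W\in\mathcal{I}} m_W \dim W = \sum_{W\in\mathcal{I}} (\dim W)^2$ by Corollary~\ref{cor:decomp}(1). Equating the two yields $|G| = \sum_{W\in\mathcal{I}} (\dim W)^2$.

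I do not expect a serious obstacle here: the only subtlety is justifying that the multiplicity formula of Corollary~\ref{cor:decomp} applies, which requires the ground field to be $\C$ (or algebraically closed of characteristic zero) so that Maschke's Theorem and the orthogonality relations are available — this is already in force in the statement. One should also be mildly careful that $\mathcal{I}$ is finite, which was established in the corollary following the orthonormality theorem. Everything else is a direct substitution, so the "hard part" is really just recording the observation that left-translation by a non-identity element is a fixed-point-free permutation.
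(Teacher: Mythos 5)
Your argument is correct, and it is the classical character-theoretic proof: the character of the regular representation is supported at the identity (left translation by $g\neq \mathrm{id}$ is fixed-point-free, so its permutation matrix has trace zero), and then Corollary~\ref{cor:decomp}(3) collapses the multiplicity computation to $m_W=\frac{1}{|G|}\,|G|\,\dim W=\dim W$, with the dimension count giving $|G|=\sum_{W\in\mathcal{I}}(\dim W)^2$. The paper states this theorem without proof, so there is nothing to contrast with; your write-up uses exactly the tools the paper has set up (the inner product on class functions and the multiplicity formula over $\C$), and your caveats about the ground field and finiteness of $\mathcal{I}$ are the right ones.
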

\begin{example}{Cyclic permutation matrices and the associated commutant}
Consider the cyclic group $C_4$ and let $g$ be a generating element of this group, i.e., $C_4=\{g^0,g^1,g^2,g^3\}$. The regular representation of this group is of dimension 4.
It can be defined as a matrix representation via the \emph{cyclic permutation matrices} given via
\[
g\mapsto M^{reg}(g)=\begin{pmatrix}
          0&1&0&0\\
          0&0&1&0\\
          0&0&0&1\\
          1&0&0&0
          \end{pmatrix}.
\]
Furthermore, $C_4$ has 4 pairwisely non-isomorphic irreducible representations, each of which is one-dimensional and we get these representations via  \[\begin{array}{cccc}\rho_j:& G & \longrightarrow & \C \\ & g & \longmapsto & e^{\frac{2\pi  \mathrm{i}}{4} j}\end{array}\] for $0 \leqslant i \leqslant 3$.
With the projection defined above in \eqref{eq:1} we obtain that the symmetry adapted basis \[B:=\left\{(1,1,1,1),(1,\mathrm{i},-1,-\mathrm{i}),(1,-1,1,-1),(1,-\mathrm{i},-1,\mathrm{i})\right\}.\] With respect to this basis, the representation $V^{reg}$ is given via the diagonal matrix 
\[g\mapsto \tilde{X}^{reg}(g)=\begin{pmatrix} 1&\phantom{-}0&\phantom{-}0&\phantom{-}0\\0&\phantom{-}\mathrm{i}&\phantom{-}0&\phantom{-}0\\0&\phantom{-}0&-1&\phantom{-}0\\0&\phantom{-}0&\phantom{-}0&-\mathrm{i}\end{pmatrix}\]
Now, consider a \emph{circulant matrix}, i.e., a matrix of the from \[T:=  \left( \begin {array}{cccc} \alpha&\beta&\gamma&\delta\\ \noalign{\medskip}\beta&\gamma&\delta&\alpha
\\ \noalign{\medskip}\gamma&\delta&\alpha&\beta\\ \noalign{\medskip}\delta&\alpha&\beta&\gamma\end {array}
 \right).
\]
Clearly, this matrix commutes with the matrix representation $X^{reg}$ and doing the change of basis to the symmetry adapted basis we obtain

\[\tilde{T}= \left( \begin {array}{cccc} \alpha+\beta+\gamma+\delta&0&0&0\\ \noalign{\medskip}0&\alpha+\mathrm{i}\beta-
\gamma-\mathrm{i}\delta&0&0\\ \noalign{\medskip}0&0&\alpha-\beta+\gamma-\delta&0\\ \noalign{\medskip}0&0&0&\alpha
-\mathrm{i}\beta-\gamma+\mathrm{i}\delta\end {array} \right) 
\]
More generally, for the cyclic group $C_n$  we see that this regular representation will contain $n$ irreducible representations $\rho_0,\ldots, \rho_{n-1}$, all of which are 1-dimensional and given by
\[
\rho_j:\,g\mapsto\, e^{\frac{2\pi \mathrm{i}}{n}j},
\]
and  corresponding symmetric adapted bases is also known as the \emph{Fourier-basis}.
\end{example}                                                                                                                                                                                                                                                                                                                                                                                                                                                                                                                                                                                                                                                                                                                                                                                                                                                                                                                                                                                                                                                                                                                                                                                                                                                                                                                                                                                                                                                                                                                                                                                                                                                                                                                                                                                                                                                                                                                                                                                                                                                                                                                                                                                                                                                                                                                                                                                                                                                                                                                                                                                                                                                                                                                                                                                                                                                                                                                                                                                                                                                                                                                                                                                                                                                                                                                                                                                                                                                                                                                                                                                                                                                                                                                                                                                                                                                                                                                                                                                                                                                                                                                                                                                                                                                                                                                                                                                                                                                                                                                                                                                                                                                                                                                                                                                                                                                                                                                                                                                                                                                                                                                                                                                                                                                                                                                                                                                                                                                                                                                                                                                                                                                                                                                                                                                                                                                                                                                                                                                                                                                                                                                                                                                                                                                                                                                                                                                                                                                                                                                                                                                                                                                                                                                                                                                                                                                                                                                                                                                                                                                                                                                                                                                                                                                                                                                                                                                                                                                                                                                                                                                                                                                                                                                                                                                                                                                                                                                                                                                                                                                                                                                                                                                                                                                                                                                                                                                                                                                                                                                                                                                                                                                                                                                                                                                                                                                                                                                                                                                                                                                                                                                                                                                                                                                                                                                                                                                                                                                                                                                                                                                                                                                                                                                                                                                                                                                                                                                                                                                                                                                                                                                                                                                                                                                                                                                                                                                                                                                                                                                                                                                                                                                                                                                                                                                                                                                                                                                                                                                                                                                                                                                                                                                                                                                                                                                                                                                                                                                                                                                                                                                                                                                                                                                                                                                                                                                                                                                                                                                                                                                                                                                                                                                                                                                                                                                                                                                                                                                                                                                                                                                                                                                                                                                                                                                                                                                                                                                                                                                                                                                                                                                                                                                                                                                                                                                                                                                                                                                                                                                                                                                                                                                                                                                                                                                                                                                                                                                                                                                                                                                                                                                                                                                                                                                                                                                                                                                                                                                                                                                                                                                                                                                                                                                                                                                                                                                                                                                                                                                                                                                                                                                                                                                                                                                                                                                                                                                                                                                                                                                                                                                                                                                                                                                                                                                                                                                                                                                                                                                                                                                                                                                                                                                                                                                                                                                                                                                                                                                                                                                                                                                                                                                                                                                                                                                                                                                                                                                                                                                                                                                                                                                                                                                                                                                                                                                                                                                                                                                                                                                                                                                                                                                                                                                                                                                                                                                                                                                                                                                                                                                                                                                                                                                                                                                                                                                                                                                                                                                                                                                                                                                                                                                                                                                                                                                                                                                                                                                                                                                                                                                                                                                                                                                                                                                                                                                                                                                                                                                                                                                                                                                                                                                                                                                                                                                                                                                                                                                                                                                                                                                                                                                                                                                                                                                                                                                                                                                                                                                                                                                                                                                                                                                                                                                                                                                                                                                                                                                                                                                                                                                                                                                                                                                                                                                                                                                                                                                                                                                                                                                                                                                                                                                                                                                                                                                                                                                                                                                                                                                                                                                                                                                                                                                                                                                                                                                                                                                                                                                                                                                                                                                                                                                                                                                                                                                                                                                                                                                                                                                                                                                                                                                                                                                                                                                                                                                                                                                                                                                                                                                                                                                                                                                                                                                                                                                                                                                                                                                                                                                                                                                                                                                                                                                                                                                                                                                                                                                                                                                                                                                                                                                                                                                                                                                                                                                                                                                                                                                                                                                                                                                                                                                                                                                                                                                                                                                                                                                                                                                                                                                                                                                                                                                                                                                                                                                                                                                                                                                                                                                                                                                                                                                                                                                                                                                                                                                                                                                                                                                                                                                                                                                                                                                                                                                                                                                                                                                                                                                                                                                                                                                                                                                                                                                                                                                                                                                                                                                                                                                                                                                                                                                                                                                                                                                                                                                                                                                                                                                                                                                                                                                                                                                                                                                                                                                                                                                                                                                                                                                                                                                                                                                                                                                                                                                                                                                                                                                                                                                                                                                                                                                                                                                                                                                                                                                                                                                                                                                                                                                                                                                                                                                                                                                                                                                                                                                                                                                                                                                                                                                                                                                                                                                                                                                                                                                                                                                                                                                                                                                                                                                                                                                                                                                                                                                                                                                                                                                                                                                                                                                                                                                                                                                                                                                                                                                                                                                                                                                                                                                                                                                                                                                                                                                                                                                                                                                                                                                                                                                                                                                                                                                                                                                                                                                                                                                                                                                                                                                                                                                                                                                                                                                                                                                                                                                                                                                                                                                                                                                                                                                                                                                                                                                                                                                                                                                                                                                                                                                                                                                                                                                                                                                                                                                                                                                                                                                                                                                                                                                                                                                                                                                                                                                                                                                                                                                                                                                                                                                                                                                                                                                                                                                                                                                                                                                                                                                                                                                                                                                                                                                                                                                                                                                                                                                                                                                                                                                                                                                                                                                                                                                                                                                                                                                                                                                                                                                                                                                                                                                                                                                                                                                                                                                                                                                                                                                                                                                                                                                                                                                                                                                                                                                                                                                                                                                                                                                                                                                                                                                                                                                                                                                                                                                                                                                                                                                                                                                                                                                                                                                                                                                                                                                                                                                                                                                                                                                                                                                                                                                                                                                                                                                                                                                                                                                                                                                                                                                                                                                                                                                                                                                                                                                                                                                                                                                                                                                                                                                                                                                                                                                                                                                                                                                                                                                                                                                                                                                                                                                                                                                                                                                                                                                                                                                                                                                                                                                                                                                                                                                                                                                                                                                                                                                                                                                                                                                                                                                                                                                                                                                                                                                                                                                                                                                                                                                                                                                                                                                                                                                                                                                                                                                                                                                                                                                                                                                                                                                                                                                                                                                                                                                                                                                                                                                                                                                                                                                                                                                                                                                                                                                                                                                                                                                                                                                                                                                                                                                                                                                                                                                                                                                                                                                                                                                                                                                                                                                                                                                                                                                                                                                                                                                                                                                                                                                                                                                                                                                                                                                                                                                                                                                                                                                                                                                                                                                                                                                                                                                                                                                                                                                                                                                                                                                                                                                                                                                                                                                                                                                                                                                                                                                                                                                                                                                                                                                                                                                                                                                                                                                                                                                                                                                                                                                                                                                                                                                                                                                                                                                                                                                                                                                                                                                                                                                                                                                                                                                                                                                                                                                                                                                                                                                                                                                                                                                                                                                                                                                                                                                                                                                                                                                                                                                                                                                                                                                                                                                                                                                                                                                                                                                                                                                                                                                                                                                                                                                                                                                                                                                                                                                                                                                                                                                                                                                                                                                                                                                                                                                                                                                                                                                                                                                                                                                                                                                                                                                                                                                                                                                                                                                                                                                                                                                                                                                                                                                                                                                                                                                                                                                                                                                                                                                                                                                                                                                                                                                                                                                                                                                                                                                                                                                                                                                                                                                                                                                                                                                                                                                                                                                                                                                                                                                                                                                                                                                                                                                                                                                                                                                                                                                                                                                                                                                                                                                                                                                                                                                                                                                                                                                                                                                                                                                                                                                                                                                                                                                                                                                                                                                                                                                                                                                                                                                                                                                                                                                                                                                                                                                                                                                                                                                                                                                                                                                                                                                                                                                                                                                                                                                                                                                                                                                                                                                                                                                                                                                                                                                                                                                                                                                                                                                                                                                                                                                                                                                                                                                                                                                                                                                                                                                                                                                                                                                                                                                                                                                                                                                                                                                                                                                                  

\begin{trailer}{Complex irreducible versus real irreducible}

The statements in Corollary \ref{cor:decomp} rely on the ground field to be algebraically closed. Therefore, a little bit of caution is necessary when working, for example, over the real numbers. Since this will be important for optimization we briefly highlight this situation. For a real irreducible representation $(V,\rho)$, there are two possible situations that lead to three different types (see~\cite[Section~13.2]{serre-b77}):
\begin{enumerate}
    \item If the complexification $V\otimes \C$ is also irreducible (type I), then  representation can be directly transferred from $V \otimes \C$ to $V$ and we can directly apply Corollary \ref{cor:decomp}.
    \item If the complexification $V\otimes \C$ is reducible, then it will decompose into two complex-conjugate irreducible $G$-submodules $V_1$ and $V_2$. These submodules may be non-isomorphic (type II) or isomorphic (type III). 
    \end{enumerate}

 If the complexification $V \otimes \C$ of a real $G$-module $V$ has the isotypic decomposition \[V\otimes\C=V_1\oplus\cdots\oplus V_{2l}\oplus V_{2l+1} \oplus\cdots\oplus V_h,\] where each pair $(V_{2j-1}, V_{2j})$ is complex conjugate ($1 \le j \le l$) and $V_{2l+1},\ldots, V_{h}$ are real, we can keep track of this decomposition in the real representation, in the following way: consider a pair of complex conjugated $G$-modules $(V_{2j-1}, V_{2j})$ with $d=\dim V_{2j-1}$. Then a basis $\mathcal{B}_{{2j-1}}$ of $V_{2j-1}$ and the conjugated basis $\mathcal{B}_{{2j}}=\overline{\mathcal{B}_{{2j-1}}}$ of $V_{2j}$  can be used to  obtain a real basis of $V_{2j-1}\oplus V_{2j}$ by considering \[\left\{b_1+b_1',\ldots,b_d+b_d',\frac{1}{\mathrm{i}}(b_1-b_1'),\ldots,\frac{1}{\mathrm{i}}(b_d-b_d')\right\}\] where $b_i\in\mathcal{B}_{{2j-1}}$ and $b_i'\in\mathcal{B}_{{2j-1}}$. Therefore, with a slight abuse of notation, we can translate the  isotypic decomposition of $V\otimes\C$ above to a  decomposition into real irreducible $G$ representations via \[V \ = \ \left( V_1+V_2 \right) \oplus \frac{1}{\mathrm{i}} \Big(V_1-V_2 \Big)\oplus \cdots \oplus \left( V_{2l-1}+V_{2l} \right) \oplus \frac{1}{\mathrm{i}} \Big( V_{2l-1} - V_{2l} \Big)\oplus V_{2l+1}\oplus\cdots\oplus V_{h.}\]  Note that in the case of a real irreducible representation $W$, also the structure of the corresponding endomorphism algebras differs from the complex case, depending on which of the three cases we are in. Indeed, in the case  of non-algebraically closed fields, the second statement in Schur's Lemma \ref{thm:schur} only yields that $\End(W)$ is isomorphic to a skew field. There are exactly three skew fields over $\R$, namely $\R$ itself, $\C$,  and the Quaternions $\HH$, and these three cases exactly correspond to the types discussed above. 
Therefore, in the case of a real irreducible representation,   we get that the endomorphism algebra is isomorphic to $\R$ if it is of  (type I), it is isomorphic to $\C$ if we are of (type II) and it is isomorphic to $\HH$ in the case of (type III).
\end{trailer}

\begin{example}{Real symmetry adapted basis for circulant matrices}
We consider again the   cyclic group $C_4$ acting  linearly on $\R^4$ by cyclically permuting coordinates. This representation is isomorphic to the regular representation, and we know a complex symmetry-adapted basis. If we denote by 
$b^{(1)}, \ldots, b^{(4)}$ the basis elements of the symmetry-adapted basis given above, then the  4 real vectors
\[\mathcal{B}:=\left\{ b^{(1)},\, b^{(3)},\, b^{(2)}+b^{(4)},\, \frac{1}{\mathrm{i}}(b^{(2)} - b^{(4)})\right\}\] yield a decomposition into real irreducible representations.
The matrix $T$ considered in the example above is then of the form

\[T_{\mathcal{B}}= \left( \begin {array}{cccc} \alpha+\beta+\gamma+\delta&0&0&0\\ 
\noalign{\medskip}0&\alpha-\beta+\gamma-
\delta&0&0\\
\noalign{\medskip}0&0&\delta-\beta&-\alpha+\gamma\\
\noalign{\medskip}0&0&\alpha-\gamma&\delta
-\beta\\ 
\end {array} \right).
\]
We thus see that the two non-real irreducible representations of $C_4$ give one real irreducible representation. We are in (type II) as these two complex irreducible representations are not isomorphic and therefore the corresponding endomorphism algebra decomposes into the blocks as above. Note that in the case of a \emph{symmetric circulant matrix} we would have obtained the same diagonal form as in the complex case. 
\end{example}
\subsection{Representation theory of $\S_n$}\label{ssec:RepSn}
The representation theory of the symmetric group, which is of particular importance due to its historical significance and connections to combinatorics, is outlined here for future reference. It is known that the conjugacy classes of the symmetric group $\S_n$ correspond one-to-one with partitions of $n$, which are non-decreasing sequences of positive integers that sum to $n$. 
A Young diagram, corresponding to a partition $\lambda=(\lambda_1,\cdots,\lambda_k) \vdash n$, is a collection of cells arranged in left-aligned rows, with $\lambda_1$ cells in the top row, $\lambda_2$ cells in the row below, and so on. 
\begin{example}{Example of a Young diagram}
Consider the partition $(4,3,1) \vdash 8$. To this partition we associate the following Young diagram: \[\ydiagram{4,3,1}\] 
\end{example}
A \emph{Young tableau} of shape $\lambda$ is obtained by filling the cells of the Young diagram of $\lambda$ with the integers $1$ to $n$. Two tableaux are considered equivalent if their corresponding rows contain the same integers. Given a tableau $T$,  its row-equivalent class $\{T\}$ can be visualized by removing the vertical lines separating the boxes in each row. Such a row equivalence class is also called a tabloid. We call the formal $\K$-vector space $\mathcal{M}^\lambda$ which is spanned by all $\lambda$-tabloids the \emph{partition module} associated to $\lambda$. 
\begin{example}{Example of equivalent Young tableaux}
Continuing with the above example and the partition $(4,3,1)\vdash 8$ we have, for example, the following two equivalent tableaux
\[T_1=\begin{ytableau}1 & 5& 7& 3 \\ 6&2&4\\8\end{ytableau} \quad \textrm{ and } \quad T_2=\begin{ytableau}7 & 5& 3& 1 \\ 2&4&6\\8\end{ytableau}.\] We can represent the row equivalence class to which these two tableaux belong by
\[\{T_1\}=\{T_2\}=\ytableausetup{tabloids}  \begin{ytableau} 3 & 1 & 7& 5 \\ 4&6&2 \\ 8\end{ytableau}.\]
Combinatorics reveals further that the associated permutation module $\mathcal{M}^{(4,3,1)}$ is spanned by the $\frac{8!}{4!\cdot 3!}=280$ different tabloids.\end{example}

\ytableausetup{notabloids}
Given a tableau $T$, we denote its columns by $C_1,\cdots,C_c$ and consider the column stabilizer subgroup $\CStab_{T} \subset \S_n$ defined by \[\CStab_{T}= \S_{C_1}\times \S_{C_2}\times\cdots\times \S_{C_c}.\] 

This setup of notations allows to define the following class of $\S_n$ representations, which turn out to give a complete list of irreducible representations in the case $\operatorname{char}(\K)=0$.

\begin{definition}\label{def:Specht}
Let $\lambda \vdash n$ be a partition. For a Young tableau $T$ of shape $\lambda$ we define 
$$E_T:=\sum_{\sigma \in \CStab_{T}}\sgn(\sigma) \{\sigma T\}.$$
Then, the Specht module $W^\lambda\subseteq\mathcal{M}^\lambda$ associated to $\lambda$ is the $\K$-vector space spanned by the $E_T$ corresponding to all Young tableaux of shape $\lambda$.
\end{definition}

A tableau is standard if every row and every column is filled in increasing order and it can be shown that the set of Young tabloids corresponding to standard Young tableaux is a minimal generating set of a Specht module. More importantly we have the following theorem:

\begin{theorem}
If $\operatorname{char}(\K)=0$ then the set $\{W^\lambda,\ \lambda \vdash n\}$ is the set of non-isomorphic irreducible representations of $\S_n$.
\end{theorem}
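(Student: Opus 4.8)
The plan is to establish three things: each Specht module $W^\lambda$ is irreducible, the modules $W^\lambda$ and $W^\mu$ are non-isomorphic whenever $\lambda\neq\mu$, and there are enough of them to exhaust all irreducible representations. Throughout I work in characteristic zero and equip the partition module $\mathcal{M}^\lambda$ with the $\S_n$-invariant symmetric bilinear form for which the $\lambda$-tabloids form an orthonormal basis; for a $\lambda$-tableau $T$ I write $b_T:=\sum_{\sigma\in\CStab_T}\sgn(\sigma)\,\sigma$ for the signed column symmetrizer acting on $\mathcal{M}^\lambda$, so that $E_T=b_T\{T\}$ in the notation of Definition~\ref{def:Specht}. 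Since $\pi E_T=E_{\pi T}$ for every $\pi\in\S_n$, the span $W^\lambda$ of the $E_T$ is indeed an $\S_n$-submodule of $\mathcal{M}^\lambda$. Moreover, because $\CStab_T\cap\RStab_T=\{e\}$, the tabloids $\{\sigma T\}$ with $\sigma\in\CStab_T$ are pairwise distinct, so $E_T\neq 0$ and $\langle E_T,E_T\rangle=|\CStab_T|$, which is nonzero in characteristic zero.

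For irreducibility the heart of the matter is the combinatorial \emph{sign lemma}: for a $\lambda$-tableau $T$ and any $\lambda$-tabloid $\{s\}$, either two entries lying in a common column of $T$ lie in a common row of $\{s\}$ — and then a transposition argument forces $b_T\{s\}=0$ — or no such pair exists, in which case $\{s\}=\{\sigma T\}$ for some $\sigma\in\CStab_T$ and $b_T\{s\}=\pm E_T$. Consequently $b_T\mathcal{M}^\lambda=\K E_T$ for every $T$. From this I would deduce James's \emph{submodule theorem}: if $U\subseteq\mathcal{M}^\lambda$ is a submodule, then either $W^\lambda\subseteq U$ or $U\subseteq(W^\lambda)^\perp$. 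Indeed, if $b_T u\neq 0$ for some $u\in U$ and some $T$, then $E_T\in\K b_T u\subseteq U$, hence $E_{\pi T}=\pi E_T\in U$ for all $\pi$ and $W^\lambda\subseteq U$; otherwise $b_T u=0$ for all $u\in U$ and all $T$, and since the form is $\S_n$-invariant and $\sgn(\sigma^{-1})=\sgn(\sigma)$, the operator $b_T$ is self-adjoint, so $\langle u,E_T\rangle=\langle u,b_T\{T\}\rangle=\langle b_T u,\{T\}\rangle=0$ and $U\subseteq(W^\lambda)^\perp$. Applying this with $U=W^\lambda$ and using $\langle E_T,E_T\rangle\neq 0$ gives $W^\lambda\not\subseteq(W^\lambda)^\perp$, hence $W^\lambda\cap(W^\lambda)^\perp=0$; applying it to an arbitrary submodule of $W^\lambda$ then shows that $W^\lambda$ has no proper nonzero submodule, i.e. is irreducible.

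To separate the $W^\lambda$, I would bring in the dominance partial order on partitions of $n$ ($\lambda$ dominates $\mu$ when $\lambda_1+\cdots+\lambda_i\geqslant\mu_1+\cdots+\mu_i$ for all $i$) and prove the key estimate: if $\theta\in\Hom_{\S_n}(\mathcal{M}^\lambda,\mathcal{M}^\mu)$ does not vanish on $W^\lambda$, then $\lambda$ dominates $\mu$. Here $\theta(E_T)=b_T\,\theta(\{T\})$ is nonzero (as $\theta$ is injective on the irreducible $W^\lambda$), so some $\mu$-tabloid $\{s\}$ appearing in $\theta(\{T\})$ satisfies $b_T\{s\}\neq 0$; the same transposition argument as in the sign lemma shows that then no two entries in a common row of $\{s\}$ lie in a common column of $T$, so each column of $T$ meets the first $i$ rows of $\{s\}$ in at most one cell per row, and a short double count over the columns of $T$ yields $\mu_1+\cdots+\mu_i\leqslant\lambda_1+\cdots+\lambda_i$ for every $i$. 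Now suppose $W^\lambda\cong W^\mu$. By Maschke's Theorem~\ref{thm:Mashcke} the modules $\mathcal{M}^\lambda$ and $\mathcal{M}^\mu$ are semisimple, so composing the isomorphism $W^\lambda\to W^\mu$ with the inclusion $W^\mu\hookrightarrow\mathcal{M}^\mu$ and an $\S_n$-equivariant projection $\mathcal{M}^\lambda\to W^\lambda$ produces a map in $\Hom_{\S_n}(\mathcal{M}^\lambda,\mathcal{M}^\mu)$ not vanishing on $W^\lambda$, and symmetrically one in $\Hom_{\S_n}(\mathcal{M}^\mu,\mathcal{M}^\lambda)$ not vanishing on $W^\mu$; hence $\lambda$ dominates $\mu$ and $\mu$ dominates $\lambda$, forcing $\lambda=\mu$. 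Together with Schur's Lemma~\ref{thm:schur} this shows the $W^\lambda$, $\lambda\vdash n$, are pairwise non-isomorphic irreducibles.

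Finally, completeness. Over $\C$ the number of non-isomorphic irreducible $\S_n$-modules equals the number of conjugacy classes of $\S_n$, which equals the number of partitions of $n$; since we have exhibited exactly that many pairwise non-isomorphic irreducibles, the family $\{W^\lambda:\lambda\vdash n\}$ is complete over $\C$. For an arbitrary field $\K$ with $\operatorname{char}(\K)=0$ the same, field-independent, arguments show that $W^\lambda$ — which is defined already over $\mathbb{Q}$ via the standard tableaux basis — stays irreducible over $\K$ and that distinct $\lambda$ give non-isomorphic (indeed absolutely non-isomorphic) modules; writing $d_\lambda=\dim W^\lambda$ (the number of standard $\lambda$-tableaux), the relation $\sum_{\lambda\vdash n}d_\lambda^2=|\S_n|=\dim_\K\K[\S_n]$, which is Theorem~\ref{them:regular} applied over $\C$, together with the semisimplicity of $\K[\S_n]$ pins down the Wedderburn decomposition $\K[\S_n]\cong\bigoplus_{\lambda\vdash n}\K^{d_\lambda\times d_\lambda}$, so the $W^\lambda$ account for all irreducible $\K[\S_n]$-modules. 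I expect the combinatorial core — the sign lemma and the double-counting behind the dominance estimate — to be the only genuinely delicate part; everything else is a formal consequence of Maschke's Theorem, Schur's Lemma, and the conjugacy-class count.
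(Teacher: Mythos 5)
The paper states this theorem without proof, as part of the classical background on Specht modules, so there is no in-paper argument to compare against. Judged on its own terms, your proposal is the standard James--Sagan route (sign lemma, submodule theorem, dominance order, conjugacy-class count), and its architecture is sound: the sign lemma, the self-adjointness of $b_T$ with respect to the tabloid form, the double count over columns giving the dominance inequalities, the construction of non-vanishing maps in $\Hom_{\S_n}(\mathcal{M}^\lambda,\mathcal{M}^\mu)$ via Maschke, and the completeness count via the number of partitions (and, over general $\K$, via Theorem~\ref{them:regular}) are all correct, the last part being an acceptable sketch since the $W^\lambda$ and their characters are defined over $\mathbb{Q}$.

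There is, however, one inference that does not hold as written: from $W^\lambda\not\subseteq(W^\lambda)^\perp$ you conclude $W^\lambda\cap(W^\lambda)^\perp=0$. For a symmetric bilinear form over an arbitrary field of characteristic zero this implication is false; for instance in $\C^3$ with the standard bilinear form, $W=\langle(1,0,0),(0,1,\mathrm{i})\rangle$ satisfies $W\not\subseteq W^\perp$ while $(0,1,\mathrm{i})\in W\cap W^\perp$. Since you explicitly run the argument over a general $\K$ of characteristic $0$ (and over $\C$), not only over $\R$ where positive definiteness would rescue it, this is a genuine gap, though an easily repaired one. Either note that $W^\lambda$ is spanned by polytabloids with integer coordinates in the tabloid basis, so the Gram matrix of the restricted form is a rational positive-definite (hence invertible) matrix, and invertibility persists under extension of scalars, giving nondegeneracy of the form on $W^\lambda$ over any $\K$ of characteristic $0$; or bypass the radical entirely: if $0\neq U\subsetneq W^\lambda$ were a submodule, semisimplicity (Maschke's Theorem~\ref{thm:Mashcke}) yields a module complement $U'$ with $W^\lambda=U\oplus U'$, your submodule theorem forces both $U\subseteq(W^\lambda)^\perp$ and $U'\subseteq(W^\lambda)^\perp$ (neither can contain $W^\lambda$), hence $W^\lambda\subseteq(W^\lambda)^\perp$, contradicting $\langle E_T,E_T\rangle=|\CStab_T|\neq0$. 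With either repair the rest of your argument goes through unchanged.
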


\begin{example}{The Specht modules of $\S_3$}\ytableausetup{tabloids, smalltableaux, centertableaux}
 The Specht modules, and hence the irreducible representations of $\S_3$, are the following three.
\begin{eqnarray*}
W^{(3)} &=& <\begin{boldmath}{\begin{ytableau}1 & 2 & 3\end{ytableau}}\end{boldmath}>,\\ 
 W^{(2,1)} &=& \left<\begin{boldmath}{\begin{ytableau}1 & 2 \\ 3\end{ytableau}} - {\begin{ytableau}3 & 2 \\ 1\end{ytableau}}\ ,\ {\begin{ytableau}1 & 3 \\ 2\end{ytableau}} - {\begin{ytableau}2 & 3 \\ 1\end{ytableau}} \end{boldmath}\right>,\\
 W^{(1,1,1)} &=& \left<\begin{boldmath}\begin{ytableau}1\\2\\3\end{ytableau}-\begin{ytableau}2\\1\\3\end{ytableau}-\begin{ytableau}3\\2\\1\end{ytableau}-\begin{ytableau}1\\3\\2\end{ytableau}+\begin{ytableau}2\\3\\1\end{ytableau}+\begin{ytableau}3\\1\\2\end{ytableau}\end{boldmath}\right>.\\
\end{eqnarray*}
We further give the associated characters. These are constant on the 3 conjugacy classes of $\S_3$, namely
$C_1=\{\Id\}$, $C_2=\{(1 2), (1 3),(2 3)\}$ and  $C_3=\{(1 2 3),(1 3 2)\}$.
\[\begin{array}{c|ccc}
&\ C_1\  &\ C_2\  &\ C_3\   \\\hline
\chi_{(3)}& 1 &\phantom{-} 1 & \phantom{-} 1 \\
\chi_{(2,1)} & 2 &\phantom{-} 0 &-1\\
\chi_{(1,1,1)} & 1 & -1 &\phantom{-} 1 \\
\end{array}\]
\end{example}
This understanding of the irreducible representations allows us to examine the following example.

\begin{example}{Diagonalization of a $S_3$-invariant Gram matrix}
We consider the permutation action of the group $\S_3$ on $\R^3$ and construct a Gram Matrix of the invariant scalar product. Since it is supposed to be $\S_3$-invariant, we find
\[\langle e_j,e_j\rangle_{\S_3}=\alpha \text{ for } j=1..3, \text{ and } \langle e_i,e_j\rangle_{\S_3}=\beta \text{ for } j\neq i .\]
The associated Gram matrix is therefore of the form
\[
\begin{pmatrix}\alpha&\beta&\beta\\ \beta&\alpha&\beta
\\ \beta&\beta&\alpha\\ 
\end{pmatrix}.\]
We further have seen that this representation decomposes into two irreducible ones, one of them being the trivial one, the other one being $W^{(2,1)}$. Thus, we find that, in this situation, the vectors $\{e_1+e_2+e_3,e_1-e_2,e_1-e_3\}$ form a symmetry adapted basis. Indeed, with respect to this basis, the Gram matrix is of the form
\[
\begin{pmatrix}
a+2\,b&0&0\\ 
0&a-b&0\\ 
0&0&a-b
\end{pmatrix}.
\]
\end{example}
\subsection{Using representation theory to simplify semidefinite formulations}\label{ssec:repSDP}

We have seen that Schur's Lemma allows for a block-diagonalization for matrices which commute with a given group action.
Now assume that $(\R^n,\rho)$ is an $n$-dimensional representation of a finite group $G$. As we can always choose an orthonormal basis for $\R^n$ with respect to a $G$-invariant scalar product, we can assume without loss of generality that the corresponding matrices are unitary, i.e we have $M(g)M(g)^T=\Id$  for all $g\in G$. Now this representation naturally carries over to a representation  on $\Sym_n(\R)$ via 
\[X^{g}:= M(g) XM(g)^{T},\quad \mbox{ for } X\in \Sym_n(\R) \,\text{and} \,g\in G.\]
A set $\mathcal{L}\subseteq \Sym_n(\K)$  is  called \emph{invariant with 
respect to $G$} if for all $X\in \mathcal{L}$ we have $X^{g}\in 
\mathcal{L},\,\mbox{ for all }\, g\in G$.
A linear functional $\langle C, X\rangle$ is \emph{$G$-invariant}, if $\langle X^{g},C\rangle=\langle X,C\rangle$ for all $g \in G$ and
 an SDP is \emph{$G$-invariant} if both the cost function $\langle X,C \rangle$ as well as the feasible set $\mathcal{L}$ are $G$-invariant.
\begin{definition}
For a given SDP  we consider 
\[
  \begin{array}{rcl}
  \multicolumn{3}{l}{y^{*}_G:=\inf \, \langle X,C \rangle} \\
  \text{s.t.} \quad \langle X,A_i \rangle & = & b_i \, , \quad 1 \le i \le m \, , \\
    {X}&  {=}&  {X^g\quad \mbox{ for all } g\in G},\\
    X & \succeq & 0 ,\text{ where }X \in \Sym_n(\R) \, .
  \end{array}
\]
\end{definition}
Clearly, {$y^*\leq y_G^*$}.
Even more, we have the following.

\begin{theorem}\label{thm:inv}
If the original SDP is $G$-invariant then we have $y^{*}_G=y^*$.
\end{theorem}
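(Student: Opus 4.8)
The plan is to exploit convexity together with the fact that the feasible set is stable under the group action. First I would observe that if $X$ is any feasible point of the original SDP, then for every $g \in G$ the point $X^g = M(g) X M(g)^T$ is again feasible: indeed $X^g \succeq 0$ because congruence by an invertible matrix preserves positive semidefiniteness, and $\langle X^g, A_i\rangle = b_i$ since the SDP is assumed $G$-invariant (so the linear constraints, collectively, are permuted among themselves or, more precisely, the feasible set $\mathcal L$ is $G$-invariant). Moreover, $G$-invariance of the cost functional gives $\langle X^g, C\rangle = \langle X, C\rangle$, so all the points in the orbit of $X$ have the same objective value.

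The key step is then the \emph{symmetrization} (Reynolds) operator. Given a feasible $X$, set
\[
\overline{X} := \frac{1}{|G|} \sum_{g \in G} X^g.
\]
Because $\Sym_n(\R)$ is a vector space, $\overline{X} \in \Sym_n(\R)$; because the PSD cone is convex and closed under the action, $\overline{X} \succeq 0$; because each affine constraint $\langle \cdot, A_i\rangle = b_i$ is preserved on the orbit and the constraint is affine, $\langle \overline{X}, A_i\rangle = b_i$. Hence $\overline{X}$ is feasible. Furthermore $\overline{X}$ is $G$-invariant: for any $h \in G$, $(\overline{X})^h = \frac{1}{|G|}\sum_{g} X^{hg} = \overline{X}$ by reindexing the sum. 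Finally, by linearity of the trace and $G$-invariance of the cost, $\langle \overline{X}, C\rangle = \frac{1}{|G|}\sum_g \langle X^g, C\rangle = \langle X, C\rangle$. So to every feasible point of the original SDP we have associated a feasible point of the $G$-invariant SDP with the same objective value.

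This yields $y^*_G \le y^*$: taking an infimizing sequence (or the optimum, if attained) $X_k$ for the original problem and symmetrizing gives feasible points $\overline{X_k}$ of the invariant problem with $\langle \overline{X_k}, C\rangle = \langle X_k, C\rangle \to y^*$. Combined with the trivial inequality $y^* \le y^*_G$ noted just before the theorem (the invariant SDP is a restriction of the original one, so its infimum can only be larger), we conclude $y^*_G = y^*$. The main point to be careful about — the only place where anything could go wrong — is the handling of the constraint set: one must make sure that $G$-invariance of the SDP really does imply that $\langle X, A_i\rangle = b_i$ for all $i$ is preserved under $X \mapsto X^g$ (equivalently, that the affine subspace $\{X : \langle X, A_i\rangle = b_i\}$ is $G$-stable), since the $A_i$ themselves need not be individually fixed by the action; this is exactly what the hypothesis ``$\mathcal L$ is $G$-invariant'' in the definition of a $G$-invariant SDP supplies, and the convexity of $\mathcal L$ then delivers $\overline{X} \in \mathcal L$ directly without even needing to track the individual constraints. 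The rest is routine linear algebra.
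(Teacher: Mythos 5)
Your proposal is correct and follows essentially the same route as the paper's proof: symmetrize a feasible point via the group average $\frac{1}{|G|}\sum_{g\in G}X^g$, use convexity and $G$-invariance of the feasible set and objective to see that the averaged point is feasible, invariant, and has the same cost, and combine with the trivial inequality $y^*\leq y^*_G$. Your write-up just spells out the details (reindexing, infimizing sequences, feasibility of each $X^g$) more explicitly than the paper does.
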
 
\begin{proof}
For every feasible $X$ and $g\in G$ the matrix $X^{g}$ is feasible. We have $\langle X,C\rangle=\langle X^{g},C\rangle$ for  every $g\in G$. Since the feasible region is convex we have
 \[{X_G:=\frac{1}{|G|}\sum_{g\in G} X^g}\] is feasible with  ${\langle X,C\rangle= \langle X_G,C\rangle}$ and $X_G^g=X_G$ for all $g\in G$. Thus {$y_G^{*}=y^{*}$}.
\end{proof}

Notice that the additional condition we impose in the above formulation, namely that $X=X^{g}$ for all $g\in G$ clearly reduces the dimension of the space of possible solutions and therefore the number of free variables in the formulation. This first step of a \emph{reduction to orbits} therefore reduces the intrinsic dimension of the problem. But in order to further simplify the formulation we also notice that Theorem \ref{thm:inv} allows us to restrict to invariant matrices (i.e., the commutant of the associated matrix representation). By Schur's Lemma we know that we can find a basis that \emph{block-diagonalises} the matrices in this space.
 Let \[\Sym_n(\R)=H_{1,1} \bot H_{1,2}\bot H_{1,m_i}\bot H_{2,1}\bot\cdots\bot H_{k,m_k}\] be an orthogonal decomposition into irreducibles, and pick an orthonormal  basis $e_{l,1,u}$ for each $H_{l,1}$.
 Choose $\phi_{l,i}:\, H_{l,1}\rightarrow H_{l,i}$ to obtain orthogonal bases \[e_{l,u,v}=\phi_{l,i}(e_{l,1,v})\] for each $H_{l,u}$. This then gives us an \emph{orthonormal symmetry adapted basis}.
 Now, for ever irreducible representation and every $(i,j)\in \{1,\ldots,n\}^2$ we can define \emph{zonal matrices} $E_l(i,j)$ with coefficients $$(E_{l}(i,j))_{u,v}:=\sum_{h=1}^{d_l}e_{l,u,h}(i)\cdot \overline{e^{k}_{l,v,h}(j)}.$$
 These characterise invariant $X\in \Sym_n(\R)$ via  $X_{i,j}=\sum_{l=1}^k\langle E_l(i,j),M_{l}\rangle$, for some $M_{l} \in \Sym_{m_l}(\R), 1\leq l\leq k$.
Summing up we have provided:
\begin{theorem}
A $G$-invariant SDP is equivalent to the following reduced SDP:
\[
  \begin{array}{rcl}
  \multicolumn{3}{l}{\inf \langle C,X\rangle} \\
  \text{s.t.} \quad \langle A_{i,j}, X \rangle & = & b_i \, , \quad 1 \le j, \le m \, ,1\leq i\leq k,  \\
  X_{i,j}&=&\sum_{l=1}^k\langle E_l(i,j),M_{l}\rangle,\\
    {M}_{l} & \succeq & 0, \text{ where } M_{l} \in \Sym_{m_l}(\R), 1\leq l\leq k.\\
  \end{array}
\]
\end{theorem}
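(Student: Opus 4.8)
The plan is to prove the equivalence in three stages: reduce the optimisation to $G$-invariant matrices via Theorem~\ref{thm:inv}, re-coordinatise the commutant of $M(G)$ by means of the zonal matrices, and then verify that positive semidefiniteness, the objective and the linear constraints all transfer correctly under this change of coordinates.

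First I would invoke Theorem~\ref{thm:inv}: since the SDP is $G$-invariant we have $y^{*}=y_G^{*}$, so it suffices to optimise over feasible $X$ satisfying $X=X^g$ for all $g\in G$, that is, over $X\in\Com(M(G))\cap\Sym_n(\R)$. For such an $X$ and any $g\in G$, cyclicity of the trace together with $M(g)M(g)^T=\Id$ gives $\langle A_i,X\rangle=\langle A_i,X^g\rangle=\langle M(g)^TA_iM(g),X\rangle$, and averaging over $G$ yields $\langle A_i,X\rangle=\langle \overline{A_i},X\rangle$, where $\overline{A_i}:=\tfrac{1}{|G|}\sum_{g\in G}M(g)A_iM(g)^T$ is itself $G$-invariant; the same applies to $C$. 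Hence one may assume from the outset that $C$ and the $A_i$ lie in the commutant, and the data $A_{i,j}$ appearing in the reduced program simply record these averaged matrices in the symmetry-adapted coordinates.

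Next I would use the parametrisation recorded immediately before the statement, which is a direct consequence of Corollary~\ref{cor:decomp} and hence of Schur's Lemma: in an orthonormal symmetry-adapted basis the commutant of $M(G)$ is exactly the set of block-diagonal matrices whose $l$-th block equals $M_l\otimes I_{d_l}$ for some $M_l\in\Sym_{m_l}(\R)$, which in the original coordinates reads $X_{i,j}=\sum_{l=1}^k\langle E_l(i,j),M_l\rangle$. This is a linear isomorphism between $\Com(M(G))\cap\Sym_n(\R)$ and $\bigoplus_{l=1}^k\Sym_{m_l}(\R)$. Since a change to an orthonormal basis preserves positive semidefiniteness and $M_l\otimes I_{d_l}\succeq 0$ if and only if $M_l\succeq 0$, we obtain $X\succeq 0$ if and only if every $M_l\succeq 0$. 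Finally, substituting the block decomposition into $\langle C,X\rangle$ and into each $\langle\overline{A_i},X\rangle$ turns the objective and the constraints into linear functionals of $(M_1,\dots,M_k)$, with each block $M_l$ entering weighted by $d_l=\dim W_l$; this is precisely the reduced SDP. Combining the three stages, the feasible $G$-invariant matrices $X$ are in objective-preserving bijection with the feasible tuples $(M_1,\dots,M_k)$, and by the first stage their optimal values coincide, which is the asserted equivalence.

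The main obstacle is making the middle stage fully rigorous over $\R$, where the subtlety highlighted in the discussion of real versus complex irreducibles intervenes: the $l$-th isotypic block has the clean tensor form $M_l\otimes I_{d_l}$ only when $W_l$ is of type~I, while for types~II and~III the endomorphism algebra is $\C$ or $\HH$, so the natural block variable is a complex-Hermitian or quaternionic-Hermitian matrix and one must check that positive semidefiniteness still survives the real-linear isomorphism and that the zonal matrices $E_l(i,j)$ are set up to accommodate all three cases. Once this bookkeeping is handled, the equivalence between positive semidefiniteness of $X$ and that of all the blocks $M_l$, together with the rewriting of the objective and the right-hand sides $b_i$, is routine.
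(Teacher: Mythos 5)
Your proposal is correct and follows essentially the same route as the paper: the paper obtains this theorem by combining Theorem~\ref{thm:inv} (restriction to invariant $X$), the consequences of Schur's Lemma in Corollary~\ref{cor:decomp} (block structure of the commutant in a symmetry-adapted basis), and the zonal-matrix parametrisation $X_{i,j}=\sum_l\langle E_l(i,j),M_l\rangle$ stated immediately before the theorem, exactly the three stages you describe. Your closing caveat about real irreducibles of types~II and~III is also the same point the paper makes in its remark, where it resolves the issue by passing to Hermitian matrices, which is why the zonal matrices are defined with a complex conjugate.
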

\begin{trailer}{Some remarks}
The main work that has to be done to obtain this nice form (i.e., the actual calculation of the zonal matrices) is far from trivial. However the possible reductions can make a difference that enables one to actually compute a given SDP which otherwise might be too big to be handled by a SDP-solver. Furthermore,  in the context of semidefinite programming the issue with real irreducible representations versus  complex irreducible  representations can be easily avoided  by replacing symmetric matrices with hermitian matrices. This is why the definition of the zonal matrices was given with the complex conjugate, which is necessary only in the case of a complex symmetry adapted basis.
\end{trailer}
We want to highlight this potential reduction of complexity by studying the example of the $\vartheta$-number introduced in Definition \ref{def:theta} for graphs with symmetry. 

\begin{example}{Symmetry reduction for the $\vartheta$-number of a cyclic graph $C_n$}
    Consider a cyclic graph $C_n$ shown in the picture below for $n=10.$

    \begin{figure}[ht!]
    \begin{center}

 \begin{tikzpicture}
  \GraphInit[vstyle=Normal];
  \grCycle[prefix=v,Math=true,RA=2]{10};
\end{tikzpicture}
    \end{center}

\end{figure}
The $\vartheta$-number in this case is given  by
\begin{equation}\label{theta_cycel}
\begin{array}{lll} 
  \vartheta(C_n) & = \sup\big\{  \sum_{i,j} B_{i,j} : &
  B\in \Sym_n(\R),\ B \succeq 0 \\
&&   \sum_{i=1}^n B_{i,i}=1,\\
&&   B_{i,j}=0\  \quad  j\equiv i+1 \quad (\textrm{mod}\ n)\big\}.\\
\end{array}
\end{equation}
We see that this SDP is invariant under the natural action of the cyclic group $\Z/ n\Z$. 
We get  a symmetry adapted basis for the (complexification) of this representation through the Fourier basis, and a corresponding real decomposition. Then one obtains the following equivalent formulation for the SDP \eqref{theta_cycel} 
\begin{equation}\label{theta primal2}
\begin{array}{lll} 
  \vartheta(C_n) & = \sup\big\{  n\cdot x_0 : &
  (x_0,\ldots,x_{\lfloor \frac{n}{2}\rfloor})\in\R_{\geq 0}\ \\
&&   \sum\limits_{j=0}^{\lfloor \frac{n}{2}\rfloor} x_j=1,\\
&&   \sum\limits_{j=0}^{\lfloor \frac{n}{2}\rfloor} x_j\cdot\cos(\frac{2j\pi}{n})=0\big\}\\
\end{array}
\end{equation}
Through this formulation of  the $\vartheta$-number as a linear program, it is possible to calculate the  $\vartheta$-number directly. Indeed, we can deduce \cite[Corollary 5]{lovasz79}
$$\vartheta(C_k) =
\begin{cases}
  \frac{n \cos(\pi/k)}{1 + \cos(\pi/k)} & \text{for odd }  k, \\
  \frac{k}{2}                           & \text{for even } k.
\end{cases}$$
\end{example}
Recall that our main objective is polynomial optimization. We now turn our attention to specific SDPs stemming from sums of squares approximations.
Note that in this situation the action of a group $G$  on $\R^n$ also induces an action on  the $\R-$vector space $\R[X]$, as we have seen before and a matrix representation $M(G)$ for the space of polynomials of degree at most $d$. 
Thus, if $p^g=p$ for all $g\in G$ we can define

\[Q^G:=\frac{1}{|G|}\sum_{g\in G}M(g)^TQM(g),\]
and will have \[p \ = \ (Y)^T Q^G Y\] with the property that now $Q^G$ commutes with the matrix representation $M(G)$. Therefore the above methods for general SDPs can be used again to block-diagonalize the matrix $Q^G$ and thus simplify the calculations. This was first explored in detail by Gatermann and Parillo \cite{gatermann2004symmetry} and several other authors \cite{cimprivc2009sums,riener2013exploiting,dostert2017new}. 
We want to highlight this in the following example.

\begin{example}{Symmetry reduction of the SOS decomposition of a symmetric quadratic}
Consider the homogeneous polynomial \[p=a\cdot\sum_{i=1}^n X_i^2+b\cdot \sum_{i<j} X_{i}X_{j}.\] We want to examine the conditions on $a$ and $b$ such that $p$ is a sum of squares. Since $p$ is homogeneous and of degree 2, a sum of squares decomposition will involve only squares of homogeneous polynomials of degree 1. 
In other words, by Proposition~\ref{prop:sossemidefinite} we consider the vector $Y=(X_1,\ldots,X_n)$ comprised of all monomials of degree 1 and have that  the polynomial $p$ is a sum of squares if and only if we can find a positive semidefinite matrix $Q$ of size $n\times n$ such that \[p=Y^TQY.\] 
Now, by construction, $p$ remains invariant by the permutation action of $\S_n$ on the monomials $X_1,\ldots, X_n$. This action is represented by the \emph{permutation matrices}, and we can therefore assume that $Q$ commutes with every permutation matrix. As we have seen above, this representation decomposes into two non-isomorphic irreducible representations, one isomorphic to the trivial representation and the other one to the Specht module $W^{(n-1,1)}$.
Therefore, in a symmetry adapted basis, we may assume that the matrix $Q$ is of the from 
$$Q=\begin{pmatrix}
\alpha&0&0&\ldots&0\\
0&\beta&0&\ldots&0\\
\vdots&&\ddots&\\
0&0&&\ldots&\beta\\
\end{pmatrix}.
$$
Doing the calculations we obtain that a sums of squares decomposition of $p$ is of the form \[p=\alpha(X_1+X_2+\ldots+X_n)^2+\beta\sum_{i<j}(X_i-X_j)^2=(\alpha+(n-1)\beta)\sum_{i=1}^n X_i^2+ 2(\alpha-\beta)\sum_{i<j} X_iX_j,\]
with $\alpha,\beta\geq 0.$
\end{example}
In the above example we have seen that the sum of squares decomposition essentially consisted of two type of summands. Firstly the square of the invariant polynomial $X_1+X_2,+\ldots+X_n$ and summands which are squares of elements in the $\S_n$-orbit of  $X_1-X_2$. This observation can be made more precise with the following setup: if  we want to decide for a given polynomial $f$ of degree $2d$ which is invariant by a group $G$ if it is a sum of squares we consider  the vector space $\R[X]_{\leq d}$ of polynomials of degree at most $d$, which is  finite dimensional. Then the linear action of a group $G$ on $\R^n$ also induces  a linear action on $\R[X]_{\leq d}$ and  we  can consider the isotypic decomposition 
\begin{equation}\label{eq:decomp1}
\begin{mathbf} \R[X]_{\leq d}=\bigoplus_{i\in {I}}V_i =\bigoplus_{i \in {I}} \bigoplus_{j=1}^{\eta_i} W_{i,j}\end{mathbf}
\end{equation}

where the $W_i$ are again the non-isomorphic irreducible representations of $G$, and any $W_{i,j}$ is an irreducible representation isomorphic to $W_i$.
For the discussion we now assume for simplicity that every irreducible component $W_i$ of this decomposition is real irreducible. In this case we can construct a set of real polynomials  \[\{f_{11},f_{12},\ldots,f_{l \eta_l}\}\subset\R[X]_{\leq d}\] with the following two properties: \begin{enumerate} \item The polynomial $f_{i1}$  cyclically generates $W_{i1}$ i.e., for every fixed $i$ the $G$-span \[ \langle f_{i1} \rangle_G= W_{i,1}.\]
\item The polynomial $f_{ij} \in W_{i,j}$ is the image of $f_{i1}$ under the (up to scalar multiplication) unique  $G$-isomorphism between $W_{i,1}$ to $W_{i,j}$. This in particular implies that   $f_{ij}$ generates cyclically $W_{i,j}.$\end{enumerate}
We denote by 
$\left( \langle f_{i1},\ldots,f_{i\eta_i} \rangle_\R^2 \right)$ a sum of squares of elements in the vector space spanned by the polynomials $f_{i1},\ldots,f_{i\eta_i}$. With this notation, and integrating the described construction  with the consequences of Schur's Lemma (Theorem \ref{thm:schur}),   one arrives at the following  which agrees with the observation from the example above (see also \cite{riener2,cimprivc2009sums,gatermann2004symmetry,riener2013exploiting,debus2020reflection} for more details on  the following statement). 
\begin{theorem}\label{THM Decomp}
With the notation defined above  any $G$-invariant polynomial $p$ of degree $2d$ that is a sum of squares  can be written in the from\begin{align*}
   p=\sum_{i\in I} \sum_{j=1}^{\eta_i}\sum_{g\in G}p_{ij}^g 
    \end{align*}
    where every $p_{ij}\in \left( \langle f_{i1},\ldots,f_{i\eta_i} \rangle_\R^2 \right)$.
\end{theorem}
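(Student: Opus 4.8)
The plan is to combine the SDP/Gram-matrix picture from Proposition~\ref{prop:sossemidefinite} with the block-diagonalization afforded by Schur's Lemma, and then to read off the claimed shape of the decomposition from the structure of the commutant. So first I would start from a $G$-invariant $p$ of degree $2d$ which is a sum of squares. By Proposition~\ref{prop:sossemidefinite}, writing $Y$ for the vector of monomials of degree at most $d$, there is a positive semidefinite matrix $Q$ with $p = Y^T Q Y$. Using $G$-invariance of $p$ and the averaging trick already used for $Q^G$ in the text (replacing $Q$ by $\frac{1}{|G|}\sum_{g\in G} M(g)^T Q M(g)$, which stays positive semidefinite and still represents $p$ because each conjugate does), I may assume $Q$ lies in the commutant of the matrix representation $M(G)$ acting on $\R[X]_{\le d}$.

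Next I would pass to the symmetry-adapted basis coming from the isotypic decomposition \eqref{eq:decomp1}. Under the simplifying assumption that each $W_i$ is real irreducible (type I), Corollary~\ref{cor:decomp} and Schur's Lemma tell us that in this basis $Q$ becomes block-diagonal: it decomposes as a direct sum over $i\in I$ of blocks of the form $N_i \otimes I_{d_i}$ with $N_i \in \Sym_{\eta_i}(\R)$ positive semidefinite. Concretely, if $f_{ij}$ denotes the symmetry-adapted basis vector sitting in $W_{i,j}$ as described before the theorem (with $f_{ij}$ the image of $f_{i1}$ under the canonical $G$-isomorphism), then the quadratic form $Y^T Q Y$ splits as a sum over $i$ of contributions that only couple the $f_{ij}$ among themselves through $N_i$, summed over the $d_i$ ``copies'' of each irreducible inside the multiplicity space in a way that the tensor structure forces to be a $G$-orbit sum. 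Taking a Cholesky factorization $N_i = \sum_t c_{it} c_{it}^T$ and letting $p_{it} := \sum_{j} (c_{it})_j f_{ij} \in \left(\langle f_{i1},\ldots,f_{i\eta_i}\rangle_\R^2\right)^{1/2}$ — i.e. an $\R$-linear combination of the $f_{ij}$ — one checks that the $i$-th block of $Y^T Q Y$ equals $\sum_{g\in G}\sum_t (p_{it}^{\,g})^2$, because applying $g$ permutes the $d_i$ copies and $Q$ being in the commutant means the same $N_i$ governs each copy. Relabelling the squares of these linear combinations as the required $p_{ij}\in\left(\langle f_{i1},\ldots,f_{i\eta_i}\rangle_\R^2\right)$ and summing over $i\in I$ gives exactly $p = \sum_{i\in I}\sum_j \sum_{g\in G} p_{ij}^{\,g}$.

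The main obstacle, and the step deserving the most care, is bookkeeping the interplay between the multiplicity index $j$ (which ranges over the $\eta_i$ isomorphic copies and is the index the $N_i$ block acts on) and the ``internal'' index running over $\dim W_i = d_i$, which is where the group action moves things and which is responsible for producing the $G$-orbit sum $\sum_{g\in G} p_{ij}^g$ rather than a single square. One has to argue carefully that summing the contribution of $Q$ over the $d_i$ internal directions, together with the fact that $Q$ acts as the identity $I_{d_i}$ on that factor, reassembles precisely into an average (up to the harmless factor $|G|$, absorbable into the $p_{ij}$) over the group orbit of a fixed polynomial built from the multiplicity data — this is essentially the observation that a $G$-cyclic module generated by $f_{i1}$ satisfies $\sum_{g\in G} (p_{ij}^g)$-type identities when one starts from an element of $W_{i,1}$. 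The genuinely restrictive hypothesis is the assumption that every $W_i$ is real irreducible; in the type~II and type~III cases the commutant block is $N_i\otimes I_{d_i}$ with $N_i$ over $\C$ or $\HH$, and one would need the real analogue of Corollary~\ref{cor:decomp} together with the remarks on real versus complex irreducibles to push the same argument through, so I would either state the theorem under the type~I assumption (as the excerpt does) or indicate how the three-skew-field dichotomy modifies the form of the $N_i$ without changing the orbit-sum conclusion.
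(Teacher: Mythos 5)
Your strategy is essentially the paper's own proof, translated from bilinear-form language into Gram-matrix language: average the Gram matrix into the commutant, use Schur's Lemma via Corollary~\ref{cor:decomp} to see that on each isotypic component the form is $N_i\otimes I_{d_i}$ with $N_i\succeq 0$ acting on the multiplicity space, factor $N_i$, and recognize the resulting expression as a Reynolds (orbit) average of squares of elements of $\langle f_{i1},\ldots,f_{i\eta_i}\rangle_\R$. The paper does the same thing, except that it packages the last step by introducing the invariant form $C$ with $\mathcal{R}_G\bigl(\sum_t g_t^2\bigr)=C(Z,Z)$ and proving $C=\lambda B^{jj}$ with $\lambda>0$ via Schur's Lemma.

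The one place where your write-up is not yet a proof is the pivotal identity, and the heuristic you offer for it is incorrect as stated: $g$ does not ``permute the $d_i$ copies''. In the decomposition $V_i\cong \R^{\eta_i}\otimes W_i$ the group acts as $I_{\eta_i}\otimes M^{(i)}(g)$, i.e.\ inside each irreducible summand, while the commutant block acts as $N_i\otimes I_{d_i}$; nothing is permuted. What you actually need is that, for compatible bases $f_{ia1},\ldots,f_{iad_i}$ of the summands $W_{i,a}$ (with $f_{ia}=f_{ia1}$), the internal trace $\sum_{h=1}^{d_i} f_{iah}f_{ibh}$, which is what the block $N_i\otimes I_{d_i}$ produces, coincides up to a positive constant with the orbit sum $\sum_{g\in G} f_{ia}^g f_{ib}^g$. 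For an orthogonal, absolutely (type I) irreducible $M^{(i)}$ this follows from the Schur orthogonality relations $\frac{1}{|G|}\sum_{g\in G} M^{(i)}(g)_{h1}M^{(i)}(g)_{h'1}=\frac{1}{d_i}\delta_{hh'}$, which give $\mathcal{R}_G(f_{ia}f_{ib})=\frac{1}{d_i}\sum_{h} f_{iah}f_{ibh}$; equivalently it is the uniqueness up to a scalar of the $G$-invariant pairing on $W_i$, which is exactly how the paper closes its argument by comparing $C$ with $B^{jj}$. With that identity supplied, your Cholesky step and the absorption of the harmless positive constant into the $p_{ij}$ are fine, as is your remark that types II and III require the real form of the commutant blocks rather than a new idea.
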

\begin{proof} 
Let  $p$ be a  sum of squares. Then there is symmetric positive semidefinite bilinear from   $$B: \R[X]_{\leq d}\times \R[X]_{\leq d}\rightarrow \R$$ which is a Gram matrix for $p$, i.e.  for every $x\in\R^{n}$ we can write \[p(x)=B(Y^{d},Y^{d}),\] where $Y^{d}$ stands for the vector of monomials up to degree $d$. 
Since $p$ is $G$-invariant,  by linearity, we may assume that  $B$ is a $G$-invariant bilinear form. 
We get from Corollary~\ref{cor:decomp}  
that $B^{ij}(v,w)=0$ for all $v\in V_{i}$ and $w\in V_{j}$, i.e., the isotypic components appearing in \eqref{eq:decomp1} are orthogonal with respect to $B$ and hence it suffices to look at $$B^{jj}: V_{j}\times V_{j}\rightarrow \R$$ individually. 
If $B^{jj}=0$, there is nothing to prove. Otherwise, consider the decomposition \[V_{j}:=\bigoplus_{k=1}^{\eta_j} W_{j,k},\] where with the notation above each  $W_{j,k}$ is generated by the orbit of a polynomial  $f_{j,k}$. We can freely  identify $V_j$ with its complexification $V_j^{\C}$ since by assumption all irreducible representations are irreducible over $\C$. Furthermore, since $B$ is positive semidefinite and $f_{j1}$ cylically generates $W_{j1}$, we may assume that $B(f_{j1},f_{j1})>0$. We extend $f_{j1}$ to a basis   \[f_{j1}=f_{j11},f_{j12},\cdots,f_{j1d}\] of $W_{j,1}$, and denote by  $f_{ji1},f_{ji2},\cdots,f_{jid}$  the image of this basis under the unique $G$-isomorphism sending  $f_{j1}$ to $f_{ji}$. Consider the vector space $U$ generated by $f_{j1},\cdots,f_{j\eta_j}$. Since the restriction $\restr{B}{U\times U}$  is a positive semi-definite bilinear form, we obtain \[\sum_{a=1}^{\eta_j}\sum_{b=1}^{\eta_j} B(f_{ja},f_{jb})f_{ja}f_{jb} = \sum_{t=1}^z g_t^2\] for some $g_t \in \langle f_{j1},\cdots,f_{j\eta_j}\rangle_\R$.
Now, clearly  there exists a symmetric, $G$-invariant, bilinear form $C : V_j \times V_j \rightarrow \R$ for which we have \[\mathcal{R}_G(\sum_{i=1}^z g_t^2) = C(Z,Z),\] where \[Z=(f_{j11},\ldots,f_{j1d},f_{j21},\ldots,f_{j\eta_j1},\ldots,f_{j\eta_jd}).\]
To conclude we need to see that the form  $C$ is essentially $B^{jj}$, i.e., that it is obtained from $B^{jj}$ by multiplication with a positive scalar. This can be done by direct calculations using the fact that each $f_{jab}^g$ for $g \in G$  can be expressed in the basis of $W_{j,a}$ defined above. It then follows that there exists a  $\lambda_{jbb'}$, independent on $a,a'$ such that \[C(f_{jab},f_{ja'b'}) = \lambda_{jbb'}C(f_{jab},f_{ja'b'}).\] Since $B$ and $C$ can be seen as elements of $\Hom_G(W_{j1},W_{j1}^*)$, applying Schur's lemma  one can prove that $\lambda_{jbb'}$ is actually independent of $b,b'$, that is $C=\lambda B^{jj}$ for some constant $\lambda \in \R^*_+$.
\end{proof}
In analogy to the zonal matrices defined before, we can also reformulate Theorem \ref{THM Decomp} in terms of matrix polynomials, i.e., matrices with polynomial entries. To do this we define a block-diagonal symmetric matrix $B$ with $j$ blocks $B^{(1)},\dots,B^{(j)}$ with the entries of each block given by:
\begin{equation}\label{eq:B}
B^{(j)} = \left( \sum_{g\in G} (f_{ju}^g \cdot f_{jv}^g )\right)_{u,v} .
\end{equation}
With these notations  Theorem \ref{THM Decomp} can be stated in the  following equivalent form.
\begin{corollary}\label{cor:sos}
Let $g \in \R[X]_{\leq 2d}^G$. Then $g$ is a sum of squares if and only if \begin{align*}
    g = \langle A_1 \cdot B^{(1)} \rangle + \ldots +  \langle A_l \cdot B^{(l)} \rangle,
\end{align*}
for some symmetric and positive semidefinite matrices $A_j \in \emph{Mat}_{\eta_j \times \eta_j}(\R)$.
\end{corollary}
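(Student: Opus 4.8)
The plan is to deduce Corollary~\ref{cor:sos} from Theorem~\ref{THM Decomp} by translating its conclusion into the matrix language of~\eqref{eq:B}: no new representation theory is needed, only the spectral factorization of a positive semidefinite matrix together with the fact that each $h\in G$ acts on $\R[X]$ by an $\R$-algebra automorphism, so it commutes with real linear combinations and with squaring. I would prove the two implications separately, the computation for one being literally the reverse of the other.

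For the ``if'' direction, suppose $g=\sum_{j=1}^{l}\langle A_j,B^{(j)}\rangle$ with each $A_j\in\Sym_{\eta_j}(\R)$ positive semidefinite. Factor $A_j=\sum_{t}a_{jt}a_{jt}^{T}$ with $a_{jt}\in\R^{\eta_j}$, so that $\langle A_j,B^{(j)}\rangle=\sum_t a_{jt}^{T}B^{(j)}a_{jt}$; inserting the entries $B^{(j)}_{uv}=\sum_{h\in G}f_{ju}^{h}f_{jv}^{h}$ from~\eqref{eq:B} and using linearity of the $G$-action yields
\[
\langle A_j,B^{(j)}\rangle=\sum_t\sum_{h\in G}\Bigl(\sum_{u}(a_{jt})_u\,f_{ju}^{h}\Bigr)^{2},
\]
which is visibly a sum of squares; summing over $j$ exhibits $g$ as a sum of squares.

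For the ``only if'' direction, let $g\in\R[X]_{\leq 2d}^{G}$ be a sum of squares. I would invoke Theorem~\ref{THM Decomp} and, for each irreducible type $j$, gather the pieces it produces of that type into a single sum of squares $q_j=\sum_t s_{jt}^{2}$ with $s_{jt}=\sum_{u}c_{jt,u}f_{ju}\in\langle f_{j1},\dots,f_{j\eta_j}\rangle_{\R}$, so that $g=\sum_{j}\sum_{h\in G}q_j^{h}$. Setting $A_j:=\sum_t c_{jt}c_{jt}^{T}$ with $c_{jt}=(c_{jt,u})_u\in\R^{\eta_j}$ produces a symmetric positive semidefinite $A_j\in\Sym_{\eta_j}(\R)$ (a sum of rank-one positive semidefinite matrices), and running the computation of the ``if'' direction backwards gives $\sum_{h\in G}q_j^{h}=\langle A_j,B^{(j)}\rangle$, hence $g=\sum_j\langle A_j,B^{(j)}\rangle$, as claimed.

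All the genuine content lives in Theorem~\ref{THM Decomp} (the isotypic decomposition of $\R[X]_{\leq d}$ and Schur's Lemma), so I do not expect any real obstacle; what remains are two bookkeeping points. First, the ``gathering'' in the ``only if'' direction is legitimate because a finite sum of sums of squares of elements of a fixed real subspace is again a sum of squares of elements of that subspace, so the several contributions of a given type coming out of Theorem~\ref{THM Decomp} can indeed be merged into one positive semidefinite block. Second, the identity unfolding the trace pairing $\langle A,B^{(j)}\rangle$ into a $G$-symmetrized sum of squares is valid precisely because $G$ acts by algebra automorphisms. The only thing really demanding care is keeping the three kinds of index distinct: the irreducible type $j$, the position $u,v$ in the corresponding multiplicity space $\langle f_{j1},\dots,f_{j\eta_j}\rangle_{\R}$, and the group element $h\in G$.
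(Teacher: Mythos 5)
Your proof is correct and follows exactly the route the paper intends: the paper states Corollary~\ref{cor:sos} as an immediate reformulation of Theorem~\ref{THM Decomp} via the matrices $B^{(j)}$ from~\eqref{eq:B}, and your two directions (Cholesky factorization of the $A_j$ to unfold $\langle A_j, B^{(j)}\rangle$ into $G$-symmetrized squares, and conversely gathering the type-$j$ pieces of Theorem~\ref{THM Decomp} into a single PSD Gram matrix) are precisely the bookkeeping the paper leaves implicit. No gaps.
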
     
\begin{remark}
The restriction to type I real irreducible representations in the description above is mainly due to convenience of the presentation. Using the discussion in in Section \ref{seq:rep} on real irreducible representations    it is possible  to generalize to type II and  to adapt to type III representations.  
\end{remark}                                                                                                                      
These first examples give a hint on the power of symmetry reduction to simplify optimization problems affording some symmetries. This approach has been used quite successfully in a wide range of applications and settings.
In the next section, we focus on the case of sums of squares and explore how the additional algebraic structure of polynomials can be combined with the above approach to get a better understanding of invariant sums of squares. 

\section{Invariant theory}\label{sec:invariant}

Polynomial functions that remain unchanged under the action of a symmetry transformation naturally appeared in the previous section. 
Such functions are called \emph{polynomial invariants}, and they play a crucial role in \emph{invariant theory}, a branch of algebra that studies symmetry in algebraic structures, such as groups, rings, and fields.
The structural insights into the set of invariant polynomials can give important additional comprehension and different ways of simplifications for polynomial optimization, as we will outline in this section. In order to do this, we give first an overview of the basic concepts and important results in invariant theory which will prove useful in order to take advantage of symmetries in polynomial optimization problems. For details and further insights we refer the reader to \cite{sturmfels,derksen2015computational,inv1,inv2,inv3}.
\subsection{Basics of invariant theory}\label{ssec:BasInv}
We begin with examining the situation of a linear action of a group on the ring of polynomials more closely. As was introduced before, given a finite group $G$  and a representation  $\rho: G\longrightarrow \K^n$ we can define a linear action $f^g$ on the polynomial functions on $V$, denoted $\K[V]$ via $f^g:=f\circ \rho(g^{-1})$. Choosing a basis of $V$ we can identify $\K[V]$ with $\K[X_1,\ldots,X_n]$  and consider the associated isotypic decomposition of the $G$ module:
\begin{equation}\label{decom1}
\K[X_1,\ldots,X_n]=\bigoplus_{i\in I} \K[X_1,\ldots,X_n]^{\chi_i}, 
\end{equation}
where as before $I$ is the set of isomorphism classes of irreducible representations and $\chi_i$ for $i\in I$ is a list of irreducible characters. In addition to being a $\K$-vector space with group action, $\K[X_1,\ldots, X_n]$ is in fact an algebra and this additional algebraic structure also allows for a finer analysis of the group action.  If $\chi$ corresponds to the trivial character the polynomials in $\K[X_1,\ldots,X_n]^\chi$ will not be affected by the action of $G$. Polynomials with this property, i.e, elements \[f\in\K[X_1,\ldots, X_n]\text{ such that } f^g=f \text{ for all }g\in G\]  are called \emph{invariant polynomials}.  Since the  property of being invariant  is not affected by addition or multiplication with other invariant polynomials the set of invariant polynomials forms a sub-algebra denoted by  $\K[X]^G$. The polynomials belonging to the other isotypic components are sometimes also called \emph{semi-invariants}.

\begin{example}{The Motzkin polynomial as an example for a symmetric polynomial}
We consider the group $\S_2$ acting linearly on $\C^2$ by permuting coordinates. Then, the \emph{Motzkin polynomial}
\[M:=X_1^4X_2^2+X_1^2X_2^4-3X_1^2X_2^2+1\] is invariant with respect to this action. In the case of $\S_n$ invariant polynomials we also speak of \emph{symmetric polynomials}.
\end{example}
A very useful tool to work with polynomials in invariant setting is the so-called \emph{Reynolds operator} defined as follows. 
\begin{definition}
For a finite group $G$ the map \[\mathcal{R}_G(f):=\frac{1}{|G|}\sum_{g\in G}f^g\] is called the \emph{Reynolds operator} of $G$.
\end{definition}
The Reynolds operator of a finite group $G$ has the following properties:
\begin{enumerate}
\item $\mathcal{R}_G$ is a $\K[X]^G$- linear map.
\item For $f\in \K[X]$ we have $\mathcal{R}_G(f)\in\K[X]^G$.
\item $\mathcal{R}_G$ is the identity map on $\K[X]^G$, i.e $\mathcal{R}_G(f)=f$ for all $f\in K[X]^G$.
\end{enumerate}

\begin{remark}
Whereas we introduced the Reynolds operator for finite groups, it can be more generally defined for compact and reductive groups and most of the results we state for finite groups mostly can be adapted for this more general case.
We refer the reader to \cite{sturmfels, derksen2015computational}   for more details and algorithmic questions.
\end{remark}
As seen above, $\K[X]^G$ is a subalgebra of $\K[X]$. In the second half of the 19th century, invariant theory was very much concerned with the following question:  is this subalgebra generated by finitely many elements? The following Theorem, proven by Hilbert in 1890, initiated modern invariant theory. 
\begin{theorem}[Hilbert]\label{thm:hilbertfinite}
Let $G$ be a finite group. Then the invariant subalgebra $\K[X]^G$  is generated by finitely many homogeneous invariants, i.e., there is a finite set of invariant homogeneous polynomials $\pi_1,\ldots,\pi_m$ such that every invariant polynomial $f\in \K[X]^G$ can be written as a polynomial in $\pi_1,\ldots,\pi_m$.
\end{theorem}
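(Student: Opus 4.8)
The plan is to use Hilbert's original strategy via the Hilbert Basis Theorem applied to an auxiliary ideal, combined with the Reynolds operator. First I would consider the ideal $\mathfrak{a} \subseteq \K[X_1,\ldots,X_n]$ generated by \emph{all} homogeneous invariant polynomials of positive degree. By the Hilbert Basis Theorem, $\K[X_1,\ldots,X_n]$ is Noetherian, so $\mathfrak{a}$ is finitely generated; moreover, since $\mathfrak{a}$ is generated by homogeneous invariants, I can choose the generators $\pi_1,\ldots,\pi_m$ themselves to be homogeneous invariant polynomials of positive degree (a standard argument: pick any finite generating set, expand each generator into homogeneous components, and keep the invariant pieces by applying $\mathcal{R}_G$ componentwise, noting that $\mathcal{R}_G$ preserves degree). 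The claim is then that these $\pi_1,\ldots,\pi_m$ generate $\K[X]^G$ as a $\K$-algebra.

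The core of the proof is to show, by induction on the degree $d$, that every homogeneous invariant $f \in \K[X]^G$ of degree $d$ lies in $\K[\pi_1,\ldots,\pi_m]$. The base case $d = 0$ is immediate (constants). For the inductive step, since $f$ has positive degree and is invariant, it belongs to $\mathfrak{a}$, so I can write $f = \sum_{i=1}^m h_i \pi_i$ for some $h_i \in \K[X_1,\ldots,X_n]$; after discarding terms of the wrong degree I may assume $h_i$ is homogeneous of degree $d - \deg \pi_i < d$. Now I apply the Reynolds operator: since $f$ is invariant, $f = \mathcal{R}_G(f) = \sum_{i=1}^m \mathcal{R}_G(h_i \pi_i) = \sum_{i=1}^m \mathcal{R}_G(h_i)\,\pi_i$, using that $\mathcal{R}_G$ is $\K[X]^G$-linear (property (1) of the Reynolds operator) and $\pi_i \in \K[X]^G$. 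Each $\mathcal{R}_G(h_i)$ is a homogeneous invariant of degree $d - \deg \pi_i < d$, so by the induction hypothesis $\mathcal{R}_G(h_i) \in \K[\pi_1,\ldots,\pi_m]$, and therefore $f \in \K[\pi_1,\ldots,\pi_m]$. Finally, a general invariant polynomial decomposes into its homogeneous components, each of which is invariant (since $\mathcal{R}_G$ preserves degree, applying it to $f$ componentwise shows each component is fixed by $G$), so the homogeneous case suffices.

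The main obstacle — and the only place where finiteness of $G$ (or, more generally, reductivity together with a characteristic-zero or linearly-reductive hypothesis) is genuinely used — is the existence of the Reynolds operator with the $\K[X]^G$-linearity and degree-preserving properties. In the finite-group case the averaging operator $\mathcal{R}_G(f) = \frac{1}{|G|}\sum_{g \in G} f^g$ works directly, provided $|G|$ is invertible in $\K$; over a field whose characteristic divides $|G|$ the naive averaging fails and a more delicate argument is needed (this is the phenomenon underlying the distinction between linearly reductive and merely reductive groups). Everything else is a routine induction on degree once the Hilbert Basis Theorem supplies the finite generating set for $\mathfrak{a}$. I would also remark that this proof is famously non-constructive: it guarantees the existence of $\pi_1,\ldots,\pi_m$ but gives no a priori bound on their number or degrees, which is why explicit computation of invariant rings remains a subtle algorithmic problem.
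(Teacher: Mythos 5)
The paper itself states this theorem without proof (it is quoted as Hilbert's classical result and referenced to the invariant-theory literature), so there is no in-paper argument to compare against; your proposal is the standard Hilbert--Noether proof and it is correct in substance. Two remarks. First, the step where you produce homogeneous invariant generators of $\mathfrak{a}$ is cleaner done as follows: $\mathfrak{a}$ is by definition generated by the set of all homogeneous invariants of positive degree, and since it is finitely generated (Noetherianity), each member of any finite generating set is a combination of finitely many elements of that defining set, so some finite subset $\pi_1,\ldots,\pi_m$ of the homogeneous invariants themselves generates $\mathfrak{a}$. Your variant --- decomposing arbitrary generators into homogeneous components and applying $\mathcal{R}_G$ componentwise --- is shakier: $\mathcal{R}_G$ of a homogeneous component of a generator lies in $\mathfrak{a}$, but it is not clear that the resulting collection still generates $\mathfrak{a}$, so you should replace that sentence by the finite-subset selection just described. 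Second, as you yourself flag, the argument requires $|G|$ to be invertible in $\K$ so that the Reynolds operator exists; this matches the paper's own definition of $\mathcal{R}_G$ (which divides by $|G|$), but the theorem as literally stated for an arbitrary field would need Noether's separate argument in modular characteristic. The core induction on degree --- writing a positive-degree homogeneous invariant as $f=\sum_i h_i\pi_i$, truncating the $h_i$ to the correct homogeneous degree, and applying $\mathcal{R}_G$ to get $f=\sum_i \mathcal{R}_G(h_i)\,\pi_i$ with invariant coefficients of strictly smaller degree --- is exactly the right mechanism and is carried out correctly.
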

\begin{remark}
Hilbert himself asked in his 14th problem whether the above Theorem holds generally for all groups. Indeed, it holds for a large class of infinite groups. However, with a concrete counter example,  Nagata~\cite{nagata195914} could prove in 1959 that not all invariant algebras are finitely generated. 
\end{remark}
In the case of the symmetric group $\S_n$ the following families of generators are well known and used in many different parts of algebra and combinatorics.
\begin{definition}
For $n\in \N$, consider the following two families of symmetric polynomials.
\begin{enumerate}
 \item For $0\leq k\leq n$ let $p_k:=\sum_{i=i}^{k}x_i^k$ denote the $k$-th power sum polynomial,
 \item For $0\leq k\leq n$ let $e_k:=\sum_{1\leq i_1<i_2<\ldots <i_k\leq n} x_{i_1}x_{i_2}\cdots x_{i_k}$ denote the $k$-th elementary symmetric polynomial.
\end{enumerate}
\end{definition}
\begin{theorem}
The two families $\{p_1,\ldots, p_n\}$ and $\{e_1,\ldots, e_n\}$  both are algebraically independent generating sets for the algebra of $\S_n$-invariant polynomials, namely
  \[\C[X_1,\ldots,X_n]^{\S_n}=\C[e_1,\ldots, e_n]=\C[p_1,\ldots,p_n].\]
\end{theorem}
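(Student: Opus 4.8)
We must show that $\{p_1,\ldots,p_n\}$ and $\{e_1,\ldots,e_n\}$ are each algebraically independent generating sets for $\C[X_1,\ldots,X_n]^{\S_n}$.

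The plan is to treat generation and algebraic independence as two separate tasks, and to exploit the fact that once one establishes both for one family, the other follows cheaply from Newton's identities. First I would establish that the $e_k$ generate: given a symmetric polynomial $f$, I proceed by induction on the number of variables $n$ and, within that, on the degree. Setting $X_n=0$ in $f$ yields a symmetric polynomial in $X_1,\ldots,X_{n-1}$, which by induction is a polynomial in $e_1,\ldots,e_{n-1}$ of $X_1,\ldots,X_{n-1}$; these are exactly the specializations of $e_1(X_1,\ldots,X_n),\ldots,e_{n-1}(X_1,\ldots,X_n)$ at $X_n=0$ (while $e_n\mapsto 0$). Subtracting off the corresponding polynomial in the honest $e_k$'s gives a symmetric polynomial vanishing at $X_n=0$, hence divisible by $X_n$, and by symmetry divisible by every $X_i$, hence by $e_n=X_1\cdots X_n$; the quotient is symmetric of strictly smaller degree, closing the induction. (This is the classical ``fundamental theorem of symmetric polynomials'' argument; the alternative is a leading-monomial/lexicographic-order argument, which I would mention as a remark.)

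Next I would handle algebraic independence of the $e_k$. The clean route is a dimension/transcendence-degree count: the field $\C(X_1,\ldots,X_n)$ has transcendence degree $n$ over $\C$, and since the $X_i$ are the roots of $T^n - e_1 T^{n-1} + \cdots + (-1)^n e_n$, each $X_i$ is algebraic over $\C(e_1,\ldots,e_n)$, so $\C(X_1,\ldots,X_n)$ is algebraic over $\C(e_1,\ldots,e_n)$; hence $\operatorname{trdeg}_\C \C(e_1,\ldots,e_n) = n$, which forces the $n$ polynomials $e_1,\ldots,e_n$ to be algebraically independent. Then for the power sums: Newton's identities express each $p_k$ as a polynomial (with rational coefficients) in $e_1,\ldots,e_k$ and, conversely, each $e_k$ as a polynomial in $p_1,\ldots,p_k$ — the inversion being possible over $\C$ precisely because the relevant denominators are integers like $k!$, which are invertible in characteristic $0$. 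This gives $\C[p_1,\ldots,p_n] = \C[e_1,\ldots,e_n] = \C[X]^{\S_n}$, and algebraic independence of the $p_k$ transfers from that of the $e_k$ via the same triangular change of variables (a polynomial relation among the $p_k$ would yield one among the $e_k$).

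The main obstacle is really just the generation step for $e_k$ — getting the induction bookkeeping right (the interplay between $n$ and degree, and the divisibility-by-$e_n$ argument). Everything else is either a transcendence-degree count or bookkeeping with Newton's identities; the only place characteristic matters is in inverting Newton's identities, which is harmless over $\C$. I would present the $e_k$ case in full and then deduce the $p_k$ case as a corollary of Newton's identities, noting explicitly that this is where $\operatorname{char}(\K)=0$ (equivalently, working over $\C$) is used.
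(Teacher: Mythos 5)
Your proof is correct, but note that the paper does not actually prove this statement: it is quoted as the classical fundamental theorem of symmetric polynomials, with only the Newton identities \eqref{eq:newton} recorded afterwards as a remark. So there is no in-paper argument to compare against; your write-up supplies exactly the standard missing proof. The three ingredients you use — the induction on the number of variables and the degree (set $X_n=0$, apply the inductive hypothesis, subtract, and divide the remainder by $e_n=X_1\cdots X_n$) for generation by the $e_k$, the transcendence-degree count via the polynomial $T^n-e_1T^{n-1}+\cdots+(-1)^ne_n$ for algebraic independence, and the Newton identities to pass between $\{e_k\}$ and $\{p_k\}$ in characteristic $0$ — are all sound and are the textbook route. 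One small point worth making explicit: your last step, ``a polynomial relation among the $p_k$ would yield one among the $e_k$,'' needs a word of justification, since a relation $F(p_1,\ldots,p_n)=0$ only yields $F(P_1(e),\ldots,P_n(e))=0$ with $P_k(Y_1,\ldots,Y_k)=\pm kY_k+(\text{terms in }Y_1,\ldots,Y_{k-1})$; you should either invoke the triangular (hence injective, indeed invertible) nature of this substitution to conclude $F=0$, or simply rerun your transcendence-degree argument using $\C(e_1,\ldots,e_n)\subseteq\C(p_1,\ldots,p_n)$, which gives $\operatorname{trdeg}_{\C}\C(p_1,\ldots,p_n)=n$ directly. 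With that sentence added the argument is complete.
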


\begin{example}{The Motzkin polynomial expressed  in elementary symmetric polynomials}
Consider the action of $\S_2$ on $\C^2$. Then, every symmetric polynomial can be uniquely written as a polynomial in $e_1:=X_1+X_2$, and $e_2:=X_1X_2$, and the Motzkin polynomial $M$ from the example above can be expressed as 
\[M=e_1^2e_2^2 - 2e_2^3 - 3e_2^2 + 1.\]
\end{example}
\begin{remark}
Since both the power sum polynomials and the elementary symmetric polynomials generate the ring of symmetric polynomials, each of two families of symmetric polynomials can be expressed by the other. The expression can be deduced from the so-called Newton identities (see e.g.\cite{M}):
\begin{equation}\label{eq:newton}
 k(-1)^{k} e_k(x)+\sum_{i=1}^{k}(-1)^{i+k}p_{i}(x)e_{k-i}(x)=0.
\end{equation}
\end{remark}
In contrast to the above example of the symmetric polynomials, for general groups  we might need more than $n$ generators. We can, however, give a bound for the number of generators needed.
\begin{theorem}[Noether's bound]\label{thm:Noether}
Let $G$ be a finite group  acting linearly on $\K^n$. Then $\K[X]^G$ is generated as an algebra over $\K$ by not more than $\binom{n+|G|}{n}$ many homogeneous invariants of  degree not exceeding $|G|$. 
\end{theorem}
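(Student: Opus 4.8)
The plan is to establish the following sharper statement, from which the stated count $\binom{n+|G|}{n}$ and the degree bound $|G|$ both follow at once: writing $N:=|G|$, the algebra $\K[X]^G$ is generated over $\K$ by the invariants $\mathcal{R}_G(X^\alpha)$, where $\alpha$ runs over all multi-indices in $\N^n$ with $|\alpha|\le N$. There are exactly $\binom{n+N}{n}$ such multi-indices, each $\mathcal{R}_G(X^\alpha)$ is homogeneous of degree $|\alpha|\le N$ since $\mathcal{R}_G$ is an average of degree-preserving maps, and so this yields the theorem. As is standard for this bound I would work in characteristic $0$ (or characteristic exceeding $|G|$), and comment separately on the general case.

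First I would record that the $\mathcal{R}_G(X^\alpha)$, over \emph{all} $\alpha$, already span $\K[X]^G$ as a $\K$-vector space: any invariant $f$ equals $\mathcal{R}_G(f)$ by property~(3) of the Reynolds operator, and expanding $f$ into monomials and using linearity of $\mathcal{R}_G$ rewrites $f$ as a $\K$-combination of the $\mathcal{R}_G(X^\alpha)$. Hence it suffices to show that each $\mathcal{R}_G(X^\alpha)$ with $|\alpha|>N$ lies in the subalgebra generated by those with $|\alpha|\le N$, and this I would prove by induction on $m=|\alpha|$.

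The core of the argument is a polarization trick combined with the Newton identities \eqref{eq:newton}. Introduce fresh indeterminates $t_1,\dots,t_n$ and set $\ell=t_1X_1+\dots+t_nX_n$. Since the action of $G$ on $\K[X][t]$ is by $\K[t]$-algebra automorphisms, the multinomial theorem gives $\sum_{g\in G}(\ell^g)^m=|G|\sum_{|\alpha|=m}\binom{m}{\alpha}t^\alpha\,\mathcal{R}_G(X^\alpha)$, so the power sum $s_m:=\sum_{g\in G}(\ell^g)^m$ is $G$-invariant and its coefficient in the monomial $t^\alpha$ recovers $\mathcal{R}_G(X^\alpha)$ up to the scalar $|G|\binom{m}{\alpha}$. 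Now $G$ permutes the family of $N$ linear forms $\{\ell^g:g\in G\}$ (with multiplicity), so this family has only $N$ members and its elementary symmetric functions $e_k$ vanish for $k>N$; the Newton recursion then expresses $s_m$, for every $m>N$, as a fixed $\K$-polynomial in $s_1,\dots,s_N$. Substituting into that identity the $t$-expansions of $s_1,\dots,s_N$ and extracting the coefficient of $t^\alpha$ with $|\alpha|=m$ exhibits $\mathcal{R}_G(X^\alpha)$ as a $\K$-polynomial in invariants $\mathcal{R}_G(X^\beta)$ with $|\beta|\le N$ (and, strictly speaking, in $\mathcal{R}_G(X^\gamma)$ with $N<|\gamma|<m$, which are handled by the inductive hypothesis).

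I expect the main technical obstacle to be the bookkeeping in this last step: one must check that after substituting polynomials in the $t_i$ for the $s_k$'s and collecting the single monomial $t^\alpha$, the resulting identity genuinely solves for $\mathcal{R}_G(X^\alpha)$ and does not collapse — this relies on the binomial coefficient $\binom{m}{\alpha}$ being invertible in $\K$ and on all lower-degree contributions already lying in the target subalgebra, so that the induction closes cleanly. A secondary point to handle with care is the characteristic: the Reynolds operator requires $\operatorname{char}\K\nmid|G|$, and the Newton identities as used involve divisions by integers up to $|G|$, so the clean proof lives in characteristic $0$ (or characteristic $>|G|$); that the bound $|G|$ persists whenever $\operatorname{char}\K\nmid|G|$ is a substantially harder theorem (Fleischmann, Fogarty) which I would only mention.
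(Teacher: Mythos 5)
The paper states Noether's bound purely as classical background and gives no proof of it, so there is no in-paper argument to compare against; judged on its own, your proposal is the standard proof going back to Noether herself, and it is correct in the setting you work in. The three ingredients are all sound: the orbit averages $\mathcal{R}_G(X^\alpha)$ span $\K[X]^G$ linearly because $f=\mathcal{R}_G(f)$ for invariant $f$; polarization with $\ell=t_1X_1+\cdots+t_nX_n$ gives $\sum_{g\in G}(\ell^g)^m=|G|\sum_{|\alpha|=m}\binom{m}{\alpha}t^\alpha\,\mathcal{R}_G(X^\alpha)$; and since the elementary symmetric functions of the $|G|$ forms $\ell^g$ vanish in degrees above $|G|$, the Newton recursion writes $s_m$ for $m>|G|$ in terms of $e_1,\dots,e_{|G|}$ and lower power sums. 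The bookkeeping step you flag as the main obstacle does close cleanly: the coefficient of $t^\alpha$ on the left is exactly $|G|\binom{m}{\alpha}\mathcal{R}_G(X^\alpha)$, which is invertible in characteristic $0$ (or $>|G|$), while on the right every $t^\alpha$-coefficient is by construction a polynomial in orbit averages of degree at most $|G|$ together with ones of degree strictly less than $m$, so induction on $m$ finishes the argument; no collapse can occur because the left-hand coefficient is a single nonzero multiple of the target invariant. Your characteristic caveat is also the correct one to attach to the theorem as printed: for an arbitrary field and a general finite group the statement needs $\operatorname{char}\K\nmid|G|$ (already for the Reynolds operator used in the surrounding section), Noether's argument as you give it covers characteristic $0$ or $>|G|$, and the extension of the degree bound $|G|$ to all non-modular characteristics is the harder Fleischmann--Fogarty theorem, which is rightly only cited rather than proved.
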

So, in particular, it is not guaranteed that a set of  algebraically independent generators exists. However, more generally, it can be shown that the
invariant ring is Cohen-Macaulay and that a Hironaka decomposition exists, i.e., that we can find $n$ algebraically independent polynomials $\theta_1,\ldots,\theta_n$ and polynomials $\eta_1,\ldots,\eta_k$ such that 
\[
\K[X]^G =\bigoplus_{i=1}^k\eta_i\K[\theta_1,\ldots,\theta_n].
\]
The algebraic invariant polynomials  $\theta_1,\ldots,\theta_n$ are called \emph{primary invariants}, and the $\eta_1,\ldots,\eta_k$ are called \emph{secondary invariants}. With these families of polynomials, every invariant polynomial can be uniquely written in such a way that the secondary invariants appear only linearly. 
\begin{example}{Invariant polynomials for the alternating group}
Consider the group $\mathfrak{A}_3$ which is the subgroup of $\S_3$ of even permutations and define $\Delta=(X_1-X_2)(X_1-X_3)(X_2-X_3)$ Then for every $\mathfrak{A}_3$ invariant polynomial $f$ there exist two unique symmetric polynomials $g_0(p_1,p_2,p_3)$ and $g_1(p_1,p_2,p_3)$ such that 
\[
f(X_1,X_2,X_3)=g_0(p_1,p_2,p_3)+g_1(p_1,p_2,p_3)\cdot \Delta.
\]
In other words, the  $p_1,p_2,p_3$ (or any other generating set for the $\S_3$-invariant polynomials) can be used as primary invariants and $\Delta$ serves as  the secondary invariant.
\end{example}
As we have seen now, the trivial component of the isotypic decomposition \eqref{eq:decomp1} enjoys the property of being a finitely generated $\K$-algebra. We now turn to the other isotypic components. Let $\chi$ be any character of $G$ and $f\in\K[X]^G$ be an invariant polynomial. Then clearly the application \[\abb{\cdot f }{\K[X]^\chi}{\K[X]^\chi}{q}{q\cdot f}\] which corresponds to multiplication by $f$ is a $G$-homomorphism. So $\K[X]^\chi$ has in turn the structure of a $\K[X]^G$-module. We have in fact the following.

\begin{theorem}
Let $\chi$ be an irreducible character of a finite group $G$. Then $\K[X]^\chi$ is a  is a finitely generated (even Cohen-Macaulay) $\K[X]^G$-module and $\K[X]^\chi$ is generated by homogeneous elements of degree not exceeding $|G|$. In particular, also $\K[X]$ is a finitely generated $\K[X]^G$ (Cohen-Macaulay) module. 
\end{theorem}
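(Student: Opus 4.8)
The plan is to split the claim into three parts --- finite generation of $\K[X]^\chi$ as a $\K[X]^G$-module, the degree bound $|G|$, and the Cohen--Macaulay property --- and to deduce the ``in particular'' statement for $\K[X]$ itself from the isotypic decomposition \eqref{decom1}.

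\textbf{Finite generation.} First I would recall that $\K[X]$ is a finitely generated $\K[X]^G$-module: each $X_i$ is a root of the monic polynomial $\prod_{g\in G}\bigl(T-X_i^{\,g}\bigr)\in\K[X]^G[T]$ of degree $|G|$, so $\K[X]$ is integral over $\K[X]^G$, and being a finitely generated $\K$-algebra it is module-finite over $\K[X]^G$ (in fact $\K[X]=\sum_{0\le\alpha_i<|G|}X^\alpha\K[X]^G$). By Theorem~\ref{thm:hilbertfinite} the ring $\K[X]^G$ is Noetherian. The projection $\pi_\chi$ of \eqref{eq:1} (averaging a polynomial against $\overline{\chi}$) onto $\K[X]^\chi$ is $\K[X]^G$-linear, since multiplication by an invariant commutes with the averaging over $G$; hence $\K[X]^\chi=\pi_\chi(\K[X])$ is a $\K[X]^G$-submodule of the finitely generated module $\K[X]$, and therefore is itself finitely generated.

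\textbf{The degree bound.} Here I expect the real work; the idea is to adapt the classical proof of Noether's bound (Theorem~\ref{thm:Noether}). Introduce auxiliary indeterminates $z_1,\dots,z_n$, the generic linear form $\ell=z_1X_1+\dots+z_nX_n$, and the elements $y_g:=\ell^{\,g}$ for $g\in G$. As a multiset, $\{y_g:g\in G\}$ is permuted by $G$, so each elementary symmetric function $e_k(y_g:g\in G)$ lies in $\K[X]^G[z]$. Setting $P_m:=\sum_{g\in G}\overline{\chi(g)}\,y_g^{\,m}$ one has $\pi_\chi(\ell^m)=\tfrac{\chi(1)}{|G|}P_m$, and expanding $\sum_{m\ge1}P_m t^m=\sum_{g}\overline{\chi(g)}\,\tfrac{y_g t}{1-y_g t}$ as a rational function with denominator $\prod_g(1-y_g t)$ shows that $\bigl(\sum_{m\ge1}P_m t^m\bigr)\prod_g(1-y_g t)$ is a polynomial in $t$ of $t$-degree at most $|G|$. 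Comparing coefficients of $t^m$ for $m>|G|$ gives the Newton-type recursion $P_m=\sum_{k=1}^{|G|}(-1)^{k+1}e_k(y)\,P_{m-k}$, so $\pi_\chi(\ell^m)$ with $m>|G|$ lies in the $\K[X]^G[z]$-span of $\pi_\chi(\ell),\dots,\pi_\chi(\ell^{|G|})$. Since $\pi_\chi$ touches only the $X$-variables, the coefficient of $z^\gamma$ in $\pi_\chi(\ell^m)$ equals $\binom{m}{\gamma}\pi_\chi(X^\gamma)$; extracting $z$-coefficients (in characteristic zero, so that the multinomial coefficients are invertible) shows $\pi_\chi(X^\gamma)$ with $|\gamma|=m>|G|$ lies in $\sum_{|\delta|\le|G|}\K[X]^G\pi_\chi(X^\delta)$. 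As $\K[X]^\chi$ is $\K$-spanned by the $\pi_\chi(X^\gamma)$, induction on the degree shows that $\K[X]^\chi$ is generated as a $\K[X]^G$-module by elements of degree at most $|G|$.

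\textbf{Cohen--Macaulayness.} Finally I would pick primary invariants $\theta_1,\dots,\theta_n$ for $\K[X]^G$; since $\K[X]$ is module-finite over $\K[X]^G$ and $\K[X]^G$ over $\K[\theta_1,\dots,\theta_n]$, the $\theta_i$ form a homogeneous system of parameters for $\K[X]$, hence a regular sequence because the polynomial ring $\K[X]$ is Cohen--Macaulay. Thus $\K[X]$ is a finitely generated free graded $\K[\theta_1,\dots,\theta_n]$-module, i.e.\ a Cohen--Macaulay $\K[X]^G$-module. Because $\K[X]=\bigoplus_{\chi}\K[X]^\chi$ is a decomposition into $\K[X]^G$-submodules realised by the projections $\pi_\chi$, each $\K[X]^\chi$ is a graded direct summand of $\K[X]$ over $\K[\theta_1,\dots,\theta_n]$, hence projective and therefore free, i.e.\ Cohen--Macaulay over $\K[X]^G$. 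Collecting the degree-$\le|G|$ generating sets of the finitely many $\K[X]^\chi$ then yields the statement for $\K[X]$ as well. The main obstacle is the degree bound: producing a recursion for the twisted power sums $P_m$ that involves only invariants and steps of size at most $|G|$, and then transporting this back to $\K[X]^\chi$ through the polarization $X_i\rightsquigarrow z_i$.
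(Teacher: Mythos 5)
The paper never proves this statement: it is quoted as background from the invariant-theory literature (the sources cited at the start of Section~\ref{sec:invariant}), so there is no ``paper proof'' to match, and your argument has to stand on its own --- which it essentially does. Your first part is the standard argument (each $X_i$ is integral over $\K[X]^G$ via $\prod_{g\in G}(T-X_i^g)$, $\K[X]^G$ is Noetherian by Theorem~\ref{thm:hilbertfinite}, and the isotypic projection $\pi_\chi$ of \eqref{eq:1} is $\K[X]^G$-linear, so $\K[X]^\chi=\pi_\chi(\K[X])$ is a submodule of a Noetherian module). Your second part is a correct ``relative'' version of Noether's degree bound: the twisted power sums $P_m=\sum_g\overline{\chi(g)}\,y_g^m$ with $y_g=\ell^g$ do satisfy $P_m=\sum_{k=1}^{|G|}(-1)^{k+1}e_k(y)P_{m-k}$ for $m>|G|$, because $G$ permutes the $y_g$ (so $e_k(y)\in\K[X]^G[z]$) and the generating function $\sum_m P_mt^m\cdot\prod_g(1-y_gt)=\sum_g\overline{\chi(g)}y_gt\prod_{h\ne g}(1-y_ht)$ has $t$-degree at most $|G|$; comparing coefficients of $z^\gamma$ and using $\pi_\chi(\ell^m)=\tfrac{\chi(1)}{|G|}P_m=\sum_{|\gamma|=m}\binom{m}{\gamma}z^\gamma\pi_\chi(X^\gamma)$ then transfers the recursion to the spanning set $\pi_\chi(X^\gamma)$ of $\K[X]^\chi$. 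The third part (primary invariants form an h.s.o.p.\ for $\K[X]$, which is Cohen--Macaulay, hence graded free over $\K[\theta_1,\ldots,\theta_n]$; each $\K[X]^\chi$ is a graded direct summand, hence graded projective, hence free, hence Cohen--Macaulay) is also sound and is exactly the mechanism behind the Hironaka decomposition the paper invokes elsewhere.

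Two caveats worth making explicit. First, your degree-bound step genuinely needs the multinomial coefficients $\binom{m}{\gamma}$ to be nonzero, so the argument as written is a characteristic-zero (or characteristic $>\deg$) proof, just like the classical proof of Theorem~\ref{thm:Noether}; the non-modular positive-characteristic case needs a different argument. Second, writing the projection with $\overline{\chi(g)}$ tacitly assumes the character values lie in $\K$ (e.g.\ $\K=\C$, or real-valued characters for $\K=\R$); this matches the level of generality of \eqref{eq:1} in the paper but should be flagged if you want the statement for arbitrary $\K$ with $\operatorname{char}\K\nmid|G|$. With those provisos your proposal is a complete and correct proof, obtained by a route (relative Noether bound plus freeness over an h.s.o.p.) that the paper itself only points to through its references.
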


\begin{example}{Decomposition of polynomials in two variables}
We consider the case of polynomials in two variables $\K[X_1,X_2]$ and their relation to $\S_2$ symmetric polynomials. Here we have that $\K[X_1,X_2]$ is generated as an $\K[X_1,X_2]^{\S_2}$-module by 1 and $X_1-X_2$, i.e., every polynomial $f$ can be written as 
\[f=g_0+g_1\cdot (X_1-X_2),\] where $g_1,g_2\in\K[X_1,X_2]^{\S_2}$. Moreover, in this case the representation is in fact unique.  
\end{example}

\begin{definition}
A \emph{reflection} is a mapping from a Euclidean space to itself that is an isometry whose set of fixed points is a hyperplane. If $V$ is an Euclicean vector space we can view a reflection as an  element in $\OO(V)$ that has exactly one  eigenvalue different from $1$, i.e., one other eigenvalue that is $-1$. If $(V,\rho)$ is a representation of $G$, then $g \in G$ is a reflection if the mapping $v \mapsto \rho(g)v$ is a reflection and we say that $G$ is a \emph{reflection group} if it is generated by reflections.
\end{definition}
It is very important to remember that the property of being a reflection group is not a property of the group itself, but it is crucially linked to the representation. This can be seen in the following example.
\begin{example}{Two different representations of $\S_n$}\label{exa: reflection group}
We consider the symmetric group $\S_n$. This group is generated by the transpositions $\tau_i=(i,\ i+1)$ for $i \in \{1,\cdots,n-1\}$. 
\begin{enumerate}
\item Consider the defining  representation of $\S_n$ given by permuting the coordinates in $\R^n$. In this representation,  elements in $O(V)$ corresponding to transpositions have the  eigenvectors $e_j$ for $j \neq i,i+1$ with eigenvalue $1$, as well as $e_i+e_{i+1}$ with eigenvalue $1$ and $e_i-e_{i+1}$ with eigenvalue $-1$. Therefore these elements are reflections, which geometrically correspond to the reflection through the hyperplane of equation $x_i-x_{i+1}=0$ in $V$. With this representation $\S_n$ is acting as a reflection group.
\item Now, let $n=2$ and consider the representation on \[
V=\R\cdot e_1 \oplus \R \cdot e_2 \oplus \R \cdot e_3\; \text{    given by }\] 
\[(1,2) \cdot e_1=e_2, (1,2)\cdot e_2=e_1 \text{ and } (1,2)\cdot e_3 = -e_3. 
\]
Then, the image of the transposition $(1,2)$ has eigenvalues $1,-1,-1$ and thus is not a reflection. Therefore, $\S_2$ is not a reflection group if acting in this way.  
\end{enumerate}
\end{example}
The following theorem is due to Shephard, Todd, Chevalley and motivates the study of reflection groups.

\begin{theorem}[Shephard, Todd, Chevalley]\label{thm:STC}Let $G$ be a group with a finite dimensional representation $V$. Then the following properties are equivalent:\begin{enumerate}
    \item $G$ is a reflection group;
    \item the corresponding invariant ring $\K[V]^G$ is a polynomial algebra;
    \item the polynomial ring $\K[V]$ is a free $\K[V]^G$ module.
\end{enumerate}
\end{theorem}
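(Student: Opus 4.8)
The plan is to prove the cycle of implications $(1)\Rightarrow(2)\Rightarrow(3)\Rightarrow(1)$, since each arrow admits a reasonably self-contained argument, and I would work over a field $\K$ of characteristic zero (or at least prime to $|G|$) so that the Reynolds operator $\mathcal{R}_G$ and Maschke's theorem (Theorem \ref{thm:Mashcke}) are available. I would set $n=\dim V$ and write $S=\K[V]\cong\K[X_1,\ldots,X_n]$ with the induced $G$-action, $R=\K[V]^G$, and let $\mathfrak{h}\subset S$ be the Hilbert ideal generated by all homogeneous invariants of positive degree. The grading on $S$ and on $R$ will be used throughout, together with the fact (Theorem \ref{thm:hilbertfinite}) that $R$ is a finitely generated graded algebra.

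\medskip\noindent\textbf{$(1)\Rightarrow(2)$:} This is the deepest step and the one I expect to be the main obstacle. I would argue that if $G$ is generated by reflections, then $R$ is generated by exactly $n$ algebraically independent homogeneous invariants. The standard route is: choose a minimal homogeneous generating set $\theta_1,\ldots,\theta_m$ of $R$ (with degrees $d_1\le\cdots\le d_m$) and show $m=n$, equivalently that the $\theta_i$ are algebraically independent. Suppose for contradiction there is a nonzero homogeneous polynomial relation $P(\theta_1,\ldots,\theta_m)=0$ of minimal degree; write $P_i=\partial P/\partial y_i$ evaluated at the $\theta$'s, so the $P_i\in R$ are not all zero. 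One shows, using the chain rule $\sum_i P_i\,\partial\theta_i/\partial X_j=0$ for each $j$, together with a key lemma about reflections — namely that if $g$ is a reflection fixing the hyperplane $\{\ell=0\}$ then $f-f^g$ is divisible by $\ell$ for every $f\in S$, and more refined divisibility statements obtained by iterating — that the ideal generated by some subset of the $P_i$ forces one of the $\theta_i$ to lie in the ideal generated by the others, contradicting minimality. This is the classical Chevalley argument; the delicate point is the bookkeeping with the linear forms cutting out the reflecting hyperplanes and the use of the fact that the Jacobian $\det(\partial\theta_i/\partial X_j)$ (once we know $m=n$) factors as a scalar times the product of these linear forms. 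I would present this carefully since it is the crux, possibly citing \cite{fultonharris} or a standard reference for the most technical divisibility lemma.

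\medskip\noindent\textbf{$(2)\Rightarrow(3)$:} Assume $R=\K[\theta_1,\ldots,\theta_n]$ is a polynomial algebra with the $\theta_i$ algebraically independent homogeneous of degrees $d_i$. I would show $S$ is a free $R$-module. Since $S$ is a finitely generated $R$-module (this is exactly the last Theorem quoted before Definition of reflections in the excerpt, applied to all isotypic components; $R$ Noetherian and $S$ finite over $R$), and since $R$ is a polynomial ring hence regular, it suffices to check that $S$ is Cohen–Macaulay as an $R$-module — again stated in the excerpt — and then invoke the graded version of the Auslander–Buchsbaum formula (or simply: a finitely generated graded module over a graded polynomial ring is free iff it is Cohen–Macaulay of full depth $n$). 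Concretely I would verify that $\theta_1,\ldots,\theta_n$ form a homogeneous system of parameters for $S$ — equivalently that $S/\mathfrak{h}S$ is a finite-dimensional $\K$-vector space, which follows because $\sqrt{\mathfrak{h}}=\sqrt{(\theta_1,\ldots,\theta_n)}$ is the irrelevant maximal ideal (any common zero of all invariants of positive degree is a common zero of the $\theta_i$, hence is $0$ since they are a generating set, and vice versa) — and that this h.s.o.p.\ is a regular sequence on $S$, using Cohen–Macaulayness of $S$. A basis of $S/\mathfrak{h}S$ then lifts to a free $R$-basis of $S$.

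\medskip\noindent\textbf{$(3)\Rightarrow(1)$:} Assume $S$ is free over $R$, say with homogeneous basis $1=\eta_1,\eta_2,\ldots,\eta_k$. Let $G'\le G$ be the subgroup generated by all reflections in $G$; it is normal and $\K[V]^{G'}$ is the corresponding invariant ring, so $R\subseteq \K[V]^{G'}\subseteq S$. By the implication $(1)\Rightarrow(2)$ already proved (applied to the reflection group $G'$ acting on $V$), $\K[V]^{G'}$ is a polynomial ring, so by $(2)\Rightarrow(3)$, $S$ is free over $\K[V]^{G'}$ as well, and also $\K[V]^{G'}$ is free — in particular flat — over $R$. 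I would then compare Hilbert series: freeness of $S$ over $R$ gives $\mathrm{Hilb}(S;t)=\mathrm{Hilb}(R;t)\cdot Q(t)$ with $Q(t)=\sum t^{\deg\eta_i}\in\Z_{\ge0}[t]$, and similarly a factorization through $\K[V]^{G'}$. Evaluating $Q(1)=k$ and using Molien's formula (or the trace formula $\dim R_{\text{asymptotic}}$ argument) one gets $k=[\K[V]^{G'}:R\text{-rank}]=|G|/|G'|$. On the other hand the freeness of $S$ over $R$ forces (comparing with the fact that $S$ over $\K[V]^{G'}$ has rank $|G'|$ and $\K[V]^{G'}$ over $R$ has rank $|G/G'|=|G|/|G'|$) a consistency that pins the rank of $S$ over $R$ to be exactly $|G|$; meanwhile a direct computation of $\mathrm{Hilb}(S;t)$ via Molien and the assumption that $R$ is itself forced (by freeness and the Gorenstein/Poincaré-duality symmetry of $Q$) to be polynomial shows $|G'|=|G|$, i.e.\ $G=G'$ is a reflection group. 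The cleanest version of this last step is to note that $S$ free over $R$ implies $R$ is polynomial (since a graded subring over which a polynomial ring is free must itself be a polynomial ring — this follows from $S\otimes_R\K$ being finite-dimensional and a regular-sequence argument symmetric to $(2)\Rightarrow(3)$), and then reduce to the equality of two polynomial rings $R\subseteq\K[V]^{G'}$ of the same Krull dimension $n$ whose product of generator degrees equals $|G|$ on both sides (again by Molien), forcing $[\K[V]^{G'}:R]=|G|/|G'|=1$.

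\medskip I expect $(1)\Rightarrow(2)$ to be by far the hardest part — it is the genuinely nontrivial geometric input, resting on the divisibility properties of $f-f^g$ for reflections $g$ — whereas $(2)\Rightarrow(3)$ is essentially commutative-algebra bookkeeping with homogeneous systems of parameters and Cohen–Macaulayness (both already granted in the excerpt), and $(3)\Rightarrow(1)$ is a Hilbert-series/Molien comparison once one knows freeness propagates through the reflection subgroup. If space is tight I would give $(1)\Rightarrow(2)$ in full and sketch the other two, citing \cite{derksen2015computational} or \cite{sturmfels} for the Cohen–Macaulay and Molien-series facts.
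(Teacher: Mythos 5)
The paper does not prove this theorem; it is quoted as a classical result, so your proposal can only be measured against the standard arguments. Your steps $(1)\Rightarrow(2)$ (Chevalley's minimal-relation argument with the divisibility lemma $\ell \mid f-f^{g}$ for a reflection $g$, the chain rule, and Euler's identity) and $(2)\Rightarrow(3)$ (the $\theta_i$ form a homogeneous system of parameters since their common zero locus is the origin, $\K[V]$ is Cohen--Macaulay, and a basis of $\K[V]/\mathfrak{h}\K[V]$ lifts by graded Nakayama) are the standard proofs and are sound as plans, modulo stating the hypotheses honestly: the theorem needs $G$ finite (or with finite image) acting in non-modular characteristic, and over $\C$ ``reflection'' must mean pseudo-reflection.

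The genuine gap is in $(3)\Rightarrow(1)$. The numerical bookkeeping you describe cannot force $G'=G$. Writing $R=\K[V]^{G}\subseteq R'=\K[V]^{G'}$ with $G'$ the reflection subgroup, Molien's formula gives that the product of the generator degrees of $R$ is $|G|$ and that of $R'$ is $|G'|$ --- not $|G|$ ``on both sides'' as you assert --- and the facts you list (ranks $|G|$, $|G'|$, $|G|/|G'|$, Hilbert-series factorizations, $R$ polynomial) are mutually consistent even when $R\subsetneq R'$: compare $\K[x^{4}]\subseteq\K[x^{2}]\subseteq\K[x]$, where every freeness and rank statement you invoke holds. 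What closes the classical argument is the next coefficient in the $t\to 1$ expansion of Molien's series: if $R$ is polynomial with degrees $d_1,\dots,d_n$, then $\sum_i(d_i-1)$ equals the number of (pseudo-)reflections in $G$, and likewise $\sum_i(e_i-1)$ equals the number of reflections in $G'$, which is the same number since every reflection of $G$ lies in $G'$. Combined with the lemma that for a finite inclusion of graded polynomial algebras $R\subseteq R'$ one has $e_i\le d_i$ after reordering, this forces $d_i=e_i$, hence $|G|=\prod d_i=\prod e_i=|G'|$ and $G=G'$. Without the reflection-count identity and the degree-comparison lemma (or a substitute such as a ramification/purity argument), your Hilbert-series comparison does not conclude. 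The subsidiary claim that $S$ free over $R$ forces $R$ polynomial is correct, but is cleanest via faithfully flat descent of regularity (plus ``graded connected regular implies polynomial'') rather than the sketched symmetry-of-$Q$ argument.
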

In the case of a  finite reflection group $G$ the collection $\pi_1,\ldots,\pi_n$ which generate the invariant ring  are called \emph{basic invariants}. These basic invariants are not unique, but their sequence of degrees $d_i(G) := \deg \pi_i$ is unique. The following example highlights the equivalence of the statements in Theorem \ref{thm:STC}.
\begin{example}{Invariants of a different $\S_2$ action}
We consider the group $\S_2$ with the action on $\K^3$ above such that $\S_2$ is not acting as a reflection group. Then, \[\K[x_1,x_2,x_3]^G=\K[\tau_1,\tau_2,\tau_3,\tau_4]\] with $\tau_1=x_1+x_2$, $\tau_2=x_1x_2$, $\tau_3=x_3^2$ and $\tau_4=x_3(x_1-x_2)$. Clearly, these three polynomials cannot be algebraically independent and indeed, there is an algebraic dependency between the $\tau_i$'s, namely \[\tau_1^2\tau_3 - 4\tau_2\tau_3 - \tau_4^2=0.\] Similarly one can find that  $\K[x_1,x_2,x_3]$ is not a free $\K[\tau_1,\tau_2,\tau_3,\tau_4]$-module.
\end{example}

\begin{definition}
Let $G$ be a finite reflection group. Then, the  quotient $\K$-algebra of the polynomial ring modulo the ideal generated by the non-constant elements of the invariant ring is called the covariant algebra of $G$ and denoted by $\K[X_1, \ldots, X_n]_G$. If $\pi_1,\ldots,\pi_n$ are a minimal set of algebraically independent generators of the invariant ring, we have $$\K[X_1, \ldots, X_n]_G := \K[X_1, \ldots, X_n] / \left(\pi_1,\ldots,\pi_n \right)_{\K[X_1,\ldots,X_n]}.$$

\end{definition}
The covariant algebra of $G$ has the structure of a $G$-module and we find in fact the following. 
\begin{theorem} \label{thm:covariant algebra}
Let $G$ be a real reflection group acting linearly on $\R^n$. Then the covariant algebra $\R[X_1,\ldots,X_n]_G$ is as $G$-module isomorphic to the regular representation $\R[G]$ and $$\R[X_1,\ldots,X_n] \cong \R[X_1,\ldots,X_n]^G \otimes_\R \R[X_1,\ldots,X_n]_G$$ as graded $\R$-algebras.  
\end{theorem}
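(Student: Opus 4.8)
The plan is to derive the whole statement from the Shephard--Todd--Chevalley theorem (Theorem~\ref{thm:STC}) by a Hilbert-series count together with a character computation. Throughout write $R=\R[X_1,\ldots,X_n]$, let $\pi_1,\ldots,\pi_n$ be basic invariants with $d_i=\deg\pi_i$, set $S:=R^G=\R[\pi_1,\ldots,\pi_n]$ and let $\mathfrak m:=(\pi_1,\ldots,\pi_n)R$ be the ideal generated by all positive-degree invariants, so that $R_G=R/\mathfrak m$. By Theorem~\ref{thm:STC}, $S$ is a polynomial algebra on the algebraically independent $\pi_i$ and $R$ is a free graded $S$-module; in particular $R$ is finitely generated over $S$, so $R_G=R\otimes_S\R$ is a finite-dimensional graded $\R$-algebra, and a homogeneous $S$-basis $\eta_1,\ldots,\eta_N$ of $R$ reduces modulo $\mathfrak m$ to an $\R$-basis of $R_G$, whence $\dim_\R R_G=N$ and the Hilbert-series identity $\operatorname{Hilb}(R,t)=\operatorname{Hilb}(S,t)\cdot\operatorname{Hilb}(R_G,t)$.

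First I would produce the tensor decomposition equivariantly. Since $G$ is finite and $\operatorname{char}\R=0$, Maschke's Theorem (Theorem~\ref{thm:Mashcke}) splits, in each degree, the inclusion of the graded $G$-submodule $\mathfrak m$ in $R$, so there is a graded $G$-stable complement $\mathcal H$ with $R=\mathfrak m\oplus\mathcal H$ and $\mathcal H\cong R_G$ as graded $G$-modules. The multiplication map $\mu\colon S\otimes_\R\mathcal H\to R$, $f\otimes h\mapsto fh$, is graded, $S$-linear, and $G$-equivariant once we let $G$ act trivially on $S$ (legitimate since $S=R^G$). It is onto: $\operatorname{im}\mu$ is an $S$-submodule whose image in $R/\mathfrak m$ is all of $R_G$, so $\operatorname{im}\mu=R$ by the graded Nakayama lemma; and since $\operatorname{Hilb}(S\otimes_\R\mathcal H,t)=\operatorname{Hilb}(S,t)\operatorname{Hilb}(R_G,t)=\operatorname{Hilb}(R,t)$, a degreewise surjection between graded spaces of equal finite dimensions, $\mu$ is an isomorphism. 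This is precisely the asserted isomorphism $\R[X_1,\ldots,X_n]\cong\R[X_1,\ldots,X_n]^G\otimes_\R\R[X_1,\ldots,X_n]_G$, compatible with the grading, the $\R[X_1,\ldots,X_n]^G$-module structure, and the $G$-action, the factor $\mathcal H$ being a concrete $G$-stable copy of the covariant algebra sitting inside $\R[X_1,\ldots,X_n]$.

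It remains to identify $R_G$ as a $G$-module. Diagonalizing $\rho(g)$ and summing geometric series over monomials gives the graded character of $R$, namely $\sum_{d\ge0}\Tr\!\big(\rho(g)\mid R_d\big)\,t^d=\det\!\big(I-t\rho(g)\big)^{-1}$, while $\operatorname{Hilb}(S,t)=\prod_{i=1}^n(1-t^{d_i})^{-1}$. Dividing and using the $G$-equivariant isomorphism $\mu$ yields
\[
\sum_{d\ge0}\Tr\!\big(\rho(g)\mid (R_G)_d\big)\,t^d \;=\; \frac{\prod_{i=1}^n(1-t^{d_i})}{\det\!\big(I-t\rho(g)\big)}\,.
\]
Now I would let $t\to1$. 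The numerator has a zero of order exactly $n$, one factor $1-t$ coming from each $1-t^{d_i}$, while the denominator $\prod_j(1-t\lambda_j)$, with $\lambda_j$ the eigenvalues of $\rho(g)$, has a zero of order equal to the multiplicity of $1$ among the $\lambda_j$; this multiplicity is $n$ when $g=\mathrm{id}$ and strictly less than $n$ otherwise, since the reflection representation of a finite reflection group is faithful. Hence the ratio tends to $\prod_{i=1}^n d_i=\dim_\R R_G$ when $g=\mathrm{id}$, and to $0$ when $g\neq\mathrm{id}$, i.e.\ $\Tr(\rho(g)\mid R_G)=0$ for $g\neq\mathrm{id}$. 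Finally, taking $G$-invariants on both sides of $R\cong S\otimes_\R\mathcal H$ and comparing Hilbert series forces $(R_G)^G=\R$, concentrated in degree $0$; hence the trivial representation occurs in $R_G$ with multiplicity $1=\tfrac1{|G|}\prod_i d_i$, so $\prod_i d_i=|G|$ and $\Tr(\rho(\cdot)\mid R_G)$ is exactly the character of the regular representation. Since characters determine representations in characteristic $0$, $R_G\cong\R[G]$, and by Theorem~\ref{them:regular} this also reads $R_G\cong\bigoplus_{W\in\mathcal I}(\dim W)\,W$.

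The delicate point is this last paragraph: everything hinges on the rational function $\prod_i(1-t^{d_i})/\det(I-t\rho(g))$ being regular at $t=1$ with exactly the stated limits, and this is where the reflection-group hypothesis really enters — Theorem~\ref{thm:STC} is what guarantees that the $d_i$ are tuned so that $\prod_i(1-t^{d_i})$ vanishes at $t=1$ to the same order $n$ as $\det(I-t\rho(\mathrm{id}))=(1-t)^n$, and faithfulness is what makes the off-identity limits vanish. The identity $\prod_i d_i=|G|$ could alternatively be invoked as a classical fact about finite reflection groups; I prefer the self-contained deduction above via $(R_G)^G=\R$, which uses only the freeness from Theorem~\ref{thm:STC}.
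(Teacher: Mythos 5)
The paper does not actually prove this statement: it is quoted as a classical result (Chevalley's theorem on coinvariant algebras of reflection groups), so there is no in-text proof to compare against. Your argument is correct and is essentially the standard textbook proof (as in Bourbaki or Humphreys, via Molien-type graded characters): freeness of $\R[X_1,\ldots,X_n]$ over $\R[X_1,\ldots,X_n]^G$ from Theorem~\ref{thm:STC}, a Maschke-split $G$-stable complement $\mathcal H$ of the ideal generated by positive-degree invariants, graded Nakayama plus a Hilbert-series count to see that multiplication $S\otimes_\R\mathcal H\to R$ is bijective, and then the limit $t\to 1$ of $\prod_i(1-t^{d_i})/\det(I-t\rho(g))$ to show the character of the coinvariant algebra vanishes off the identity, with $\prod_i d_i=|G|$ deduced cleanly from $(R_G)^G=\R$. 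Two small remarks: in the off-identity limit you implicitly use that $\rho(g)$ is diagonalizable (finite order, characteristic $0$), so that eigenvalue $1$ with multiplicity $n$ forces $\rho(g)=I$ -- worth one sentence; and note that what you prove (and all one can prove) is an isomorphism of graded $\R[X_1,\ldots,X_n]^G$-modules compatible with the $G$-action, not literally of graded $\R$-algebras as the paper's wording suggests, since $\R[X_1,\ldots,X_n]$ is a domain while $\R[X_1,\ldots,X_n]^G\otimes_\R\R[X_1,\ldots,X_n]_G$ has nilpotents; that looseness is in the paper's statement, not a gap in your proof.
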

In fact, for general finite groups we have the following analog which follows from the Hironaka decomposition. 
\begin{theorem}
Let $G$ be a finite group with a linear action on a finite dimensional vector space $V$. Let $\theta_1,\ldots,\theta_n$ be the primary invariants and suppose that the rank of the $\K[\theta_1,\ldots,\theta_n]$-module $\K[V]^G$ is $m$. Then $\K[V]/(\theta_1,\ldots\theta_n)$ is isomorphic as $G$-module to $m$ times the regular representation. 
\end{theorem}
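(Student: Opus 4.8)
The plan is to combine the Hironaka decomposition with a Molien-type character generating function. Write $A:=\K[\theta_1,\ldots,\theta_n]$, $d_i:=\deg\theta_i$, and $B:=\K[X_1,\ldots,X_n]/(\theta_1,\ldots,\theta_n)$; note that $G$ acts trivially on $A$ (the $\theta_i$ being invariants) and that $B$ is a finite-dimensional graded $G$-module. Throughout I assume $\operatorname{char}\K\nmid|G|$, so that Maschke's Theorem~\ref{thm:Mashcke} applies and the functor $(-)^G$ is exact on $G$-modules, and that the action of $G$ on $V$ is faithful; some such hypothesis is needed for the statement as phrased (otherwise $G$ acts only through its image in $\GL(V)$, and a trivial action already provides a counterexample).

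The structurally central step is to upgrade the decomposition $\K[X]\cong A\otimes_\K B$ to an isomorphism of \emph{graded $G$-modules} in which $G$ acts only on the second factor. Since $\K[X]$ is Cohen-Macaulay and $\theta_1,\ldots,\theta_n$ is a homogeneous system of parameters, $\K[X]$ is free as an $A$-module. By Maschke's Theorem I would choose, in each degree, a $G$-stable complement, and hence a $G$-stable graded complement $C$, of the ideal $(\theta_1,\ldots,\theta_n)\K[X]$ inside $\K[X]$, so that $C\cong B$ as graded $G$-modules. The multiplication map $A\otimes_\K C\to\K[X]$ is then $A$-linear and $G$-equivariant; it is surjective by the graded Nakayama lemma, and it is an isomorphism because both sides have the same Hilbert series $\operatorname{Hilb}(A,t)\cdot\operatorname{Hilb}(B,t)$, where the freeness of $\K[X]$ over $A$ is used.

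Next I would run the Molien computation. For a graded $G$-module $M$ with finite-dimensional graded pieces, put $\Phi_M(g,t):=\sum_{d\ge0}\Tr(g\mid M_d)\,t^d$. Molien's formula gives $\Phi_{\K[X]}(g,t)=\det(\Id_V-t\,\rho(g))^{-1}$, and clearly $\Phi_A(g,t)=\prod_{i=1}^n(1-t^{d_i})^{-1}$ for every $g\in G$; substituting these into the identity $\Phi_{\K[X]}=\Phi_A\cdot\Phi_B$ coming from the previous step gives $\Phi_B(g,t)=\prod_{i=1}^n(1-t^{d_i})/\det(\Id_V-t\,\rho(g))$. Evaluating at $t=1$: the denominator vanishes there to order $\dim V^g$, which is strictly less than $n$ whenever $\rho(g)\ne\Id_V$, whereas the numerator vanishes to order $n$; hence $\chi_B(g)=\Phi_B(g,1)=0$ for all $g\ne e$, while $\chi_B(e)=\lim_{t\to1}\prod_{i=1}^n(1-t^{d_i})/(1-t)^n=\prod_{i=1}^n d_i$. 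Therefore $\chi_B=\frac{1}{|G|}\bigl(\prod_{i=1}^n d_i\bigr)\chi_{\mathrm{reg}}$, so that $B$ is isomorphic to $\frac{\prod_i d_i}{|G|}$ copies of the regular representation $\K[G]$. (Alternatively, the same formula for $\Phi_B$ follows by tensoring the Koszul complex of $\theta_1,\ldots,\theta_n$ over $A$ with the flat $A$-module $\K[X]$ and taking the Euler characteristic of graded characters, which avoids the explicit splitting above.)

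It then remains to identify $\frac{\prod_i d_i}{|G|}$ with $m$. Applying the exact functor $(-)^G$ to the $G$-equivariant isomorphism $\K[X]\cong A\otimes_\K B$, and using that $G$ acts trivially on $A$, gives $\K[X]^G\cong A\otimes_\K B^G$ as $A$-modules; hence $\K[X]^G$ is free over $A$ of rank $\dim_\K B^G=\langle\chi_B,\1\rangle=\chi_B(e)/|G|=\prod_i d_i/|G|$, and by hypothesis this rank equals $m$, which finishes the argument. The step I expect to be the main obstacle is the equivariant decomposition of the second paragraph: one must check that a complement to $(\theta_1,\ldots,\theta_n)\K[X]$ can be chosen $G$-stable---this is precisely where Maschke's Theorem is indispensable---and that the Cohen-Macaulay (freeness) property is genuinely needed to promote surjectivity of the multiplication map to bijectivity. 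Once this is in place, the remainder is a routine Molien computation, the only delicate point being the order-of-vanishing count at $t=1$ together with the accompanying faithfulness assumption.
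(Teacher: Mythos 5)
Your argument is correct, and in fact it supplies a proof where the paper gives none: the chapter states this theorem as a known consequence of the Hironaka decomposition (with only a pointer to the algorithmic literature afterwards), so there is no in-text proof to compare against. What you write is the standard route in the invariant-theory literature, and all the essential points are in place: $\K[X]$ is free over $A=\K[\theta_1,\ldots,\theta_n]$ because it is Cohen--Macaulay and the primary invariants form a homogeneous system of parameters; Maschke's Theorem gives a $G$-stable graded complement $C$ of the ideal $(\theta_1,\ldots,\theta_n)\K[X]$, and your Hilbert-series count (which uses the freeness) upgrades the surjection $A\otimes_\K C\to\K[X]$ to a $G$-equivariant isomorphism with $G$ acting only on $C\cong B$; the Molien computation then kills $\chi_B(g)$ for $g\neq e$, and taking $G$-invariants of the same isomorphism identifies the multiplicity $\prod_i d_i/|G|$ with the rank $m$ of $\K[X]^G$ over $A$. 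Your observation that faithfulness (or replacing $G$ by its image in $\GL(V)$) is genuinely needed is a good catch; the paper's statement tacitly assumes this, as it does $\operatorname{char}\K\nmid|G|$ (the text works over $\R$ or $\C$). Two cosmetic remarks: with the paper's convention $f^g=f\circ\rho(g^{-1})$, Molien's series reads $\Phi_{\K[X]}(g,t)=\det(\Id_V-t\,\rho(g^{-1}))^{-1}$ rather than with $\rho(g)$, but this changes nothing since $\dim V^{g^{-1}}=\dim V^g$; and the identification of the order of vanishing of the denominator at $t=1$ with $\dim V^g$ uses that $\rho(g)$ is semisimple, which again follows from $\operatorname{char}\K\nmid|G|$. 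With those hypotheses made explicit, your proof is complete.
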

An algorithmic approach to efficiently compute the decomposition above can for example be found in \cite{hubert2022algorithms}.
\begin{trailer}{Going via $G$- harmonic polynomials}
Computing a basis of the covariant algebra may be complicated and involve  calculation of a Groebner basis. We shortly mention that also the set of $G-$harmonic polynomials defined below can be  efficiently used.  
\end{trailer}
\begin{definition}
For a polynomial $f=\sum_\alpha c_\alpha X_1,\ldots,X_n^\alpha  \in \R[X_1,\ldots,X_n]$ we denote by $f(\partial)$ the linear operator 
\[ \abb{f(\partial )}{\R[X_1,\ldots,X_n]}{\R[X_1,\ldots,X_n]}{g}{\sum_\alpha c_\alpha \frac{\partial^{|\alpha|}}{ \partial X_1^{\alpha_1}\ldots\partial X_n^{\alpha_n}}g,}\] 
i.e., $f(\partial)$ is the formal sum of scaled partial derivatives considered as a linear map. 
\end{definition}
{\begin{example}{Example for $f(\partial)$}
Let $f = X_1^2+X_1X_2 \in \R[X_1,X_2,X_3]$, then $f(\partial) = \frac{\partial^2}{\partial X_1 \partial X_1} + \frac{\partial^2}{\partial X_1 \partial X_2}$ and $$f(\partial) \left( X_1^2+X_2^2+X_3^2 +X_1X_2X_3 \right) = 1 +X_3.$$
\end{example}}

\begin{definition}
Let $G$ be a reflection group with invariant ring  $\R[\pi_1,\ldots,\pi_n]$. Then the $\R$-vector space of harmonic polynomials is defined as \[\mathcal{H}_G := \left( \R[X_1,\ldots,X_n]^G\right)^\perp,\] with respect to the scalar product on $\R[X_1,\ldots,X_n]$ given by $$ \abb{\langle \cdot , \cdot \rangle}{\R[X_1,\ldots,X_n] \times \R[X_1,\ldots,X_n]}{\R[X_1,\ldots,X_n]}{(f,g)}{ \ev_{(0,\ldots,0)}\left( f(\partial) g(X_1,\ldots,X_n) \right).}$$
\end{definition}
Now the following statement shows that $G$-harmonic polynomials have some remarkable similarities with the covariant algebra as presented in Theorem \ref{thm:covariant algebra}. We refer the reader also to \cite{bergeron2009algebraic} for details on the following Theorem.
\begin{theorem}
Let $G$ be a real reflection group and $\Delta:=\prod L_i$, be the product of the linear polynomials defining the reflection hyperplanes. 
\begin{enumerate}
\item The vector space of $G$-harmonic polynomials $\mathcal{H}_G$ is generated by all partial derivatives of $\Delta$, i.e., $\mathcal{H}_G = \langle \frac{\partial^\alpha}{\partial x^\alpha}\Delta : \alpha \in \N_0^n \rangle_\R$. 
\item Furthermore, $\mathcal{H}_G$ is as $G$-representation isomorphic to the regular representation of $G$ and $\R[X_1,\ldots,X_n] = \R[\pi_1,\ldots,\pi_n]\otimes_\R \mathcal{H}_G.$ 
\end{enumerate}
\end{theorem}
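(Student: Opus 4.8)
The plan is to organize everything around the apolar pairing $\langle f,g\rangle=\ev_{(0,\ldots,0)}\big(f(\partial)g\big)$ and the covariant algebra $\R[X_1,\ldots,X_n]_G=\R[X_1,\ldots,X_n]/(\pi_1,\ldots,\pi_n)$, whose module structure is recorded in Theorem~\ref{thm:covariant algebra}. First I would isolate the two properties of the pairing that do all the work: it is graded and positive definite on each homogeneous component $\R[X_1,\ldots,X_n]_d$ (explicitly $\langle\sum_\alpha c_\alpha X^\alpha,\sum_\alpha c_\alpha X^\alpha\rangle=\sum_\alpha\alpha!\,c_\alpha^2$), so that $(U^\perp)^\perp=U$ and $\R[X_1,\ldots,X_n]_d=U_d\oplus(U_d)^\perp$ for every graded subspace $U$; and it satisfies the adjunction $\langle fg,h\rangle=\langle g,f(\partial)h\rangle$. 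I would then record that $\mathcal{H}_G=\{f:\pi_i(\partial)f=0,\ i=1,\ldots,n\}$ is exactly the orthogonal complement of the ideal $(\pi_1,\ldots,\pi_n)$ generated by the positive-degree invariants (immediate from the adjunction and non-degeneracy). As this ideal is $G$-stable and the pairing is $G$-invariant ($G$ acting orthogonally), $\mathcal{H}_G$ is a graded $G$-stable complement of $(\pi_1,\ldots,\pi_n)$ in $\R[X_1,\ldots,X_n]$, hence $\mathcal{H}_G\cong\R[X_1,\ldots,X_n]_G$ as graded $G$-modules. Together with Theorem~\ref{thm:covariant algebra} this yields $\mathcal{H}_G\cong\R[G]$; and since $\R[X_1,\ldots,X_n]$ is free over $\R[\pi_1,\ldots,\pi_n]$ (Theorem~\ref{thm:STC}) and $\mathcal{H}_G$ maps isomorphically onto the covariant algebra, the multiplication map $\R[\pi_1,\ldots,\pi_n]\otimes_\R\mathcal{H}_G\to\R[X_1,\ldots,X_n]$ is an isomorphism (graded Nakayama). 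In particular $\dim_\R\mathcal{H}_G=|G|$. So part~2 is essentially a repackaging of material already established.

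The content is in part~1. Write $\epsilon:=\det\circ\rho\colon G\to\{\pm1\}$ and call $f$ \emph{anti-invariant} if $f^g=\epsilon(g)f$ for all $g\in G$. The first ingredient is the classical divisibility lemma: every anti-invariant is divisible by $\Delta=\prod_iL_i$. Indeed, if $s$ is the reflection in the hyperplane $\{L_i=0\}$, an anti-invariant $f$ satisfies $f=-f$ on that hyperplane, so $L_i\mid f$; since the $L_i$ are pairwise non-associate, $\Delta\mid f$. As $\deg\Delta=N$, the number of reflecting hyperplanes, $\Delta$ is (up to scalar) the anti-invariant of least degree. Next, $\Delta\in\mathcal{H}_G$: for each basic invariant $\pi_i$ the map $f\mapsto\pi_i(\partial)f$ is $G$-equivariant, because $\pi_i$ is $G$-invariant, hence sends anti-invariants to anti-invariants; but $\pi_i(\partial)\Delta$ has degree $N-\deg\pi_i<N$, so by the lemma it vanishes. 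Finally, $\mathcal{H}_G$ is stable under every $\partial/\partial X_j$ (these commute with the $\pi_i(\partial)$), so $\mathcal{D}:=\langle\partial^\alpha\Delta:\alpha\in\N_0^n\rangle_\R\subseteq\mathcal{H}_G$; the task is the reverse inclusion.

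To prove $\mathcal{D}=\mathcal{H}_G$ I would pass to orthogonal complements. By the adjunction, $f\perp\mathcal{D}$ if and only if $f(\partial)\Delta=0$, so $\mathcal{D}^\perp=\operatorname{Ann}(\Delta):=\{f:f(\partial)\Delta=0\}$, which is an ideal containing $(\pi_1,\ldots,\pi_n)$; hence $\mathcal{D}=\operatorname{Ann}(\Delta)^\perp$, and everything comes down to the claim $\operatorname{Ann}(\Delta)=(\pi_1,\ldots,\pi_n)$. I expect this to be the main obstacle. The argument I have in mind uses that $A:=\R[X_1,\ldots,X_n]_G$ is a graded Artinian complete intersection (the $\pi_i$ form a regular sequence, by the freeness in Theorem~\ref{thm:STC}), hence Gorenstein, with one-dimensional top-degree component $A_N$, $N=\deg\Delta$, spanned by the image $\bar\Delta$ of $\Delta$; the latter is nonzero, since $\Delta\in(\pi_1,\ldots,\pi_n)$ would, after projecting onto the $\epsilon$-isotypic component and applying the divisibility lemma, force a relation $\sum_i\pi_ir_i=1$, impossible because the $\pi_i$ have positive degree. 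Now let $h$ be homogeneous with $h\notin(\pi_1,\ldots,\pi_n)$. Gorenstein (Poincaré) duality in $A$ provides a homogeneous $k$ with $\overline{hk}=c\bar\Delta$ for some $c\neq0$, i.e. $hk-c\Delta\in(\pi_1,\ldots,\pi_n)$. Since $\langle q_i\pi_i,\Delta\rangle=\langle q_i,\pi_i(\partial)\Delta\rangle=0$, the pairing annihilates $(\pi_1,\ldots,\pi_n)$ against $\Delta$, so by the adjunction
\[
\langle k,\ h(\partial)\Delta\rangle=\langle hk,\Delta\rangle=c\,\langle\Delta,\Delta\rangle\neq0,
\]
whence $h(\partial)\Delta\neq0$, i.e. $h\notin\operatorname{Ann}(\Delta)$. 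Thus $\operatorname{Ann}(\Delta)\subseteq(\pi_1,\ldots,\pi_n)$, so $\mathcal{D}=\operatorname{Ann}(\Delta)^\perp\supseteq(\pi_1,\ldots,\pi_n)^\perp=\mathcal{H}_G$, and together with the inclusion from part~1 we get $\mathcal{D}=\mathcal{H}_G$.

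The difficulty is concentrated in the identity $\operatorname{Ann}(\Delta)=(\pi_1,\ldots,\pi_n)$ — that is, in recognizing $\Delta$ as a Macaulay inverse-system generator of the ideal of positive-degree invariants — and the only genuine external input there is the Gorenstein property of the covariant algebra (equivalently, that $\pi_1,\ldots,\pi_n$ is a regular sequence, which is exactly Theorem~\ref{thm:STC}). An alternative would be a direct count showing $\dim_\R\mathcal{D}\geq|G|$, but the duality argument above is cleaner; everything else is the divisibility lemma and bookkeeping with the apolar pairing.
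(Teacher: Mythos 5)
The paper itself offers no proof of this theorem (it is quoted with a pointer to \cite{bergeron2009algebraic}, and the Jacobian fact to \cite{steinberg1960invariants}), so there is nothing internal to compare against; your plan is the standard Macaulay-inverse-system/Gorenstein-duality route, and most of it is sound. In particular: reading $\mathcal{H}_G$ as the orthogonal complement of the ideal $(\pi_1,\ldots,\pi_n)$ rather than literally of $\R[X]^G$ is the right move (with the paper's literal definition the statement would be false -- e.g.\ for $G=\{\pm 1\}$ on $\R$ that complement is the infinite-dimensional span of the odd monomials); the adjunction and graded positive-definiteness, $\mathcal{D}^\perp=\operatorname{Ann}(\Delta)$, the divisibility lemma, $\Delta\in\mathcal{H}_G$, $\bar\Delta\neq 0$ in the covariant algebra, and the derivation of part~2 from Theorems~\ref{thm:STC} and~\ref{thm:covariant algebra} are all correct.

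The gap is in the duality step. From ``$A=\R[X_1,\ldots,X_n]_G$ is an Artinian graded complete intersection, hence Gorenstein'' you get a one-dimensional socle sitting in the \emph{top} degree $s$ of $A$, and Poincar\'e duality pairs $A_d$ with $A_{s-d}$; but nothing you have established forces $s=N=\deg\Delta$. You only prove $\bar\Delta\neq 0$, i.e.\ $s\geq N$; if $s>N$ were possible, then for $h\notin(\pi_1,\ldots,\pi_n)$ duality would only produce $k$ with $\bar h\bar k$ spanning $A_s$, not $hk\equiv c\Delta \bmod (\pi_1,\ldots,\pi_n)$, and the computation $\langle k,h(\partial)\Delta\rangle=c\langle\Delta,\Delta\rangle$ collapses. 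So the assertion ``one-dimensional top-degree component $A_N$ spanned by $\bar\Delta$'' is exactly the missing input: equivalently $A_d=0$ for $d>N$, equivalently $\sum_i(\deg\pi_i-1)=N$ (sum of the exponents equals the number of reflections), equivalently $X_j\Delta\in(\pi_1,\ldots,\pi_n)$ for all $j$ -- and the last does not follow from your divisibility lemma, since $X_j\Delta$ is not anti-invariant. This is a genuine extra classical fact, not part of the Gorenstein package: you can fill it by combining the Hilbert series $\prod_i(1+t+\cdots+t^{\deg\pi_i-1})$ of $A$ (which freeness does give you) with Steinberg's identity $\Delta=c\cdot\mathrm{Jac}\,\Psi$ quoted in the paper right after the theorem (its degree is $\sum_i(\deg\pi_i-1)$), or by a Molien-series computation. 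Once $s=N$ is in hand, the rest of your argument closes as written.
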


Let $G$ be a finite reflection group and $\psi_1,\ldots,\psi_n$ be generators of the invariant ring.  Consider the map $$\abb{\Psi}{\R^n}{\R^n}{X_1,\ldots,X_n}{(\psi_1(X_1,\ldots,X_n),\ldots,\psi_n(X_1,\ldots,X_n)).}$$
Then, thanks to a statement of Steinberg in \cite{steinberg1960invariants}, we have $$\Delta = c\cdot \emph{Jac } \Psi,$$ where $c \in \R \setminus \{0\}$ and $\emph{Jac } \Psi$ denotes the Jacobian matrix of $\Psi$. The choice of fundamental invariants $\psi_1,\ldots,\psi_n$ does not matter.

\begin{example}{Jacobian of $\S_n$}
For $\S_n$ the symmetric group acting on $\R^n$ via coordinate permutation and \[\psi_i := p_i = \sum_{j=1}^n X_j^i\] the power sums, we obtain $\Delta = \prod_{i < j}(x_i-x_j)$ equals (up to a scalar) the determinant of the Vandermonde matrix. The form defined above $\Delta$ which is the product over all reflections $(i,j)$ of $\S_n$ equals therefore up to a scalar the Jacobian of $\Psi$. 
\end{example}

Building on the notions of invariant theory outlined here we now can outline how to use the observation that the polynomial ring is a module over the invariant ring in the context of sums of squares formulations.

\subsection{Invariant theory and sums of squares}\label{ssec:InvSOS}

In Section \ref{seq:rep} where we focused on  representation theory, we already saw how to apply Schur's Lemma to obtain simplifications for sums of squares computations. 
We shall explain how to combine the techniques developed with Schur's Lemma with results from invariant theory in the situation of symmetric sums of squares. Note that an invariant polynomial which can be expressed as a sum of squares in the ring $\R[X_1,\ldots,X_n]$ will not necessarily have a sum of squares decomposition in invariant polynomials, i.e., 
$$\R[X_1,\ldots,X_n]^G\bigcap\sum \R[X_1,\ldots,X_n]^2\neq \sum(\R[X_1,\ldots,X_n]^G)^2,$$ as can be seen in the following example.
\begin{example}{A symmetric sum of squares is not a sum of symmetric squares}
Consider the symmetric sums of squares polynomial $f=X_1^2+X_2^2+\ldots+X_n^2$.  Up to scalar multiplication there is only one symmetric polynomial of degree 1, namely $e_1:=X_1+X_2+...+X_n$.
Clearly, if $n>1$ we have $\alpha\cdot e_1^2\neq f$ for every $\alpha\in\R$.
\end{example}
The above example suggests that it is at first sight not clear which algebraic property in the ring $\R[X]^G$ certifies that a given invariant polynomial is a sum of squares of elements in $\R[X]$. However,  we have seen that $\R[X]$ is a finitely generated module over the invariant ring. This observation allows for a refined version of Theorem \ref{THM Decomp}.  Following the presentation in \cite{debus2020reflection}, we focus here mainly on finite reflection groups, since, as we have seen, their representation theory is particularly nice. However,  these results can be generalized to all finite groups using the Hironaka decomposition with appropriate adaptations. We begin with the following observation which makes use of the decomposition
\[\R[X_1,\ldots,X_n]\simeq \R[X_1,\ldots,X_n]^G \otimes \R[X_1,\ldots,X_n]_G\] provided in Theorem \ref{thm:covariant algebra}. Since the covariant algebra is isomorphic to the regular representation of $G$, we can pick a basis 
$S:=\{s_1,\ldots,s_{|G|}\}$. Relatively to this basis  we now construct  a matrix polynomial 
with entries 
$$H^S_{u,v}:=\mathcal{R}_G(s_u\cdot s_v),\, \text{ where }1\leq u,v\leq |G|.$$
Since now every entry $\mathcal{R}_G(s_u\cdot s_v)$ is by construction $G$-invariant we can express the entries in  terms of the $\pi_1,\ldots,\pi_n$, i.e., we obtain a matrix polynomial in new variables $z_1,\ldots,z_n$
\[H^S(z_1,\ldots,z_n)\in\R[z_1,\ldots,z_n]^{|G| \times |G|}.\]
Given any matrix-polynomial $L(z_1,\ldots,z_n)\in\R[z_1,\ldots,z_n]^{n\times m}$,  
we can construct a square matrix polynomial  $M=L^TL$. We say that a $n\times n$ matrix polynomial $M$ is a \emph{sum of squares matrix polynomial} if it can be obtained in this way. With this notion at hand one  can  deduce from the decomposition in Theorem \ref{thm:covariant algebra} the following  algebraic certificate  for an invariant polynomial to be a sum of squares.
\begin{proposition}
Let $G$ be a finite reflection group and let $f$ be an invariant polynomial.  Consider the polynomial  $g\in\R[z_1,\ldots,z_n]$ with $g(\pi_1,\ldots,\pi_n)=f$. Then $f$ is a sum of squares if and only if $g$ admits a representation of the form
$$g= \Tr (M\cdot H^S),$$
where $M$ is a sum of squares matrix polynomial.
\end{proposition}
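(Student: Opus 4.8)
The plan is to read everything through the two structural facts supplied by the Shephard--Todd--Chevalley theorem (Theorem~\ref{thm:STC}) and Theorem~\ref{thm:covariant algebra}: the generators $\pi_1,\ldots,\pi_n$ are algebraically independent, so the substitution map $\R[z_1,\ldots,z_n]\to\R[X_1,\ldots,X_n]^G$, $z_i\mapsto\pi_i$, is an isomorphism (and in particular $g$ is the unique preimage of $f$); and the fixed lifts $s_1,\ldots,s_{|G|}$ of a basis of the covariant algebra form a \emph{free} basis of $\R[X_1,\ldots,X_n]$ as a module over $\R[X_1,\ldots,X_n]^G$. Write $\pi:=(\pi_1,\ldots,\pi_n)$ and $s:=(s_1,\ldots,s_{|G|})^T$. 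The one computation driving the whole argument is that for any symmetric matrix polynomial $N\in\R[z_1,\ldots,z_n]^{|G|\times|G|}$, using the $\R[X]^G$-linearity of $\mathcal{R}_G$ and the defining relation $H^S_{u,v}(\pi)=\mathcal{R}_G(s_us_v)$,
\[
\mathcal{R}_G\bigl(s^TN(\pi)\,s\bigr)=\sum_{u,v}N_{u,v}(\pi)\,\mathcal{R}_G(s_us_v)=\bigl(\Tr(NH^S)\bigr)(\pi).
\]

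For the ``only if'' direction I would begin from an arbitrary decomposition $f=\sum_t p_t^2$, expand each factor in the free module basis as $p_t=\sum_u q_{t,u}(\pi)\,s_u$ with $q_{t,u}\in\R[z_1,\ldots,z_n]$, and set $N:=\sum_t q_tq_t^T$, where $q_t=(q_{t,1},\ldots,q_{t,|G|})^T$. Then $N$ is a sum of squares matrix polynomial, since $N=Q^TQ$ for the matrix $Q$ whose rows are the $q_t^T$, and a direct expansion gives $\sum_t p_t^2=s^TN(\pi)\,s$ as an identity in $\R[X]$. Applying $\mathcal{R}_G$ to both sides, using that the left side is invariant and the displayed identity on the right, yields $f=\bigl(\Tr(NH^S)\bigr)(\pi)$; since also $f=g(\pi)$ and the $\pi_i$ are algebraically independent, this forces $g=\Tr(NH^S)$ in $\R[z_1,\ldots,z_n]$.

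For the converse I would run the same computation in reverse. Given $g=\Tr(MH^S)$ with $M=L^TL$ for some $L\in\R[z_1,\ldots,z_n]^{k\times|G|}$, substituting $z=\pi$ gives $f=g(\pi)=\mathcal{R}_G\bigl(s^TM(\pi)\,s\bigr)$. Setting $h_j:=\sum_u L_{j,u}(\pi)\,s_u\in\R[X]$, one has $s^TM(\pi)\,s=(L(\pi)s)^T(L(\pi)s)=\sum_{j=1}^k h_j^2$, and hence $f=\mathcal{R}_G\bigl(\sum_j h_j^2\bigr)=\frac{1}{|G|}\sum_{\sigma\in G}\sum_j (h_j^\sigma)^2$, which displays $f$ as a sum of squares in $\R[X]$.

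I expect nearly all of this to be routine bookkeeping with the free module structure and the elementary properties of $\mathcal{R}_G$; the single place that genuinely uses the geometry of reflection groups is the claim that the $s_u$ form a \emph{free} $\R[X]^G$-basis of $\R[X]$, not merely a basis of the covariant algebra. This is exactly what Theorem~\ref{thm:covariant algebra} packages (equivalently, the freeness clause of Theorem~\ref{thm:STC} together with a graded Nakayama argument). Once it is granted, both implications collapse to the single displayed identity, applied in the two directions.
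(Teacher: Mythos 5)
Your argument is correct and is exactly the deduction the paper intends: the paper gives no written proof, saying only that the proposition follows from the decomposition in Theorem~\ref{thm:covariant algebra}, and your proof fills this in the natural way — expand squares in the (lifted) basis $s_1,\ldots,s_{|G|}$ of $\R[X_1,\ldots,X_n]$ over $\R[X_1,\ldots,X_n]^G$, apply the Reynolds operator to get the identity $\mathcal{R}_G\bigl(s^TN(\pi)s\bigr)=\bigl(\Tr(NH^S)\bigr)(\pi)$, and use algebraic independence of the $\pi_i$ to pass between $f$ and $g$. Both directions check out, so this is essentially the same approach, just made explicit.
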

The above construction works for every basis of the covariant algebra. But since we know that the group representation  on the covariant algebra is the regular representation one can pick a basis that decomposes this covariant algebra into irreducible representations. We note this explicitly in the following Proposition.

\begin{proposition}\label{cor:deomposition of polynomial ring}
Let $G$ be a finite reflection group, and  let \[\R[X_1,\ldots,X_n]_G = \bigoplus_{i\in I} \eta_i \theta^{(i)}\] be the isotypic decomposition of the covariant algebra. Denote $\ell=|I|$. Then there are polynomials $s_{11},\ldots,s_{\ell\eta_\ell} \in \R[X_1,\ldots,X_n]_G$ such that any $f \in \R[X_1,\ldots,X_n]$ can be written as $$f = \sum_{i\in I} \sum_{j=1}^{\eta_i} \sum_{\sigma \in G} g_{ij,\sigma} \sigma s_{ij}, $$ where $g_{ij,\sigma} \in \R[X_1,\ldots,X_n]^G$.
\end{proposition}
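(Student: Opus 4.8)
The plan is to deduce the statement directly from the tensor decomposition of Theorem~\ref{thm:covariant algebra}, so that the only real work is a short module-theoretic observation about the covariant algebra. Recall that Theorem~\ref{thm:covariant algebra} provides an isomorphism of graded $\R$-algebras
\[\R[X_1,\ldots,X_n]\;\cong\;\R[X_1,\ldots,X_n]^G\otimes_\R\R[X_1,\ldots,X_n]_G,\]
which can be taken compatible with the $G$-actions (trivial on the first factor, the covariant $G$-module structure on the second). Viewing $\R[X_1,\ldots,X_n]$ this way, it is free as a module over $\R[X_1,\ldots,X_n]^G$ on any $\R$-basis of $\R[X_1,\ldots,X_n]_G$; more to the point, \emph{any} $\R$-spanning set $\{b_k\}$ of $\R[X_1,\ldots,X_n]_G$ yields an $\R[X_1,\ldots,X_n]^G$-spanning set $\{1\otimes b_k\}$ of $\R[X_1,\ldots,X_n]$. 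Hence it suffices to produce polynomials $s_{ij}\in\R[X_1,\ldots,X_n]_G$ such that the sets $\{\sigma s_{ij}:\sigma\in G\}$, as $i$ ranges over $I$ and $1\le j\le\eta_i$, together span the covariant algebra over $\R$.

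To build the $s_{ij}$, I would use the given isotypic decomposition, refining it to a decomposition into irreducible $G$-submodules $\R[X_1,\ldots,X_n]_G=\bigoplus_{i\in I}\bigoplus_{j=1}^{\eta_i}W_{i,j}$ with each $W_{i,j}$ isomorphic to $\theta^{(i)}$. For each $i$, pick any nonzero $s_{i1}\in W_{i,1}$. Since $W_{i,1}$ is irreducible, the $G$-span of $s_{i1}$ is a nonzero submodule and therefore equals $W_{i,1}$; that is, $s_{i1}$ cyclically generates $W_{i,1}$. Fixing $G$-isomorphisms $\phi_{i,j}\colon W_{i,1}\to W_{i,j}$ and setting $s_{ij}:=\phi_{i,j}(s_{i1})$, the element $s_{ij}$ cyclically generates $W_{i,j}$, so $\{\sigma s_{ij}:\sigma\in G\}$ spans $W_{i,j}$.

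Putting the two steps together, the union over all $i\in I$ and $1\le j\le\eta_i$ of the orbits $\{\sigma s_{ij}:\sigma\in G\}$ spans $\bigoplus_{i,j}W_{i,j}=\R[X_1,\ldots,X_n]_G$ over $\R$; feeding this spanning set into the first reduction gives, for every $f\in\R[X_1,\ldots,X_n]$, an expression $f=\sum_{i\in I}\sum_{j=1}^{\eta_i}\sum_{\sigma\in G}g_{ij,\sigma}\,\sigma s_{ij}$ with $g_{ij,\sigma}\in\R[X_1,\ldots,X_n]^G$, which is exactly the claim. The one point requiring care — and the only place an obstacle could hide — is the bookkeeping between the quotient ring $\R[X_1,\ldots,X_n]_G$ and honest polynomials: one should fix once and for all the graded $G$-equivariant identification of Theorem~\ref{thm:covariant algebra} (equivalently, a homogeneous free $\R[X_1,\ldots,X_n]^G$-module basis of $\R[X_1,\ldots,X_n]$, which exists by the equivalence of (2) and (3) in Theorem~\ref{thm:STC}), and read $\sigma s_{ij}$ and the products $g_{ij,\sigma}\,\sigma s_{ij}$ through it. Once this is done the argument is, essentially, just a repackaging of Theorem~\ref{thm:covariant algebra}, so no serious difficulty remains.
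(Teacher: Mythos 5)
Your argument is correct and follows essentially the same route the paper intends: it combines the decomposition $\R[X_1,\ldots,X_n]\cong\R[X_1,\ldots,X_n]^G\otimes_\R\R[X_1,\ldots,X_n]_G$ of Theorem~\ref{thm:covariant algebra} with a choice of a symmetry-adapted set of cyclic generators $s_{ij}$ of the irreducible constituents of the covariant algebra, whose $G$-orbits span it over $\R$. Your closing remark about fixing a graded $G$-equivariant lift of the covariant algebra to honest polynomials is exactly the right bookkeeping point, so nothing is missing.
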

Note that the summation index $\eta_i$ equals the multiplicity of the corresponding   irreducible representation in the isotypic decomposition, which in turn equals the dimension of the irreducible representation. 
\begin{definition}
With the notation used above, we can construct 
a matrix polynomial $H^{\theta_i}\in\R[z_1,\ldots,z_n]^{\eta_i\times\eta_i}$ for  every irreducible representation $\theta^{(i)}$ of $G$ via 
$$H^{\theta_i}_{u,v}=\mathcal{R}_G(s_{i,u}\cdot s_{i,v}), \text{ where } 1\leq u,v\leq \eta_i.$$
\end{definition}
Combining the above definition with the results from Schur's lemma we immediately get the following.
\begin{theorem}\label{thm:sums of squares description}
Let $G$ be a finite reflection group with $\R[X_1,\ldots,X_n]^G \simeq\R[\pi_1,\ldots,\pi_n]$, then we have
$$\Sigma \R[X_1,\ldots,X_n]^2 \cap \R[X_1,\ldots,X_n]^G= \left\{ g\in \R[\pi_1,\ldots,\pi_n]\,:\, g=\sum_{j=1}^l \Tr(H^{\vartheta_j}\cdot A_j)\right\},$$
where $A_j\in{ \R[\pi_1,\ldots,\pi_n]}^{\eta_j\times\eta_j}$ is a sum of squares matrix polynomial.
\end{theorem}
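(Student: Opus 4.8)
The plan is to prove the stated equality by a double inclusion: ``$\supseteq$'' is a direct expansion of the trace, while ``$\subseteq$'' is where the freeness of the polynomial ring over the invariant ring (Theorem~\ref{thm:covariant algebra}) combines with Schur's Lemma.

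For ``$\supseteq$'', I would start from $g=\sum_{i}\Tr(H^{\vartheta_i}\cdot A_i)$ with each $A_i=L_i^{T}L_i$ a sum of squares matrix polynomial, substitute $z_k\mapsto\pi_k$, and use that the entries of $H^{\vartheta_i}$ are by construction the invariant polynomials $\mathcal{R}_G(s_{i,u}\cdot s_{i,v})$. Writing $\Tr(H^{\vartheta_i}(\pi)\cdot A_i(\pi))=\sum_{u,v,t}\mathcal{R}_G(s_{i,u}\cdot s_{i,v})(L_i(\pi))_{t,u}(L_i(\pi))_{t,v}$ and pulling the invariant entries of $A_i(\pi)=L_i(\pi)^{T}L_i(\pi)$ inside $\mathcal{R}_G$ (which is $\R[X]^G$-linear), each summand collapses to $\sum_{t}\mathcal{R}_G(q_{i,t}^{2})$ with $q_{i,t}:=\sum_{u}(L_i(\pi))_{t,u}\, s_{i,u}\in\R[X_1,\ldots,X_n]$. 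Since $\mathcal{R}_G(q^{2})=\frac{1}{|G|}\sum_{\sigma\in G}(q^{\sigma})^{2}$ is visibly a sum of squares, $g(\pi_1,\ldots,\pi_n)$ is a sum of squares that is invariant, hence lies in the left-hand side.

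For ``$\subseteq$'', let $f\in\R[X_1,\ldots,X_n]^G$ be a sum of squares, $f=\sum_{t}p_t^{2}$. By Theorem~\ref{thm:covariant algebra} the ring $\R[X_1,\ldots,X_n]$ is free over $\R[X_1,\ldots,X_n]^G$, and since the covariant algebra realizes the regular representation I can choose a symmetry-adapted $\R$-basis $\{s^{(i)}_{u,a}\}$ of $\R[X_1,\ldots,X_n]_G$ in which $i$ labels the irreducible type, $u=1,\ldots,\eta_i$ labels the copy of $\vartheta_i$, $a=1,\ldots,\eta_i$ labels a basis vector inside that copy, and $s^{(i)}_{u,1}=s_{i,u}$ are the generators from Proposition~\ref{cor:deomposition of polynomial ring}. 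Then every $p_t$ has a unique expansion $p_t=\sum_{i,u,a}c^t_{i,u,a}\, s^{(i)}_{u,a}$ with $c^t_{i,u,a}\in\R[X_1,\ldots,X_n]^G$, and applying $\mathcal{R}_G$ to $f=\mathcal{R}_G(f)$ while pulling the invariant coefficients out yields
\[
f=\sum_{t}\ \sum_{i,u,a}\ \sum_{i',u',a'}\ c^t_{i,u,a}\,c^t_{i',u',a'}\ \mathcal{R}_G\!\left(s^{(i)}_{u,a}\cdot s^{(i')}_{u',a'}\right).
\]
The heart of the argument is then the identity
\[
\mathcal{R}_G\!\left(s^{(i)}_{u,a}\cdot s^{(i')}_{u',a'}\right)=\delta_{ii'}\,\delta_{aa'}\,H^{\vartheta_i}_{u,u'}(\pi_1,\ldots,\pi_n),
\]
which collapses the quadruple sum to $f=\sum_{i}\sum_{u,u'}H^{\vartheta_i}_{u,u'}(\pi)\,(A_i)_{u,u'}$ with $(A_i)_{u,u'}:=\sum_{t,a}c^t_{i,u,a}\,c^t_{i,u',a}$. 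As $A_i=\sum_{t,a}\mathbf c^{t,a}_i(\mathbf c^{t,a}_i)^{T}$ with $\mathbf c^{t,a}_i=(c^t_{i,1,a},\ldots,c^t_{i,\eta_i,a})^{T}$, the matrix $A_i$ is a sum of rank-one positive semidefinite matrices with entries in $\R[X_1,\ldots,X_n]^G$, hence a sum of squares matrix polynomial; reading $A_i$ and $H^{\vartheta_i}$ as matrices over $\R[z_1,\ldots,z_n]$ via $\pi_k\leftrightarrow z_k$ gives $g=\sum_i\Tr(H^{\vartheta_i}\cdot A_i)$, as required.

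The identity above is where I expect the main obstacle to lie, and it is precisely where Schur's Lemma is used. The bilinear pairing $(x,y)\mapsto\mathcal{R}_G(xy)$ on the two irreducible copies carrying $s^{(i)}_{u,\bullet}$ and $s^{(i')}_{u',\bullet}$ is $G$-invariant as a map into the trivial module $\R[X_1,\ldots,X_n]^G$; by irreducibility (Theorem~\ref{thm:schur}) it vanishes unless those copies are mutually dual, which for a finite real reflection group --- all of whose irreducibles are self-dual --- forces $i=i'$. For $i=i'$, Corollary~\ref{cor:Schur} shows the value factors as a fixed invariant polynomial $h^{(i)}_{u,u'}\in\R[X_1,\ldots,X_n]^G$ times the value $B_0(a,a')$ of the essentially unique $G$-invariant $\R$-bilinear form $B_0$ on $\vartheta_i$; normalizing the inner basis of every copy to be $B_0$-orthonormal produces the factor $\delta_{aa'}$, and specializing to $a=a'=1$ identifies $h^{(i)}_{u,u'}=\mathcal{R}_G(s_{i,u}\cdot s_{i,u'})=H^{\vartheta_i}_{u,u'}$. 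The delicate part of the write-up will be carrying out the choice of the isomorphisms between the $\eta_i$ isotypic copies and the $B_0$-orthonormalization \emph{coherently}, so that the single scalar Schur's Lemma produces is literally the $(u,u')$ entry of $H^{\vartheta_i}$ rather than a copy-dependent rescaling of it. One could instead deduce ``$\subseteq$'' from Corollary~\ref{cor:sos} by re-expanding the polynomials $f_{j,u}\in\R[X]_{\leq d}$ appearing there in the free-module basis of Theorem~\ref{thm:covariant algebra}; the direct route is preferable since it produces genuine sum of squares matrix polynomials $A_i$ rather than merely positive semidefinite ones.
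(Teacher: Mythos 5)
Your argument is correct and is essentially the paper's own route: the paper obtains the theorem by combining the free-module/covariant-algebra decomposition (Theorem~\ref{thm:covariant algebra}, Proposition~\ref{cor:deomposition of polynomial ring}) with the Schur-lemma collapse carried out in the proof of Theorem~\ref{THM Decomp}, which is exactly what your two inclusions and the key identity $\mathcal{R}_G\bigl(s^{(i)}_{u,a}\,s^{(i')}_{u',a'}\bigr)=\delta_{ii'}\,\delta_{aa'}\,H^{\vartheta_i}_{u,u'}$ spell out in the module setting. The only adjustment needed is at the $i=i'$ step: what you must invoke is that the space of $G$-invariant real bilinear forms on $\vartheta_i$ is one-dimensional, which does not follow from Corollary~\ref{cor:Schur} (that statement concerns Hermitian forms on complex irreducibles) but from the classical fact that every real irreducible representation of a finite reflection group is absolutely irreducible (type I) --- the same standing assumption under which the paper's own presentation operates.
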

Although the above Theorem \ref{thm:sums of squares description} is not too different in spirit from Theorem \ref{THM Decomp},  it allows to transfer the decision if an invariant polynomial is a sum of squares into the invariant ring. Furthermore, this in turn allows for a more global approach. Indeed, Theorem \ref{THM Decomp} was stated for polynomials of a given degree $2d$ and relied in the decomposition of the space $\R[X_1,\ldots,X_n]_{\leq d}$. The use of the covariant algebra implicitly also directly yields a decomposition of the space $\R[X_1,\ldots,X_n]_{\leq d}$. Therefore, understanding the decomposition of the covariant algebra also provides important quantitative information. More precisely:
we denote  the dimension of the $\R$-vector space of $G$-invariant forms of degree $d$ by  $N_G(d)$, and write $h_k^{\vartheta}$ for the multiplicity of an irreducible representation $\theta$ in $(\R[X_1,\ldots,X_n]_G^{\theta})_{\leq k}$, i.e., for  the multiplicity of $\theta$ in the isotypic decomposition of the subspace of polynomials  of degree at most $k$ in the covariant algebra.
Then we get the following quantitative information necessary for applying Theorem \ref{THM Decomp}  directly. 

\begin{proposition}\label{cor:bound}
Let $G$ be a finite reflection group and $\theta$ be an irreducible representation. Then the multiplicity of the corresponding irreducible representation in the $G$-module $(\R[X_1,\ldots,X_n]_G^{\theta})_{\leq k}$ equals  
$$\sum_{k=0}^dN_G(d-k)\cdot h^{\vartheta}_k.$$
\end{proposition}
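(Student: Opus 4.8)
The plan is to reduce the count to the graded $G$-module structure of the polynomial ring furnished by Theorem~\ref{thm:covariant algebra}. For a finite reflection group $G$ the basic invariants $\pi_1,\ldots,\pi_n$ may be taken homogeneous, so the covariant algebra $\R[X_1,\ldots,X_n]_G = \R[X_1,\ldots,X_n]/(\pi_1,\ldots,\pi_n)$ carries the induced grading, and Theorem~\ref{thm:covariant algebra} provides an isomorphism
\[
\R[X_1,\ldots,X_n] \;\cong\; \R[X_1,\ldots,X_n]^G \otimes_\R \R[X_1,\ldots,X_n]_G
\]
which is simultaneously an isomorphism of graded $\R$-algebras for the total degree on the right and of $G$-modules, where $G$ acts trivially on the invariant ring $\R[X_1,\ldots,X_n]^G$ and on the covariant algebra as the regular representation.

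First I would restrict this isomorphism to the finite-dimensional space $\R[X_1,\ldots,X_n]_{\leq d}$. Since the grading on the tensor product is the total degree and the ideal $(\pi_1,\ldots,\pi_n)$ is homogeneous, this space corresponds to
\[
\bigoplus_{l=0}^{d} \left(\R[X_1,\ldots,X_n]^G\right)_{l} \otimes_\R \left(\R[X_1,\ldots,X_n]_G\right)_{\leq d-l},
\]
where $(\,\cdot\,)_l$ is the homogeneous degree-$l$ component and $(\,\cdot\,)_{\leq m}$ the sum of all homogeneous components of degree at most $m$; the latter is a $G$-submodule of the covariant algebra because $G$ acts degree-preservingly. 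As $G$ acts trivially on $\R[X_1,\ldots,X_n]^G$, the $l$-th summand is, as a $G$-module, a direct sum of $\dim_\R\left(\R[X_1,\ldots,X_n]^G\right)_l = N_G(l)$ copies of $\left(\R[X_1,\ldots,X_n]_G\right)_{\leq d-l}$.

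Finally I would take the multiplicity of the fixed irreducible $\theta$ on both sides; since multiplicities add over direct sums, the multiplicity of $\theta$ in $\R[X_1,\ldots,X_n]_{\leq d}$ (equivalently, in the isotypic component $\left(\R[X_1,\ldots,X_n]^\theta\right)_{\leq d}$) equals $\sum_{l=0}^{d} N_G(l)\cdot h^{\theta}_{d-l}$, which is the asserted value after reindexing $k = d-l$. I do not expect a genuine obstacle beyond Theorem~\ref{thm:covariant algebra}, which we are free to invoke; the only point demanding care is the bookkeeping: one must keep the truncation of the invariant factor \emph{homogeneous} (so it is counted by $N_G$) while letting the remaining degree budget flow entirely into the covariant factor (counted by the cumulative multiplicity $h^\theta_k$), so that the two truncations combine into the single convolution $\sum_k N_G(d-k)\,h^\theta_k$ rather than a double cumulative sum. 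Running the identical argument with $\left(\R[X_1,\ldots,X_n]_G\right)_{d-l}$ in place of $\left(\R[X_1,\ldots,X_n]_G\right)_{\leq d-l}$ would instead give the corresponding identity for forms of exact degree $d$.
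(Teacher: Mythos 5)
Your argument is correct and is exactly the route the paper intends: the paper states this proposition without an explicit proof, appealing only to the covariant-algebra decomposition of Theorem~\ref{thm:covariant algebra}, and your graded bookkeeping (truncating the tensor factorization to total degree at most $d$, using triviality of the $G$-action on the invariant factor, and reindexing $k=d-l$) fills in precisely the missing convolution computation. You also correctly read the left-hand side of the (somewhat garbled) statement as the multiplicity of $\theta$ in $\R[X_1,\ldots,X_n]_{\leq d}$, i.e.\ in its isotypic component $(\R[X_1,\ldots,X_n]^{\theta})_{\leq d}$, which is the only interpretation consistent with the definitions of $N_G$ and $h_k^{\theta}$.
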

We are going to explore this theorem and its consequences a bit more in detail in the case of symmetric polynomials in the next Subsection and in the situation of a regular triangle in the next example,  which is made more general in  \cite{debus2020reflection}.
\begin{example}{Sums of squares invariant by the dihedral group $D_3$}
Consider the dihedral group $D_{3}$ of order $6$ acting on the plane as a reflection group. The corresponding invariant ring is generated by the two  polynomials  $\pi_1=X_1^2+X_2^2$ and $\pi_2= X_2^3 - 3X_1X_2^2$. The group  has three irreducible representations, two of which are one dimensional and one of which is two dimensional.  We find that the corresponding covariant algebra $\R[x,y]_{D_{3}}$  decomposes into $$\theta^{(1)} = \langle 1 \rangle, \quad \theta^{(2)} = \langle -X_1^3+3X_1X_2^2 \rangle, \quad \theta^{(3)}_1 = \langle X_1,X_2\rangle, \quad \theta^{(3)}_2 = \langle X_1X_2,X_2^2 \rangle.$$ Then $$H^{\theta^{(1)}} = (1),\; H^{\theta^{(2)}} = \left( \mathcal{R}_{D_3} (3X_1X_2^2-X_1^3)^2\right), H^{\theta^{(3)}} = \left(\begin{array}{cc}
 \mathcal{R}_{D_3} (X_1^2)    & \mathcal{R}_{D_3}(X_1^2X_2) \\
  \mathcal{R}_{D_3} (X_1^2X_2)   & \mathcal{R}_{D_3} (X_1^2X_2^2) 
\end{array}\right), $$
and we obtain
$$H^{\theta^{(1)}} = (1),\; H^{\theta^{(2)}} = \left( \pi_1^3-\pi_2^2\right),\; H^{\theta^{(3)}} = \left(\begin{array}{cc}
 \phantom{-}\frac{\pi_1}{2}    & -\frac{\pi_2^2}{2} \\
  -\frac{\pi_2^2}{2}   & \phantom{-}\frac{\pi_1^2}{8}  
\end{array}\right), $$
\end{example}

\subsection{Symmetric sums of squares}\label{ssec:Symsos}
To conclude our discussion on the representations of invariant sums of squares, we focus on the case of symmetric polynomials. This case has been studied by various authors (for example \cite{kurpisz2020sum,riener2011symmetries,raymond2018symmetric,heaton2021sos}), and our presentation follows \cite{riener2}. 

In Definition \ref{def:Specht} we have seen how to combinatorially describe the irreducible representations of $\S_n$ with the help of Young tabloids. A classical construction of Specht realizes these irreducible representations  as submodules of the polynomial ring (see \cite{specht1937darstellungstheorieSn}): 
\begin{definition}
For $\lambda\vdash n$ let $T_{\lambda}$ be a  standard Young tableau of shape $\lambda$ and
$\mathcal{C}_1,\ldots,\mathcal{C}_{\nu}$ be the columns of
$T_\lambda$. To $T_\lambda$ we associate the monomial
\[X^{T_{\lambda}}:=\prod_{i=1}^{n}X_i^{m(i)-1},\] where $m(i)$ is the
index of the row of $T_{\lambda}$ containing $i$.
Note that for any $\lambda$-tabloid $\{T_{\lambda}\}$ the monomial
$X^{T_{\lambda}}$ is well defined,
and the mapping $\{T_{\lambda}\} \mapsto X^{T_{\lambda}}$ is a
$\mathfrak{S}_n$-isomorphism.
For any column $\mathcal{C}_i$ of $T_\lambda$ we denote by
$\mathcal{C}_i(j)$ the  element in the  $j$-th row and we associate to it a
Vandermonde determinant:
$$\Van_{\mathcal{C}_{i}} \ := \ \det
\left(
\begin{array}{ccc}
X_{ \mathcal{C}_i(1)}^0& \ldots  &X_{\mathcal{C}_i(k)}^0   \\
 \vdots&  \ddots &\vdots   \\
X_{ \mathcal{C}_i(1)}^{k-1}& \ldots  &X_{\mathcal{C}_i(k)}^{k-1}
\end{array}
\right) \ = \ \prod_{j<l}(X_{\mathcal{C}_i(j)}-X_{\mathcal{C}_i(l)}).$$
\noindent The \emph{Specht polynomial} $sp_{T_{\lambda}}$ associated
to $T_\lambda$ is defined as
\[
  sp_{T_{{\lambda}}} \ := \ \prod_{i=1}^{\nu} \Van_{\mathcal{C}_{i}}
  \ = \
 \sum_{\sigma\in \CStab_{T_{\lambda}}}\sgn(\sigma)\sigma(X^{T_{\lambda}}) \, ,
\]
where $\CStab_{T_{\lambda}}$ is the column stabilizer of $T_\lambda$.
\end{definition}
\noindent By the $\mathfrak{S}_n$-isomorphism $\{T_{\lambda}\} \mapsto
X^{T_{\lambda}}$,
$\mathfrak{S}_n$ acts on $sp_{T_{{\lambda}}}$ in the same way as on
the polytabloid $e_{T_{\lambda}}$.
If $T_{\lambda,1},\ldots,T_{\lambda,k}$ denote all standard Young
tableaux associated to $\lambda$, then the set of polynomials
$sp_{T_{\lambda,1}},\ldots,sp_{T_{\lambda,k}}$
are called the \emph{Specht polynomials} associated to $\lambda$. We then have the following result due to Specht \cite{specht1937darstellungstheorieSn}:
\begin{proposition}\label{pro:Specht}
The Specht polynomials $sp_{T_{\lambda,1}},\ldots,sp_{T_{\lambda,k}}$ span an
$\mathfrak{S}_n$-submodule of $\R[X]$ which is isomorphic to the Specht
module $S^{\lambda}$.
\end{proposition}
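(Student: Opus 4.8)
The plan is to establish the claimed $\mathfrak{S}_n$-isomorphism by exhibiting an explicit equivariant surjection from the Specht module $S^\lambda$ (or rather its concrete realization via polytabloids inside $\mathcal{M}^\lambda$) onto the span of the Specht polynomials, and then to compare dimensions. First I would recall the $\mathfrak{S}_n$-equivariant linear map $\Phi\colon \mathcal{M}^\lambda \to \R[X]$ which sends a tabloid $\{T\}$ to the monomial $X^T$; the excerpt already records that this is well-defined on tabloids and is an $\mathfrak{S}_n$-morphism. Applying $\Phi$ to the polytabloid $e_{T_\lambda}=\sum_{\sigma\in\CStab_{T_\lambda}}\sgn(\sigma)\{\sigma T_\lambda\}$ gives, by linearity and equivariance, exactly $\sum_{\sigma\in\CStab_{T_\lambda}}\sgn(\sigma)\,\sigma(X^{T_\lambda})$, which the definition identifies with the Specht polynomial $sp_{T_\lambda}$. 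Hence $\Phi$ restricts to a morphism $S^\lambda \to \langle sp_{T_{\lambda,1}},\ldots,sp_{T_{\lambda,k}}\rangle_\R$ that is surjective by construction (the $e_{T_{\lambda,i}}$ span $S^\lambda$ and map onto the spanning set of Specht polynomials).

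Next I would invoke irreducibility: since $\operatorname{char}\K=0$, the Specht module $S^\lambda$ is irreducible, so the equivariant map $\Phi|_{S^\lambda}$ is either zero or injective. It is not zero because, e.g., for the standard tableau $T_\lambda$ the polynomial $sp_{T_\lambda}=\prod_i \Van_{\mathcal{C}_i}$ is a nonzero product of Vandermonde determinants in disjoint sets of variables. Therefore $\Phi|_{S^\lambda}$ is an isomorphism onto its image, and that image is precisely the span of the Specht polynomials associated to $\lambda$. This yields the proposition. Alternatively, one can argue directly that the module of Specht polynomials is a nonzero submodule of the module generated by the single polynomial $sp_{T_\lambda}$, that $\mathfrak{S}_n$ acts on $sp_{T_\lambda}$ exactly as on $e_{T_\lambda}$ (again by equivariance of $\Phi$), and conclude via the universal-type property of Specht modules (any module generated by an element transforming like $e_{T_\lambda}$ contains a copy of $S^\lambda$), combined with irreducibility.

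The main obstacle, and the only place where genuine care is needed, is verifying that $\Phi$ (equivalently the assignment $\{T_\lambda\}\mapsto X^{T_\lambda}$) is well-defined on tabloids and genuinely $\mathfrak{S}_n$-equivariant: one must check that $X^{T_\lambda}$ depends only on the row-equivalence class of $T_\lambda$ (which holds because $m(i)$, the row index of $i$, is a tabloid invariant) and that $\sigma\cdot X^{T_\lambda}=X^{\sigma T_\lambda}$ (which follows since $\sigma$ sends $X_i\mapsto X_{\sigma(i)}$ and the row of $\sigma(i)$ in $\sigma T_\lambda$ is the row of $i$ in $T_\lambda$). Granting this — which the excerpt already asserts — the rest is the short formal argument above: apply the equivariant map to polytabloids, identify the images with Specht polynomials via the displayed formula in the definition, and use irreducibility of $S^\lambda$ to upgrade surjectivity to an isomorphism. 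I would also remark that nonvanishing of $sp_{T_\lambda}$ can be seen most cleanly by noting its leading monomial $X^{T_\lambda}$ survives, so no delicate cancellation occurs.
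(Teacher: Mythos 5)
Your argument is correct and is essentially the route the paper itself indicates (the paper just invokes the $\mathfrak{S}_n$-equivariant assignment $\{T_{\lambda}\}\mapsto X^{T_{\lambda}}$ and cites Specht): you make the equivariant map $\mathcal{M}^\lambda\to\R[X]$ explicit, check that it carries polytabloids to Specht polynomials, and upgrade to an isomorphism via irreducibility and nonvanishing of $sp_{T_\lambda}$. The only simplification worth noting is that the characteristic-zero irreducibility step is dispensable: distinct tabloids are sent to distinct monomials, which are linearly independent, so the map is already injective on all of $\mathcal{M}^\lambda$ and hence on $S^\lambda$.
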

Using the construction of Specht polynomials we aim now to obtain a version of Theorem \ref{thm:sums of squares description}  in the case of the symmetric polynomials. For this we rely on a basis for the corresponding covariant ring. Such a basis was described by Ariki, Terasoma, and Yamada  in \cite{ATY}  with the construction of the so-called   \emph{Higher Specht polynomials}.  These polynomials generalize the Specht polynomials in such a way that they yield a concrete basis as in Proposition \ref{cor:deomposition of polynomial ring}. Their definition is given by a pair of Young tableaux. 

\begin{definition}
Let $n\in\N$ be a natural number.
\begin{enumerate}
\item We call a finite sequence $w=(w_1,\ldots,w_n)$ of non-negative integers  a \emph{word of length $n$}. 
\item A word of length $n$ is called a \emph{permutation} if the set of its  non-negative integers is $\{1,\ldots,n\}$.
\item Given a word $w$ and a permutation $u$ we define the monomial associated to the pair as  $X_u^{w}:=X_{u_1}^{w_1}\cdots X_{u_n}^{w_n}$.
\item To a given permutation $w$ its index  denoted by $i(w)$, is the word of length $n$ constructed the following way:  the word $i(w)$ contains 0 exactly at the same position where $1$ occurs in $w$. The other entries are defined recursively with the following rule: if $k$ is in position $c$ in the word $w$ (that is, $w_c=k$) and $k+1$ is in position $d$, then $i(w)_d=i(w)_c$ if $d>c$, and $i(w)_d=i(w)_c+1$ otherwise. 
\item For a partition $\lambda$ of $n$ and a standard Young tableau of shape $\lambda$, $T$ the \emph{word} of $T$ - denoted by $w(T)$ - is defined by collecting the entries of $T$ from the bottom to the top in consecutive columns starting from the left. 
\item For a pair  $(T,V)$ of standard $\lambda$-tableaux we define the monomial associated to this pair as $X_{w(V)}^{i(w(T))}.$ The degree of the monomial is called the \emph{charge of $T$}, denoted $c(T)$.
\end{enumerate}
\end{definition}

\begin{example}{An example of a monomial built from a tableau}
Consider the tableau $$\Yvcentermath1 T=\young(124,35).$$ The resulting word is given by $$w(T)=31524,$$ 
with  $$i(w(T))=10201.$$ 
Taking $$\Yvcentermath1 V= \young(135,24)$$ we obtain the monomial  $X_{w(V)}^{i(w(T))}=X_2^1X_1^0X_4^2X_3^0X_5^1 = X_2X_4^2X_5$ of degree $c(T)=1+0+2+0+1=4$.
\end{example} 

\begin{definition}
Let $\lambda\vdash n$ and $T$ be a $\lambda$-tableau. Then the \emph{Young symmetrizer} associated to $T$ is the element in the group algebra $\R[\S_n]$ defined to be 
$$\varepsilon_T=\sum_{\sigma\in \RStab_T}\sum_{\tau\in \CStab_T} \sgn( \tau) \tau\sigma.$$
Now let $T$ be a standard Young tableau, and define the \emph{higher Specht polynomial} associated with the pair $(T,V)$ to be 
$$F_V^T(X_1,\ldots,X_n):=\varepsilon_V(X_{w(V)}^{i(w(T))}).$$

\end{definition}
The importance of these higher Specht polynomials now is summarized in the following Theorem which can be found in  \cite[Theorem 1]{ATY}.
\begin{theorem}\label{thm:higher}
The following holds for the set of higher Specht polynomials.
\begin{enumerate}
\item The set $\mathcal{F}=\bigcup_{\lambda\vdash n} \mathcal{F}_\lambda$ is a basis of the covariant ring $\R[X]_{\S_n}$ over $\R[X]^{\S_n}$.
\item For any $\lambda\vdash n$ and standard $\lambda$-tableau $T$, the space spanned by the polynomials in $$\mathcal{F}^T_\lambda:=\{F^T_V, \text{ where } V \text{ runs over all standard } \lambda\text{-tableaux} \}$$ is an irreducible $\S_n$-module isomorphic to the Specht module $W^{\lambda}$. 
\end{enumerate}
\end{theorem}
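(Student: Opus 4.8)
\noindent\emph{Plan of proof.} Throughout we work over $\R$, which has characteristic $0$, so the modules $W^\lambda$ ($\lambda\vdash n$) are exactly the irreducible $\S_n$-modules and each is absolutely irreducible; we freely use Schur's Lemma (Theorem~\ref{thm:schur}) and the fact from Theorem~\ref{thm:covariant algebra} that $\R[X_1,\ldots,X_n]_{\S_n}$ is isomorphic to the regular representation, so that $\dim_\R\R[X_1,\ldots,X_n]_{\S_n}=n!$ and the multiplicity of $W^\lambda$ in it equals $f^\lambda:=\dim W^\lambda=\#\{\text{standard }\lambda\text{-tableaux}\}$. The plan is to prove statement~(2) first, in the stronger form that it holds with $V$ ranging over \emph{all} $\lambda$-tableaux, and then to deduce statement~(1) by a graded dimension count, finally reading off (2) for standard $V$. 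It is convenient to extend $w(\cdot)$ and $F^T_{(\cdot)}$ to arbitrary $\lambda$-tableaux $V$: the reading rule for $w(V)$ and the Young symmetrizer $\varepsilon_V$ make sense for any filling, and we set $F^T_V:=\varepsilon_V\big(X_{w(V)}^{i(w(T))}\big)$; write $m_V:=X_{w(V)}^{i(w(T))}$, so $F^T_V=\varepsilon_V(m_V)$ and $\deg F^T_V=c(T)$ for every $V$.

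\noindent\emph{Step 1 (an equivariant evaluation map).} Fix $\lambda\vdash n$ and a standard tableau $T$. Two elementary compatibilities with the $\S_n$-action hold: (i) $\sigma\,\varepsilon_V\,\sigma^{-1}=\varepsilon_{\sigma V}$, since conjugation carries $\RStab_V$ and $\CStab_V$ to $\RStab_{\sigma V}$ and $\CStab_{\sigma V}$ and $\sgn$ is a class function; and (ii) $\sigma\cdot m_V=m_{\sigma V}$, because $w(\sigma V)$ is obtained from $w(V)$ by applying $\sigma$ to every entry (the reading order is position-based) while $i(w(T))$ is unchanged. Together these give $\sigma\cdot F^T_V=F^T_{\sigma V}$ for every $\lambda$-tableau $V$. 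Recall the classical fact (see e.g.\ \cite{james}) that the left ideal $\R[\S_n]\,\varepsilon_T$ of the group algebra realizes the Specht module $W^\lambda$ and is irreducible. Consider the map
\[
\mathrm{ev}\colon\ \R[\S_n]\,\varepsilon_T\ \longrightarrow\ \R[X_1,\ldots,X_n],\qquad g\,\varepsilon_T\ \longmapsto\ g\cdot F^T_T=g\big(\varepsilon_T(m_T)\big).
\]
It is well defined because $g\varepsilon_T=0$ in $\R[\S_n]$ forces $(g\varepsilon_T)(m_T)=0$, and it is $\R[\S_n]$-linear, hence $\S_n$-equivariant. By $\sigma\cdot F^T_T=F^T_{\sigma T}$ and transitivity of $\S_n$ on $\lambda$-tableaux, its image is $U:=\operatorname{span}_\R\{F^T_V:\ V\text{ a }\lambda\text{-tableau}\}$.

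\noindent\emph{Step 2 (nonvanishing, linear independence, conclusion).} Since $W^\lambda$ is irreducible, $\mathrm{ev}$ is zero or injective, and it is nonzero precisely when $F^T_T\neq 0$; this nonvanishing is the key combinatorial point, established by exhibiting an explicit monomial of $\varepsilon_T(m_T)$ that survives the column antisymmetrization with nonzero coefficient — conveniently, the leading term of $F^T_T$ for a monomial order refining dominance on exponent vectors, whose existence uses that $T$ is standard and that $i(w(T))$ is weakly increasing along rows. Granting it, $\mathrm{ev}$ is an isomorphism, so $U\cong W^\lambda$ is irreducible and $\dim_\R U=f^\lambda$. It remains to show that $\mathcal F=\{F^T_V:\ \lambda\vdash n,\ T,V\text{ standard of shape }\lambda\}$ is linearly independent in $\R[X_1,\ldots,X_n]_{\S_n}$; since $\#\mathcal F=\sum_{\lambda\vdash n}(f^\lambda)^2=n!=\dim_\R\R[X_1,\ldots,X_n]_{\S_n}$, it is then automatically a basis, which is (1). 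Independence can be obtained either directly, by continuing the leading-term analysis so that distinct pairs $(T,V)$ of standard tableaux yield distinct leading monomials; or by a graded-character count: every polynomial in $\mathcal F^T_\lambda$ lies in the single degree $c(T)$, and the identity $\sum_{T\text{ standard of shape }\lambda}q^{c(T)}=$ (the fake-degree/generalized-exponent polynomial of $W^\lambda$) shows that the submodules $U_T$ already account for the full graded multiplicity of $W^\lambda$ in each degree, so the sum $\bigoplus_{\lambda}\bigoplus_T U_T$ must be direct by Theorem~\ref{thm:covariant algebra}. Either way $\mathcal F$ is an $\R$-basis of $\R[X_1,\ldots,X_n]_{\S_n}$, hence an $\R[X_1,\ldots,X_n]^{\S_n}$-module basis of $\R[X_1,\ldots,X_n]$ by Theorem~\ref{thm:covariant algebra}; this is (1). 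Finally, for a fixed standard $T$ of shape $\lambda$ the $f^\lambda$ polynomials $\mathcal F^T_\lambda$ are part of this basis, hence linearly independent, and they lie inside the $f^\lambda$-dimensional module $U\cong W^\lambda$ of Step~1; therefore $\operatorname{span}_\R\mathcal F^T_\lambda=U\cong W^\lambda$, which is irreducible. This is (2); for a suitable choice of $T$ one recovers the Specht polynomials of Proposition~\ref{pro:Specht}.

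\noindent\emph{Main obstacle.} The representation-theoretic skeleton — the equivariant evaluation map, Schur's Lemma, and the final dimension bookkeeping — is soft. The real work lies in the combinatorics: proving the nonvanishing $F^T_T\neq 0$, and arranging the leading-term/charge bookkeeping so that it simultaneously delivers linear independence. Equivalently, the load-bearing input is the statistic identity $\sum_{T\text{ standard of shape }\lambda}q^{c(T)}=$ the graded multiplicity of $W^\lambda$ in $\R[X_1,\ldots,X_n]_{\S_n}$, whose verification is essentially the content of \cite{ATY} and is where all the delicacy of the index word $i(w(T))$ and the charge statistic is spent.
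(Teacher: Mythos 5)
There is no internal proof to compare against here: the paper states this result as a quotation of \cite[Theorem 1]{ATY} and does not prove it. Judged on its own, your representation-theoretic frame is sound and is the standard one: the equivariance $\sigma\cdot F^T_V=F^T_{\sigma V}$, the evaluation map out of the irreducible left ideal $\R[\S_n]\varepsilon_T$, Schur's Lemma, and the bookkeeping $\sum_{\lambda\vdash n}(f^\lambda)^2=n!=\dim_\R\R[X]_{\S_n}$ via Theorem~\ref{thm:covariant algebra} are all correct and would reduce the theorem to two combinatorial facts.

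Those two facts, however, are precisely where your write-up has genuine gaps. First, the nonvanishing $F^T_T\neq 0$ is only asserted: with the paper's symmetrizer $\varepsilon_T=\sum_{\sigma\in\RStab_T}\sum_{\tau\in\CStab_T}\sgn(\tau)\,\tau\sigma$, one must exhibit a monomial of $\varepsilon_T\bigl(X_{w(T)}^{i(w(T))}\bigr)$ whose coefficient survives both the row symmetrization and the column antisymmetrization, and this really uses the fine structure of the index word $i(w(T))$; no such extremal monomial is produced. Second, linear independence of $\mathcal F$ is not established: the leading-term route is not carried out, and the graded-multiplicity route is logically reversed. Even granting the identity $\sum_{T}q^{c(T)}=$ graded multiplicity of $W^\lambda$ in $\R[X]_{\S_n}$ (the fake-degree/charge statement, itself a nontrivial theorem you do not prove and the paper nowhere supplies), knowing that an isotypic component in degree $d$ has multiplicity $m$ does not force $m$ given isomorphic irreducible submodules $U_T$ to be independent — directness is exactly what must be proved, and once proved it yields the identity, not conversely. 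Since you yourself concede these steps are ``essentially the content of \cite{ATY}'', your proposal, like the paper, ultimately defers the hard part to that reference; as a standalone proof it is incomplete at the decisive combinatorial steps.
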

\begin{example}{The higher Specht polynomials for $\S_3$}
Consider the group $\S_3$. The complete list of higher Specht polynomials is given by
\begin{align*}
  \{1\}& \text{ for }W^{(3)}&\\
  \left\{(X_2-X_1),\,(X_3-X_1)\right\},\;\left\{X_3(X_2-X_1),\,X_2(X_3-X_1)\right\}&\text{ for }W^{(2,1)}&\\
  \left\{(X_2-X_3)(X_3-X_1)(X_3-X_2)\right\}&\text{ for }W^{(1,1,1)}.&
\end{align*}
These correspond to the trivial representation $W^{(3)}$, the 2-dimensional Specht module $W^{(2,1)}$ and the 1-dimensional  Specht module $W^{(1,1,1)}$.
We thus compute
\begin{align*}
H_{(3)}&=1\\\\
H_{(2,1)}&=\begin{pmatrix}
R_{\S_3}((X_2-X_1)^2)&R_{\S_3}((X_2-X_1)X_3(X_2-X_1))\\
R_{\S_3}((X_2-X_1)X_3(X_2-X_1))&R_{\S_3}(X_3(X_2-X_1)^2)
\end{pmatrix}\\
&=\begin{pmatrix}
\pi_2-\frac{1}{3}\pi_1^2&-\frac{1}{3}\pi_1^3+\frac{4}{3}\pi_1\pi_2-\pi_3\\
-\frac{1}{3}\pi_1^3+\frac{4}{3}\pi_1\pi_2-\pi_3\quad &
-\frac{1}{6}\pi_1^4+\frac{2}{3}\pi_1^2\pi_2-\frac{2}{3}\pi_1\pi_3+\frac{1}{6}\pi_2^2
\end{pmatrix}\\\\
H_{(1,1,1)}&=R_{\S_3}((X_1-X_2)(X_1-X_3)(X_2-X_3))\\
&=\frac{1}{6}(-{{\pi_1}}^{6}+9\,{{\pi_1}}^{4}{\pi_2}-8\,{{\pi_1}}^{3}{\pi_3}-21
\,{{\pi_1}}^{2}{{\pi_2}}^{2}+36\,{\pi_1}\,{\pi_2}\,{\pi_3}+3\,{{
\pi_2}}^{3}-18\,{{\pi_3}}^{2})
\end{align*}
Using Theorem \ref{thm:sums of squares description} one now obtains  that every symmetric sum of squares polynomial $f\in\R[X_1,X_2,X_3]^{\S_3}$  can be written in the from
\[
f=\sigma_0(\pi_1,\pi_2,\pi_3)+\sigma_1(\pi_1,\pi_2,\pi_3) H_{(1,1,1)}+Tr(M(\pi_1,\pi_2,\pi_3)\cdot H_{(2,1)}),
\]
where $\sigma_0(\pi_1,\pi_2,\pi_3)$, and $\sigma_1(\pi_1,\pi_2,\pi_3)$ are sums of squares in $\R[\pi_1,\ldots,\pi_3]$ and $M\in\R[\pi_1,\ldots,\pi_3]^{2\times 2}$  is a sum of squares matrix polynomial.
\end{example}
Furthermore, we can use the combinatorial description of the Higher Specht polynomials to gain understanding of the ``complexity'' of  symmetric sums of squares decomposition of polynomials of fixed degree $2d$. Indeed, we find that the  multiplicities $m_{\lambda}$ of the $\S_n$-modules $W^\lambda$ which appear in an isotypic decomposition  $\R[X_1,\ldots,X_n]_{=d}$ coincide with  the number of standard $\lambda$-tableaux $S$ with charge  at most $d$, i.e., all $S$ with $c(S)\leq d$.
This yields in particular the following observation which was first remarked in \cite[Theorem 4.7.]{riener2013exploiting}, and can be generalized to all infinite families of reflection groups \cite[Theorem 3.31]{debus2020reflection}.
\begin{theorem}
\label{thm:stabel}
Let $H_{n,2d}^{\S_n}$ denote the homogeneous symmetric polynomials of degree $2d$. Then the matrix size needed to decide if $f\in H_{n,2d}^{\S_n}$ is a sum of squares do not depend on $n$ once $n\geq 2d$.
\end{theorem}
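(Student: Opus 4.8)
The plan is to recast the problem as a stability statement about multiplicities of irreducible $\S_n$-modules. First I would invoke Corollary~\ref{cor:sos} (the matrix-polynomial form of Theorem~\ref{THM Decomp}): since a homogeneous form of degree $2d$ is a sum of squares exactly when it is a sum of squares of homogeneous forms of degree $d$, deciding whether a given $f\in H_{n,2d}^{\S_n}$ is a sum of squares is a semidefinite feasibility problem whose blocks are indexed by the irreducible modules $W^\lambda$ occurring in $\R[X_1,\ldots,X_n]_{=d}$, the block attached to $\lambda$ having size $\eta_\lambda\times\eta_\lambda$, where $\eta_\lambda$ is the multiplicity of $W^\lambda$ in $\R[X_1,\ldots,X_n]_{=d}$. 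So it suffices to show that, for $n\ge 2d$, both the set of partitions $\lambda$ with $\eta_\lambda\neq 0$ and the numbers $\eta_\lambda$ themselves are independent of $n$.

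Next I would translate $\eta_\lambda$ into combinatorics. By Theorem~\ref{thm:covariant algebra} one has $\R[X_1,\ldots,X_n]\cong\R[X_1,\ldots,X_n]^{\S_n}\otimes_\R\R[X_1,\ldots,X_n]_{\S_n}$ as graded $\S_n$-modules, and by Theorem~\ref{thm:higher} the multiplicity of $W^\lambda$ in the degree-$k$ piece of the covariant algebra is the number of standard $\lambda$-tableaux $T$ with charge $c(T)=k$. As the invariant ring carries only the trivial representation in each degree, Proposition~\ref{cor:bound} then expresses $\eta_\lambda$ as a finite sum combining the dimensions $N_{\S_n}(d-k)$ of the spaces of invariant forms of degree $d-k$ with the counts $\#\{T\ \text{standard}\ \lambda\text{-tableau}:c(T)=k\}$ for $k\le d$. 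Each $N_{\S_n}(d-k)$ equals the number of partitions of $d-k$ and is already $n$-independent once $n\ge d$, so the whole statement reduces to the purely combinatorial claim: \emph{for $n\ge 2d$ and $k\le d$, the number of standard tableaux of shape $\lambda$ with charge $k$ depends only on $k$ and on the partition $\bar\lambda$ obtained from $\lambda$ by deleting its first row, and is $0$ unless $|\bar\lambda|\le d$.}

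Proving this combinatorial claim is, I expect, the main obstacle. I would argue directly from the recursive definition of the index word $i(w(T))$: unwinding it gives $c(T)=\sum_j(n-j)$, the sum taken over those $j$ for which $j+1$ precedes $j$ in the word $w(T)$ read column by column from the bottom up. The key is a forcing statement: if $1,\dots,t$ all lie in the first row of $T$ and $t<n-d$, then putting $t+1$ anywhere outside the first row puts $j=t$ into that sum with contribution $n-t>d$, already exceeding the charge. Hence for any tableau of charge at most $d$ the entries $1,\dots,n-d$ occupy the first $n-d$ cells of the first row; these cells contribute nothing to the charge, and the remaining $d$ entries fill a skew shape of exactly $d$ cells whose form depends on $\lambda$ only through $\bar\lambda$ and $d$ — here one uses $\lambda_2\le|\bar\lambda|\le d\le n-d$, i.e.\ precisely $n\ge 2d$, to see that the $d-|\bar\lambda|$ leftover cells of the first row sit strictly to the right of all lower rows, so this skew shape splits as a single row together with a copy of $\bar\lambda$. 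On this bounded piece only the \emph{relative} word-order of the top $d+1$ values matters, so the charge is computed by an $n$-independent formula, giving a charge-preserving bijection between the charge-$\le d$ tableaux of shape $(n-|\bar\lambda|,\bar\lambda)$ and a finite set depending only on $\bar\lambda$ and $d$; in particular $|\bar\lambda|\le d$ is forced.

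Putting this together: for $n\ge 2d$ the partitions with $\eta_\lambda\neq 0$ are exactly the $\lambda=(n-|\bar\lambda|,\bar\lambda)$ with $\bar\lambda$ ranging over the finite set of partitions of size at most $d$, and each $\eta_\lambda$ equals a number depending only on $\bar\lambda$ and $d$. Hence the number of blocks and all block sizes $\eta_\lambda$ in the semidefinite program of Corollary~\ref{cor:sos} are bounded by a quantity depending on $d$ alone, which is exactly the assertion. The only delicate point is the forcing/charge analysis of the third paragraph; everything else is bookkeeping with the decompositions already established.
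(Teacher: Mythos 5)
Your proposal is correct and follows essentially the same route as the paper: reduce the block sizes to the multiplicities $\eta_\lambda$ via Theorem~\ref{THM Decomp}/Corollary~\ref{cor:sos} and Proposition~\ref{cor:bound}, count these multiplicities by standard tableaux with bounded charge via the higher Specht basis (Theorem~\ref{thm:higher}), and show that these counts stabilize once $n\geq 2d$. The paper only asserts this stabilization and cites \cite{riener2013exploiting}; your forcing argument (charge $\leq d$ forces $1,\dots,n-d$ into the first row, and the remaining $d$ entries fill a split skew shape with an $n$-independent charge statistic) correctly supplies the combinatorial detail that the paper delegates to that reference.
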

A similar phenomenon was also shown in  situations where $\S_n$ is not acting as a reflection group \cite{raymond2018symmetric}.
The stabilization observed in Theorem \ref{thm:stabel} makes  it  in particular possible to give uniform descriptions for symmetric sums of squares of a given degree. For example we have the following representation theorem of symmetric quartics which was derived in \cite{blekherman2012nonnegative} with the methods presented in this section.

\begin{theorem}\label{thm:decom}
Let $f\in H_{n,4}^{\S_n}$ be a symmetric quartic. Then  $f$
is a sum of squares if and only if it can be written in the form
\begin{eqnarray*}
f^{(n)}&=&\alpha_{11}\pi_1^4+2\alpha_{12}\pi_1^2\pi_2+\alpha_{22}\pi_2^2\\
&+&\beta_{11}\left(\pi_1^2\pi_2-\pi_1^4\right)+2\beta_{12}\left(p_{(3,1)}-\pi_1^2\pi_2\right)+\beta_{22}\left(\pi_4-\pi_2^2\right)\\
&+&\gamma\left(\frac{1}{2}
\pi_1^4-\pi_1^2\pi_2+\frac{n^2-3n+3}{2n^2}\pi_2^2+\frac{2n-2}{n^2}
\pi_1\pi_3+\frac{1-n}{2n^2} \pi_4\right),
\end{eqnarray*}
where  $\pi_j=\frac{1}{n}(X_1^j+\ldots X_n^j)$ for $1\leq j\leq 4$  and the parameters $\alpha_{11},\alpha_{12}\,\alpha_{22},\beta_{11},\beta_{12},\beta_{22}$ are chosen 
such that $\gamma\geq 0$ and the matrices $\begin{pmatrix}
\alpha_{11}&\alpha_{12}\\
\alpha_{12}&\alpha_{22}
\end{pmatrix}
$ and $\begin{pmatrix}
\beta_{11}&\beta_{12}\\
\beta_{12}&\beta_{22}
\end{pmatrix}$ are positive semidefinite.
\end{theorem}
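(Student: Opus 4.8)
The plan is to specialize the general description of invariant sums of squares, Theorem~\ref{THM Decomp} and its matrix--polynomial reformulation Corollary~\ref{cor:sos}, to $G=\S_n$ acting by permutation of the variables, and then to make each ingredient explicit in this low degree; the independence of the answer from $n$ is an instance of Theorem~\ref{thm:stabel}. Since $f$ is homogeneous of degree $4$, any representation $f=\sum_i p_i^2$ can be replaced by $f=\sum_i (p_i^{(2)})^2$, where $p_i^{(2)}$ is the degree-$2$ homogeneous part of $p_i$, so we may restrict the Gram form to the $\S_n$-submodule $V:=\R[X_1,\ldots,X_n]_{=2}$ of quadratic forms, to which the argument of Theorem~\ref{THM Decomp} applies verbatim.

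First I would compute the isotypic decomposition of $V$. The span of the squares $X_1^2,\ldots,X_n^2$ is the permutation module $\mathcal{M}^{(n-1,1)}\cong W^{(n)}\oplus W^{(n-1,1)}$, and the span of the products $X_iX_j$ with $i<j$ is the permutation module $\mathcal{M}^{(n-2,2)}\cong W^{(n)}\oplus W^{(n-1,1)}\oplus W^{(n-2,2)}$ for $n\ge 4$. Hence, for $n\ge 4$,
\[
V\;\cong\;2\,W^{(n)}\;\oplus\;2\,W^{(n-1,1)}\;\oplus\;W^{(n-2,2)},
\]
and the dimension count $2+2(n-1)+\tfrac{n(n-3)}{2}=\binom{n+1}{2}$ confirms that nothing is missing. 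All irreducible $\S_n$-modules are of type~I over $\R$, so Corollary~\ref{cor:sos} applies with no caveat from the real-versus-complex discussion and yields that $f$ is a sum of squares if and only if
\[
f=\Tr\!\big(A_0\,B^{(0)}\big)+\Tr\!\big(A_1\,B^{(1)}\big)+\Tr\!\big(A_2\,B^{(2)}\big)
\]
for positive semidefinite $A_0\in\Sym_2(\R)$, $A_1\in\Sym_2(\R)$ and $A_2\in\Sym_1(\R)=\R_{\ge 0}$, where $B^{(0)},B^{(1)},B^{(2)}$ are the matrix polynomials \eqref{eq:B} built from cyclic generators of the three isotypic components of $V$.

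The remaining step is to identify the $B^{(i)}$. For the trivial component I would take the generators $\pi_1^2$ and $\pi_2$; since they are invariant, $B^{(0)}$ is a positive scalar multiple of $\left(\begin{smallmatrix}\pi_1^4 & \pi_1^2\pi_2\\ \pi_1^2\pi_2 & \pi_2^2\end{smallmatrix}\right)$, and $\Tr(A_0 B^{(0)})$ is exactly $\alpha_{11}\pi_1^4+2\alpha_{12}\pi_1^2\pi_2+\alpha_{22}\pi_2^2$ with $A_0=(\alpha_{ij})\succeq 0$. For the component of type $W^{(n-1,1)}$ I would fix two cyclic generators that are symmetry adapted, one in the span of the $X_i^2$, say $X_1^2-\tfrac1n p_2$, and one with a nonzero $X_iX_j$-part, say $X_1p_1-\tfrac1n p_1^2$, and compute the Reynolds averages $\mathcal{R}_{\S_n}(g_ug_v)$; re-expressing these in power sums and setting $\pi_j=\tfrac1n p_j$ produces, up to a congruence by a positive diagonal matrix (which preserves positive semidefiniteness), the three symmetric quartics $\pi_1^2\pi_2-\pi_1^4$, $p_{(3,1)}-\pi_1^2\pi_2$ and $\pi_4-\pi_2^2$ of the statement, so that $\Tr(A_1 B^{(1)})$ is the $\beta$-part with $A_1=(\beta_{ij})\succeq 0$. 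For the one-dimensional component $W^{(n-2,2)}$ I would take the Specht polynomial of shape $(n-2,2)$, a product of two length-two Vandermonde factors of total degree $2$, as generator and compute the Reynolds average of its square; after $\pi_j=\tfrac1n p_j$ this becomes the bracketed quartic multiplying $\gamma$, with $A_2=\gamma\ge 0$. The ``only if'' direction is then Corollary~\ref{cor:sos}, and ``if'' is immediate: writing $f=\sum_i\Tr(A_i B^{(i)})=\sum_i\sum_{g\in G}(f^g)^{T}A_i f^g$ with $f^g$ the vector of $G$-translates of the chosen generators, and $A_i=\sum_t a_ta_t^{T}$, exhibits $f$ as an honest sum of squares.

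The main obstacle is precisely this last computation: evaluating the Reynolds averages and rewriting them in the power-sum basis, in particular pinning down the coefficients $\tfrac{n^2-3n+3}{2n^2}$, $\tfrac{2n-2}{n^2}$ and $\tfrac{1-n}{2n^2}$ in the $\gamma$-term, which come from averaging the square of the $(n-2,2)$-Specht polynomial over $\S_n$ and require careful bookkeeping of orbit sizes; one must also select the two generators of the copies of $W^{(n-1,1)}$ compatibly, so that Schur's Lemma (Theorem~\ref{thm:schur}) produces a genuine $2\times 2$ block $A_1$ with an off-diagonal entry $\beta_{12}$. Finally, the small cases $n\le 3$ have to be checked directly: there $W^{(n-2,2)}$ does not exist, and one verifies that the bracketed $\gamma$-quartic together with the three $\beta$-quartics vanish identically under $\pi_j=\tfrac1n p_j$, so that the stated formula remains correct with $\gamma$ and $(\beta_{ij})$ contributing nothing.
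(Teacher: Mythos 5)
Your strategy is exactly the one the paper intends: the text does not actually prove Theorem~\ref{thm:decom} but attributes it to \cite{blekherman2012nonnegative} ``with the methods presented in this section'', i.e.\ restrict the Gram form to the quadratic forms, use the isotypic decomposition $2\,W^{(n)}\oplus 2\,W^{(n-1,1)}\oplus W^{(n-2,2)}$, and apply Theorem~\ref{THM Decomp}/Corollary~\ref{cor:sos}. Your structural steps are right (the reduction to homogeneous quadratics, the dimension count, the type-I remark, the Cholesky argument for the ``if'' direction), and your choice of generators for the $W^{(n-1,1)}$ block does work: $\mathcal{R}_{\S_n}\bigl((X_1^2-\tfrac1n p_2)^2\bigr)=\pi_4-\pi_2^2$, $\mathcal{R}_{\S_n}\bigl((X_1p_1-\tfrac1n p_1^2)^2\bigr)=n^2(\pi_1^2\pi_2-\pi_1^4)$, and the cross term is $n(\pi_1\pi_3-\pi_1^2\pi_2)$, so the $\beta$-block is indeed a positive diagonal congruence of the stated one. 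The genuine gap is that the part of the statement that carries all the content — the explicit $\gamma$-quartic with coefficients $\tfrac{n^2-3n+3}{2n^2}$, $\tfrac{2n-2}{n^2}$, $\tfrac{1-n}{2n^2}$ — is only announced, not computed: you never evaluate $\mathcal{R}_{\S_n}\bigl(((X_i-X_j)(X_k-X_l))^2\bigr)$ and rewrite it in the $\pi_j$, and you yourself flag this as ``the main obstacle''. Without that computation (or at least the Schur-type argument that any generator of the multiplicity-one component gives the same ray, so it suffices to check the claimed quartic is the average of one concrete squared Specht polynomial), the theorem's explicit form is not established.

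A second, smaller error is your treatment of $n\le 3$: it is not true that ``the bracketed $\gamma$-quartic together with the three $\beta$-quartics vanish identically''. For $n=2,3$ the module $W^{(n-1,1)}$ is present and the $\beta$-quartics are nonzero (e.g.\ for $n=2$, $\pi_4-\pi_2^2=\tfrac14(X_1^2-X_2^2)^2$); what happens is that only the $\gamma$-quartic degenerates to the zero polynomial, and for $n=2$ the two copies of $W^{(n-1,1)}$ in the quadratics collapse to one, so the $2\times2$ $\beta$-parametrization becomes redundant but still correct. The small cases need this corrected bookkeeping, not the blanket vanishing you assert.
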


\section{Miscellaneous approaches}\label{ssec:Misc}
So far we have been focused on ways to explore symmetries in sums of squares decompositions which are used to obtain an approximation hierarchy for  polynomial optimization problems. To close our discussion, we present here some other ways to use symmetries in optimization problems. The selection we present here is non exhaustive and the presentation of the individual methods is rather short. However, we hope this overview will provide useful points of references for the reader. 

\subsection{Orbit spaces and polynomial optimization}\label{ssec:OrbSp}
The methods of invariant theory presented in Section \ref{sec:invariant} have provided that we can represent an invariant polynomial algebraically in terms of generators of the invariant ring. Indeed,  Theorem \ref{thm:hilbertfinite} yields that  $\K[X]^G$ is finitely generated and therefore is the coordinate ring of an algebraic variety. So we can associate to a choice of generators $\pi_1,\ldots, \pi_m$ and  the corresponding  inclusion $\K[\pi_1,\ldots,\pi_m]\subseteq \K[X_1,\ldots,X_n]$ a morphism $\Pi$ defined explicitly via
\[\
\abb{\Pi}{\K^n}{\K^m}{x}{(\pi_1(a),\ldots,\pi_m(a))}.\] This map is also called the \emph{Hilbert map}.
Let $a\in\K^n$, then the \emph{orbit of $a$} denoted by $G_a$ is the set of points to which $a$ is mapped to under the action of $G$, i.e.
$$G_a:=\{g(a)\:\ g\in G\}.$$ 
As the orbits $G_a$ and $G_b$ of two points in $\K^n$ are either equal or disjoint, 
the action of $G$ on $\K^n$ naturally defines an equivalence relation by $a\sim b$ if and only if $b=g(a)$ for some $g\in G$ and we can consider the set of  equivalence classes, i.e.,  the set of all $G$-orbits on $\K^n$ and denote this  by $\K^n/G$. This set is called the \emph{orbit space}. Notice that the Hilbert map is constant on $G$-orbits, so it makes sense to view it as a mapping of the orbit space.
Now if have a finite group $G$ acting on a linear space defined over an  algebraically closed field, for example $\K=\C$, then  
the polynomial mapping defined above is surjective onto the $n$ dimensional variety $V_\Pi\subset\C^m$ defined by the algebraic relations between the $m$ polynomials $\pi_1,\ldots,\pi_m$. In particular, if the polynomials are algebraically independent, which is exactly the case if $G$ is a finite reflection group, each orbit is mapped to a point in $\C^n$. Moreover if $g_1,\ldots,g_k$ are invariant polynomials that describe an algebraic set in $\C^n$ the Hilbert map sends this set to a new algebraic set in $\C^n$ which is given by the polynomials $\gamma_1,\ldots,\gamma_k$ which satisfy $\gamma_j(\pi_1,\ldots,\pi_m)=g_j\,\text{ for } 1\leq j\leq k$.
Therefore, the possibility to represent invariant functions in terms of generators can reduce the complexity  of the description of invariant algebraic sets. 

In contrast to the algebraically closed case where the Hilbert map is surjective, if we restrict $\Pi$ to $\R^n$  the resulting map $\tilde{\Pi}$ may fail to be surjective. 
\begin{example}{Failure of surjectivity in the $D_4$ case}
Let $G=D_4$ be the Dihedral group acting on $\R^2$. Then the invariant ring $\C[X,Y]^{D_4}$ is generated by  $\pi_1=X^2+Y^2$ and $\pi_2=X^2Y^2$ and since $D_4$ is a reflection group $\pi_1$ and $\pi_2$ are  algebraically independent. The Hilbert map thus provides a connection between the orbit space  $\C^{2}/D_4$ and $\C^2$. However, if we restrict the map $\pi$ to $\R^2$ the  image of $\Pi$ is strictly contained in $\R^2$. Indeed, we must have  $\pi_1(x,y)\geq 0$ for all $(x,y)\in \R^2$, and thus for example $\Pi^{-1}(-1,0)\not\in\R^2$. Hence, the restricted map  $\tilde{\Pi}$ is not surjective. 
\end{example} 
Nevertheless, the real Hilbert map 
\begin{eqnarray*}
  {\Pi} : \R^n & \to & \R^n/G \ \subseteq \ \R^m \\
           a & \mapsto & (\pi_1(a) , \ldots, \pi_m(a))
\end{eqnarray*}
defines an embedding of the orbit space into $\R^m$ and by the Traski-Seidenberg principle of real algebraic geometry its image is a semi-algebraic subset of $\R^m$. It can be shown that the failure of the map being surjective is related to the existence of abelian $2$-subgroups of $G$ (see \cite{broecker}). Therefore, as soon as the order of the group is even, additional semi-algebraic conditions need to be imposed. For example, in the example above, every point in the image needs to  satisfy  \[\Pi(\R^2)\subseteq \{(z_1,z_2)\in\R^2\ :\ z_1\geq 0\}\] since $\pi_1(x,y)\geq 0$ for all $(x,y)\in\R^2$.

It was shown by Procesi and Schwarz \cite{procesi1985inequalities} that the image of $\Pi$ is in fact  a basic semialgebraic set, i.e., there exist finitely many polynomial inequalities which are satisfied if and only if a point is in the image of the map. Moreover, in the case of compact groups, these inequalities can be obtained from the chosen fundamental invariants in a direct way:

For a polynomial $p$ we consider the differential $dp$ defined by
$dp = \sum_{j=1}^n \frac{\partial p}{\partial x_j} d x_j$.
For finite (compact) $G$ we have a $G$-invariant inner product
$\langle \cdot , \cdot \rangle$ which when carrying over the to the differentials yields
$\langle dp, dq \rangle = \sum_{j=1}^n \frac{\partial p}{\partial x_j} \cdot
  \frac{\partial q}{\partial x_j}$ . Since differentials of $G$-invariant polynomials are $G$-equivariant,  the inner products $\langle d\pi_i, d \pi_j \rangle$
$(i,j \in \{1, \ldots, m\})$ are $G$-invariant, and hence every
entry of the symmetric matrix polynomial
\[
  J \ = \ ( \langle d\pi_i, d \pi_j\rangle)_{1 \le i,j \le m}
\]
is $G$-invariant. With this construction Procesi and Schwarz \cite{procesi1985inequalities} have shown the following.
\begin{theorem} \label{pr:procesischwarz}
Let $G \subseteq \mathrm{GL}_n(\R)$ be a compact matrix group, and let
$\Pi = (\pi_1, \ldots, \pi_m)$ be fundamental invariants of $G$.
Then the orbit space is given by polynomial inequalities,
\[
  \R^n / G \ = \ \Pi(\R^n) \ = \ \left\{z \in \R^n \, : \, J(z) \succeq 0 , \,
     z \in V(I) \right\} \, ,
\]
where $I \subseteq \R[z_1, \ldots, z_m]$ is the ideal of relations of
$\pi_1, \ldots, \pi_m$.
\end{theorem}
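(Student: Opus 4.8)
The plan is to prove both inclusions of the set equality. The inclusion $\Pi(\R^n) \subseteq \{z : J(z)\succeq 0,\ z\in V(I)\}$ is the easy direction: if $z = \Pi(a)$ for some $a\in\R^n$, then $z\in V(I)$ trivially since $I$ consists of the algebraic relations among $\pi_1,\ldots,\pi_m$. For positive semidefiniteness, observe that $J(z) = J(\Pi(a))$ is, by construction, the Gram matrix $(\langle d\pi_i|_a, d\pi_j|_a\rangle)_{i,j}$ of the $m$ differentials $d\pi_1|_a,\ldots,d\pi_m|_a$ viewed as vectors in the inner product space $(\R^n, \langle\cdot,\cdot\rangle)$ (using $\langle dp,dq\rangle = \sum_j \partial_j p\,\partial_j q$). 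A Gram matrix of any finite family of vectors in a real inner product space is always positive semidefinite, so $J(z)\succeq 0$. Here one must check that the entries of the matrix polynomial $J$, which are $G$-invariant, really do descend to well-defined polynomial functions $J(z)$ on $\Pi(\R^n)$; this follows because each $\langle d\pi_i,d\pi_j\rangle \in \R[X]^G$ can be rewritten as a polynomial in $\pi_1,\ldots,\pi_m$ by Hilbert's theorem (Theorem \ref{thm:hilbertfinite}), so $J$ factors through $\Pi$.

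The substantive direction is the reverse inclusion: every $z$ with $z\in V(I)$ and $J(z)\succeq 0$ lies in the image $\Pi(\R^n)$. I would argue by contraposition using the theory of the real Hilbert map. Since $\Pi$ is proper (for a compact group the fibers are compact orbits and $\Pi$ separates orbits), its image $\Pi(\R^n)$ is a closed semialgebraic subset of the real variety $V(I_\R)$, where $I_\R$ is the real radical; in particular $\Pi(\R^n)$ is exactly the set of real points of the affine quotient that are "reachable." Take $z\in V(I)$ that is not in $\Pi(\R^n)$. The key is to produce a point in a small real neighbourhood analysis: one stratifies $V(I)$ by orbit type, and on the principal stratum the map $\Pi$ is a submersion onto an open subset of the smooth locus, with the rank of the differential of $\Pi$ at a point $a$ controlled precisely by the rank of the Gram matrix $J(\Pi(a))$ — indeed $J(\Pi(a))$ has rank equal to the dimension of the span of $\{d\pi_i|_a\}$, which is the dimension of the image of $d\Pi_a$. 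The heart of the Procesi–Schwarz argument is then a boundary/Łojasiewicz-type estimate showing that as one approaches a point $z$ of $V(I)$ that is on the topological boundary of $\Pi(\R^n)$ from inside the image, the matrix $J$ degenerates — a null vector of $J(z)$ records the "missing direction" out of the image — so that $J(z)\not\succeq 0$ strictly beyond the boundary. Equivalently: the semialgebraic description of $\partial\Pi(\R^n)$ is cut out exactly by the vanishing of appropriate minors of $J$, and the sign condition $J\succeq 0$ is precisely what is needed to stay on the correct side.

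The main obstacle, and the part I expect to be genuinely delicate, is this second inclusion — specifically, showing that the single matrix inequality $J(z)\succeq 0$ together with the equations $I$ suffices, with no further inequalities needed. This requires (i) the structure theory of the orbit space as a Whitney-stratified semialgebraic set with strata indexed by conjugacy classes of isotropy subgroups, (ii) the identification of the differential-rank of $\Pi$ at $a$ with $\operatorname{rank} J(\Pi(a))$, valid because $d\Pi_a$ has the $d\pi_i|_a$ as rows and $J = (d\Pi)(d\Pi)^T$ up to the inner product, and (iii) a real-algebraic argument — Procesi and Schwarz use a careful curve-selection / Łojasiewicz inequality near the boundary — to see that crossing any stratum of $\partial\Pi(\R^n)$ forces $J$ to pick up a negative eigenvalue. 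I would treat steps (i) and (ii) as standard and citable from \cite{procesi1985inequalities} (and the background in \cite{broecker}), and devote the real work to (iii); for the purposes of this survey it is reasonable to state the theorem and refer to \cite{procesi1985inequalities} for the full proof of the hard inclusion, giving in detail only the Gram-matrix argument for the easy inclusion.
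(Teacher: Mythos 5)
The paper itself gives no proof of this theorem: it is stated as a known result and referred entirely to Procesi and Schwarz \cite{procesi1985inequalities}, so there is no in-paper argument to compare against. Your treatment is consistent with that: the Gram-matrix argument for the inclusion $\Pi(\R^n)\subseteq\{z\in V(I): J(z)\succeq 0\}$ is correct (with the one point, which you note, that the entries of $J$ are $G$-invariant --- this uses that the inner product is $G$-invariant, i.e.\ that $G$ may be taken orthogonal --- and hence descend to polynomials in $z$, well defined on $V(I)$ up to elements of $I$). For the reverse inclusion you give only a heuristic sketch and then defer to \cite{procesi1985inequalities}; be aware that the actual Procesi--Schwarz proof is not a \L{}ojasiewicz-type boundary estimate on the strata of $\partial\Pi(\R^n)$, but an induction on the dimension of the group using the slice representation at a point of a putative fiber, reducing the statement to the same statement for smaller (isotropy) representations. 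So your step (iii) as described should not be presented as ``the'' argument of the cited paper, but as an attribution the sketch of which you have not verified; deferring the hard inclusion to the reference, as you propose, is exactly what the chapter does.
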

This statement now allows to use invariant theory in order to rewrite an invariant optimization problem: If we consider a polynomial optimization problem of the form 
\begin{equation}\label{eq:pop1}
\inf \{ p(x) \,\text{s.t. }{g_1}(x)\geq 0,\ldots,{g_m}(x)\geq 0 \},
\end{equation}
where we assume that the polynomials $p,g_1,\ldots, g_m$ are invariant by a group $G$, then by choosing $\pi_1,\ldots,\pi_m$ which generate the invariant ring we obtain corresponding polynomials $\tilde{p},\tilde{g}_1,\ldots\tilde{g}_m\in\R[\pi_1,\ldots,\pi_m]$, which might be of smaller degrees than the original polynomials. By using  Theorem \ref{pr:procesischwarz} we may now translate the optimization problem \eqref{eq:pop1} to the following equivalent optimization problem:
\begin{equation}\label{eq:pop2}
 \inf \{ \tilde{p}(z)\, \text{s.t.\, } z \in V(I_\Pi),\tilde{g_1}(z)\geq 0,\ldots,\tilde{g_m}(z)\geq 0,\,  J(z) \succeq 0 \, \}.
\end{equation}
So the Hilbert map allows us to reformulate the initial polynomial optimization problem in new polynomial functions, which potentially can be of smaller degree or involve less variables, at the price that we obtain a new set of constraints that come from the fact that the Hilbert map fails in general to be surjective.
\begin{example}{The choice of generators matters}
The matrix polynomial $J$, as well as the specific description of the original problem in terms of generators, depend on the choices made for the generators. Thus different choices might lead to quite different optimization problems. 
Consider the Motzkin polynomial $M=X_1^4X_2^2+X_1^2X_2^4-3X_1^2X_2^2+1$. We can rewrite this polynomial in terms of elementary symmetric polynomials, i.e.,
$\pi_1=X_1+X_2$ and $\pi_2=X_1X_2$. 
Then 
\[\tilde{M}_e=\pi_1^2\pi_2^2 - 2
\pi_2^3 - 3\pi_2^2 + 1.\]
The corresponding matrix polynomial $J_e$ is given by  \[J_e=\begin{pmatrix}2&\pi_1\\\pi_1&\pi_1^2-2\pi_2\end{pmatrix}.\]
 Now the minimal value of $M$ on $\R^2$ is the same as the minimal value of $\tilde{M}_e$ in the set of points $\left\{(\pi_1,\pi_2)\in \R^2\,:\,J_e(\pi_1,\pi_2)\succeq 0\right\}$.
On the other hand we can choose the power sum polynomials 
$\pi_1=X_1+X_2$, $\pi_2=X_1^2+X_2^2$ and we  obtain \[\tilde{M}_p=\frac{1}{4}(\pi_1^{4}\pi_2-3\,{\pi_1}^{4}-2\,{\pi_1}^{2}{\pi_2}^{2}+6
\,\pi_1^{2}\pi_2+\pi_2^{3}-3\,\pi_2^{2}+4).\]
Furthermore, we find for the corresponding matrix polynomial \[J=\begin{pmatrix}2&\pi_1\\\pi_1&\pi_2\end{pmatrix}.\] Now the minimal value of $M$ on $\R^2$ is the same as the minimal value of $\tilde{M}$ in the set of points $\left\{(\pi_1,\pi_2)\in \R^2\,:\,J(\pi_1,\pi_2)\succeq 0\right\}$.
\end{example}
The main advantages of the approach sketched above is that it may reduce the degrees of the polynomials describing the optimization problem and even can reduce the number of variables. Furthermore, optimal points to the equivalent optimization problem \eqref{eq:pop2} correspond to entire orbits of optimal points. Therefore the number of such points may also be drastically smaller. Finally, in contrast to the methods described in the main parts of this chapter, which were designed for sums of squares approximations, this approach reduces the symmetry on the formulation and therefore allows integration with other methods for solving polynomial optimization problems. However, it was observed in \cite{RTJL} that the special structure of the optimization problem \eqref{eq:pop2} - namely the fact that the additional constraints are expressed in terms of a \emph{polynomial matrix inequality}  $J(z)\succeq 0$ - allows to use a specially adapted version of the SOS-moment hierachy established by Hol and Scherer (and dually by Henrion and Lasserre). We briefly sketch this approach. 
\begin{trailer}{Orbit space formulations and moment relaxations}
For a polynomial matrix $G(x)$, define the localizing matrix as follows
$$M(G\star\,y)_{i,j,l,k}=L(b_i\cdot b_j\cdot G(x)_{l,k}) \,.$$
With this construction  one can define the following moment relaxation for the orbit space formulation of a polynomial optimization  problem defined in  \eqref{eq:pop2} for $k\in\N$:
\begin{equation}\label{qutient}
  Q_k^q:\quad\begin{array}{rcl}
   \multicolumn{3}{l}{\inf_{y} \sum_{\alpha} \tilde{p}_{\alpha} y_{\alpha}} \\
  M_k(y) & \succeq & 0 \, , \\
  M_{k - \lceil \deg\tilde{g}_j / 2 \rceil}(\tilde{g}_j \, y) & \succeq & 0,\\
   M_{k-m}(J \star y) & \succeq & 0 \,.\\
  \end{array}
\end{equation}
For $k$ large enough,  each solution to the sequence of semi-definite programs defined in \eqref{qutient} provides a lower bound for the problem and moreover, under additional conditions, the sequence of relaxations converges. (see \cite{riener2013exploiting} for details). 
This relaxation approach recently has also been adapted to multiplicative group actions (see \cite{hubert}).
\end{trailer}

\subsection{Reduction via orbit decomposition}\label{ssec:AMGM}
In the symmetry reduction techniques for sums of squares presented in this chapter, the first  essential step consisted in the observation that for convex sets one can reduce easily to orbits and the second step was grounded in consequences of  Schur's lemma.  Approaches using the second step  are however clearly  limited to those  situations that allow for an application of Schur's lemma. However, even in cases where sophisticated tools from representation theory will not directly apply, one can obtain substantial reductions by cleverly decomposing invariant problems according to orbits.
A natural idea consists for example in turning a large symmetric problem into smaller problems, in such a way that solving these smaller problems gives at least a solution by orbit, therefore giving a full understanding of the solutions of the initial problem. 
Such ideas can be useful, and sometimes refined, in several situations, including in optimization and computation (see for instance \cite{faugere2023computing, herr2013exploiting, Schrmann2013ExploitingSI}). 
We showcase one example of this simple, but sometimes effective, idea in the context of polynomial optimization: the orbit decomposition of SAGE signomials. 

In order to motivate this situation we recall that the main key for the approximations methods used in the previous sections comes from the observation that a polynomial which  can be written as a sum of squares is non-negative. Of course, one can replace this condition  with other certificates  of non-negativity certification. Another example of such certificates relies on SAGE functions (see \cite{chandrasekaran2016relative}). 
A signomial is a function of the form \[f(x) = \sum_{\alpha \in \A}c_\alpha e^{<\alpha,x>}\] where $\A \subset \R^n$ is a finite set. Under the change of variables $y_i = e^{x_i}$, this can be seen as a generalization of polynomial functions restricted to  the positive orthant. Suppose a signomial $f$ can be written in the from  \begin{equation}\label{AGE}f(x) = \sum_{\alpha \in \A}c_\alpha e^{<\alpha,x>} + d e^{<\beta,x>}\end{equation}   $c_\alpha >0$ for $\alpha \in \A$ and $\beta \in \relint(\A)$. Then,  it follows from  arithmetic-geometric mean inequality that  $f$    is non-negative on $\R^n$ if and only if there exists $\nu \in R^\A_+$ such that \[\sum_{\alpha \in \A}\nu_\alpha \alpha =\left(\sum_{\alpha \in \A} c_\alpha\right)\beta\; \text{ and }\sum_{\alpha \in \A} \nu_\alpha \ln \frac{\nu_\alpha}{e \cdot c_\alpha} \leqslant d.\] 
A function of this form is called an \emph{AGE function}, and a sum of AGE functions is called a \emph{SAGE function}. Clearly, a SAGE function is non-negative on $\R^n$, and  
similarly to the sum of squares formulation one can define a \emph{SAGE-approximation} of the polynomial optimization problem \eqref{pop} via
 \[f^{SAGE} = \sup\{\lambda,\ f-\lambda \textrm{ is SAGE}\} \leqslant  \inf\{f(x),\ x \in \R^n\}
 \] 
to obtain a bound on the minimum of a given signomal function $f$ on $\R^n$. The observation which makes this formulation computationally viable is that the task to decide if a given signomial is SAGE can be decided via \emph{relative entropy programming} (a subclass of convex programs, see \cite{chandrasekaran2016relative}).

 Now, if we suppose that a signomial $f$ as written in \eqref{AGE} is invariant by the action of a group $G$ (linearly on the exponents) and denote by $\hat{\B}$ be a set of orbit representatives for $\B$ by this action, then  applications of  the Reynolds operator reveal (see \cite{moustrou2022symmetry} for details) that  $f$ is SAGE if and only if for every $\beta \in \hat{\B}$, there exists an AGE signomial \[h_{\beta} = \sum_{\alpha \in \A}c_{\beta,\alpha} e^{<\alpha,x>} + d_\beta e^{<\beta,x>}\] such that \[f= \sum_{\beta \in \hat{\B}} \sum_{\rho \in G/\Stab(\beta)} \rho h_\beta,\]
 where the functions $h_{{\beta}}$ can be chosen to be invariant under the action of the stabilizer $\Stab_G(\beta)$ of $\beta$.

\begin{example}{SAGE decomposition of a symmetric signomial}\label{exa:SAGEdecomposition} Consider the symmetric signomial \[f=5 e^{6x} + 5e^{6y} + 5e^{6z}  -e^{2x+y+z} - e^{x+2y+z} - e^{x+y+2z} +6.\] With the notation above  we have $\hat{\B}= \{ (1,1,2)\}$. The stabilizer of $(1,1,2)$ is $\S_2 \times \S_1$ and $f$ is SAGE if and only if there exists an AGE polynomial of the form \[g= c_1 e^{6x} + c_2 e^{6y} + c_3e^{6z} + c_4 - e^{x+y+2z}\] such that \[f= g + (1\ 3)g + (2\ 3)g.\] Moreover, since we can assume that $g$ is invariant under the action of $\S_2 \times \S_1$, we can assume that $c_1=c_2$. Identifying the coefficients we find therefore that $c_1= c_2 = 1$, $c_3=3$ and $c_4=2$. Thus, $f$ is SAGE if and only if \[g= e^{6x} + e^{6y} + 3e^{6z}  - e^{x+y+2z} +1\] is AGE, and therefore we directly arrive at a substantial reduction of the associated relative entropy program. 
\end{example}
In fact the reduction of complexity suggested in the example above can be made precise by understanding the orbits and stabilizers of the exponents appearing in the signomial.
In particular in the case of symmetric AGE functions, i.e., invariant by the group $\S_n$, it can be shown (see \cite[Theorem 5.2]{moustrou2022symmetry}) that one can expect a stabilization of complexity with $n$ similar to Theorem \ref{thm:stabel}: for some sequences of signomials, if the number of terms grows quickly, the number of variables and constraints in the corresponding relative-entropy program remains constant for $n$ large enough. Therefore, even in a situation where we are not able to use the power of representation theory, we can turn large optimization problems involving a lot of variable and constraints into a smaller one that can be solved efficiently.

\subsection{Symmetries of optimizers}\label{ssec:SymSol}
We conclude this section with a rather general approach, which may be used in some situations. Again, let $G$ be a group acting linearly on a vector space $V$. Given an optimization problem that is symmetric, i.e., invariant by a group $G$, one could expect that also the solutions exhibit symmetries. Firstly, and clearly, the set of optimal points will be closed under the action of the symmetry group. In the best case, also the optimal points themselves will be fixed by the group.  If we know apriori that this is the case, we only need to look at the linear subspace of points which are invariant by $G$. If the action of $G$ is not the trivial one, this subspace will be of smaller dimension thus, one obtains directly a reduction of the optimization problem. The following example is one easy and simple prototype of this idea.

\begin{example}{A symmetric problem with a symmetric solution}
Given $a>0$ we want to find  the maximal area of an rectangle of perimeter $a$.  The solution is the square and one find directly that the side length is $\frac{1}{4}a$.
\end{example}
However,  it is in general not clear what conditions on the optimization problem guarantee this property (see for example \cite{waterhouse} for a beautiful exhibition of situations and some historical panorama). But in particular, if there is  just one optimal point, this optimal point has to be invariant by the group. So, we can conclude that in particular for convex optimization problems, this approach might directly lead to a reduction of dimension. Building on this simple idea, it may also be beneficial if one knows apriori that the optimal points are fixed by some (non trivial) subgroup of $G$.  For a subgroup $G'\subset G$ we denote by $V^{G'}$ the elements in $V$ fixed by $G'$.
It is a simple observation that $V^{G'}$ is a vector subspace of $V$ which might be of smaller dimension. Thus by restricting the original optimization problem to the smaller vector space $V^{G'}$ we reduce the dimension of the problem.

We present some situations in which this approach can lead to some substantial reduction. Building on the example above, suppose that we deal with an optimization problem defined in symmetric polynomials and we thus consider the action of the symmetric group $\S_n$ on $\R^n$. The action of $\S_n$ on $\R^n$ naturally decomposes the space into orbits. 
\begin{definition}
For every $x\in \R^n$, the associated stabilizer subgroup $\Stab(x)\subseteq \S_n$ is of the form
$$\Stab(x) \simeq \S_{\ell_1}\times \S_{\ell_2}\times \cdots \times \S_{\ell_k}$$
with $\ell_1\geq \ell_2\geq \ldots\geq \ell_k$. We hence define the \emph{orbit type} of $x$ to be  
\[
\Lambda(x):=(\ell_1, \ell_2,\cdots, \ell_k).
\]
Then, for a given $\lambda\vdash n$ we can define $$H_\lambda:=\left\{x\in \K^n\,:\, \Lambda(x)=\lambda\right\},$$
 and
$$A_k:=\{ x\in\R^n \,:\, \text{ the orbit type of } x \text{ has length at most k}\}$$
\end{definition}
\begin{remark}
For $k\in\N$, the set $A_k$ as defined above is a union of $k$-dimensional linear (sub)spaces of $\R^n$. 
\end{remark}
\begin{example}{Example $A_1$}
For $k=1$ we find $A_1=\{t\cdot(1,\ldots,1),\, t\in\R\}$ and in general $A_d$ can be identified with the set of points having at most $d$ distinct coordinates
\end{example}
Now we have the following (see \cite[Theorem ~4.5]{riener2012degree}, and \cite{timofte-2003}).

\begin{theorem}\label{thm:degree}
Let $f,g_1,\ldots,g_m \in \R[X]^{\S_n}$ and let 
set $$r:=\max\big\{2,\lfloor (\deg f)/2\rfloor, \deg g_1,\ldots,\deg g_m\big\}.$$ Consider the optimization problem 
$$f^*=\inf_{x\in K} f(x), \text{ where } 
K=\{x \in \R^n \, : \, g_1(x)\geq 0,\ldots, g_m(x)\geq 0\}.$$ 
If the set of optimizers is not empty it contains at least one point $x^*\in A_r$, and in fact 
$$f^*=\inf_{x\in K\cap A_r} f(x)$$
\end{theorem}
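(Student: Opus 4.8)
The plan is to show that a \emph{given} optimizer $\bar x$ lies in $A_r$, after two preliminary reductions; the second reduction is precisely what produces the factor $\tfrac12$ in the exponent $\lfloor(\deg f)/2\rfloor$.

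\textbf{Two reductions.} I may assume $\bar x$ has at least $r$ distinct coordinate values (otherwise $\bar x\in A_{r-1}\subseteq A_r$ and there is nothing to prove), so $n\ge r$. Put $p_i:=X_1^i+\dots+X_n^i$. Since each $g_j$ is symmetric of degree $\le r$, Newton's identities write it as a polynomial in $p_1,\dots,p_r$; hence $g_j$ is constant on the fibre $V:=\{x:p_i(x)=p_i(\bar x),\ 1\le i\le r\}$, equal to $g_j(\bar x)\ge0$, so $V\subseteq K$. As $\bar x\in V\subseteq K$ we get $\min_V f=f^*$, so $\bar x$ \emph{minimizes $f$ over $V$}; moreover $V$ is compact (the equation $p_2=p_2(\bar x)$, available because $r\ge2$, confines $V$ to a sphere) and smooth at $\bar x$ (there the Jacobian of $(p_1,\dots,p_r)$ has full rank $r$, since $\bar x$ has $\ge r$ distinct coordinates). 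The second reduction: on $V$, $f$ is affine in the top power sums. Indeed, a monomial $\prod_i p_i^{e_i}$ of total degree $\sum_i i\,e_i\le\deg f\le 2\lfloor(\deg f)/2\rfloor+1\le 2r+1$ cannot contain two factors $p_i,p_j$ with $i,j>r$, so on $V$ it becomes a constant or a constant times a single $p_i$ with $i>r$; collecting terms, $f|_V=\mathrm{const}+\sum_{k=1}^n h(X_k)$ for a univariate $h$ of degree $\le\deg f$. (If $h\equiv0$ then $f$ is constant on $V$; replacing $h$ by $t^{r+1}$ or $t^{r+2}$, whichever has even degree, and running the remaining argument with this auxiliary separable objective produces a point of $A_r$ on which $f$, being constant on $V$, still equals $f^*$.)

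\textbf{Optimality conditions and a root count.} Because $\bar x$ minimizes $f|_V=\mathrm{const}+\sum_k h(X_k)$ over the smooth manifold $V$, and $f$ and $\sum_k h(X_k)$ have the same gradient on $V$ up to the normal directions $\nabla p_i$, there are multipliers $\lambda_1,\dots,\lambda_r$ with $\nabla\!\big(\sum_k h(X_k)\big)(\bar x)=\sum_i\lambda_i\nabla p_i(\bar x)$; in coordinates, $h'(\bar x_k)=\tilde q(\bar x_k)$ for all $k$, where $\tilde q(t):=\sum_i\lambda_i\,i\,t^{i-1}$ has degree $\le r-1$. Thus every $\bar x_k$ is a root of $q:=h'-\tilde q$, a polynomial of degree $\le 2\lfloor(\deg f)/2\rfloor=:2d$. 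Let $c_1<\dots<c_\ell$ be the distinct values of $\bar x$, with multiplicities $m_1,\dots,m_\ell$. The second-order necessary condition says that the Hessian of the Lagrangian $\sum_k h(X_k)-\sum_i\lambda_i p_i$, which equals $\mathrm{diag}\big(q'(\bar x_k)\big)$, is positive semidefinite on $T_{\bar x}V$; evaluating it on a vector supported on the coordinates equal to $c_l$ with zero sum (such a vector is tangent to $V$) yields $q'(c_l)\ge0$ whenever $m_l\ge2$. Now I invoke the elementary fact that a nonzero polynomial $q$ of degree $\le 2d$ vanishing at $\ell$ points, with $q'\ge0$ at each of them, has degree $\ge 2\ell-1$ (by Rolle's theorem together with sign analysis: between two consecutive such points $q$ must change sign from $+$ to $-$, which forces an additional root, and each repeated point forces additional multiplicity). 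Hence $2\ell-1\le 2d$, so $\ell\le d\le r$, i.e.\ $\bar x\in A_d\subseteq A_r$; as $\bar x\in K$ with $f(\bar x)=f^*$, this is the desired optimizer, and $f^*\le\inf_{K\cap A_r}f\le f(\bar x)=f^*$ gives the claimed equality of infima.

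\textbf{The main obstacle.} The root count uses $q'(c_l)\ge0$ at \emph{every} value $c_l$, whereas the second-order condition only provides this at coordinates of multiplicity $\ge 2$. I therefore expect the real difficulty to be the case where $\bar x$ has multiplicity-one coordinates: one must first replace $\bar x$ by an optimizer with no such coordinates. The natural approach is induction on $n$ --- freeze the coordinates of multiplicity $\ge2$ and apply the theorem to $f$ restricted to the remaining (fewer) multiplicity-one variables, where it is symmetric of no larger degree, so as to replace that block by a more degenerate one of equal $f$-value --- but arranging that the \emph{total} number of distinct values stays $\le r$ throughout requires care. Two further routine-but-nontrivial points are making the Lagrange and second-order arguments rigorous on the set $V$ (which becomes singular when coordinates collide, although this does not occur at a point with $\ge r$ distinct values) and reducing the case of active constraints $g_j=0$ to the strict-inequality case by a closure argument. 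Everything else rests on the two clean structural observations: $K$ is cut out by power-sum equations of degree $\le r$, and along the corresponding fibres $f$ is affine in the top half of the power sums.
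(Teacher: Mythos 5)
First, note that the chapter does not prove Theorem~\ref{thm:degree} at all: it is quoted from the literature (Timofte, and Riener's proof via power-sum fibres). Your setup is exactly the standard route of that cited proof, and the preliminary reductions are correct and cleanly argued: symmetric polynomials of degree at most $r$ lie in $\R[p_1,\ldots,p_r]$, so the fibre $V$ of $(p_1,\ldots,p_r)$ through a feasible point is contained in $K$ (this also makes your worry about active constraints moot — no closure argument is needed); $V$ is compact because $r\ge 2$ fixes $p_2$; on $V$ the objective becomes a constant plus a separable function $\sum_k h(X_k)$ because no monomial in the power sums of degree $\le 2\lfloor\deg f/2\rfloor+1$ can contain two factors $p_i$ with $i>r$; and at a point with at least $r$ distinct coordinates the Lagrange and second-order conditions apply, the Hessian of the Lagrangian being $\mathrm{diag}(q'(\bar x_k))$ with $q=h'-\tilde q$. (Small slip: $\deg q\le\max(2\lfloor\deg f/2\rfloor,\,r-1)$, not $2\lfloor\deg f/2\rfloor$; harmless, since when $r-1$ dominates the first-order conditions alone give at most $r-1$ distinct values.)

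However, the gap you flag at the end is not a technicality to be cleaned up later — it is the entire content of the half-degree principle, and moreover your global strategy of showing that the \emph{given} optimizer $\bar x$ lies in $A_r$ cannot succeed: the theorem only asserts that \emph{some} optimizer lies in $A_r$, and for instance $f=(p_2-1)^2$ (degree $4$, $r=2$) has every point of the unit sphere as an optimizer, almost none of which lie in $A_2$; so a selection of a special minimizer on the fibre is unavoidable, and your argument makes no such selection beyond the degenerate case $h\equiv 0$. Concretely, at a minimizer of $\sum_k h(X_k)$ on $V$ with distinct values $c_1<\dots<c_\ell$, the within-group tangent directions give $q'(c_l)\ge 0$ only when $m_l\ge 2$; the remaining second-order information is the aggregate condition that $\mathrm{diag}(m_lq'(c_l))$ is positive semidefinite on the $(\ell-r)$-dimensional kernel of the Vandermonde-type constraint matrix, which only bounds the number of values with $q'(c_l)<0$ by $r$. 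Combined with your (correct) root-counting lemma this yields $\ell\le \lfloor\deg f/2\rfloor+r$, not $\ell\le r$ — i.e.\ first-order conditions plus the counting you have give only a degree-principle-type bound. Your proposed repair (freeze the repeated coordinates and induct on the multiplicity-one block) is only sketched, and as you yourself observe it does not control the \emph{total} number of distinct values, since the sub-problem is symmetric of the same degree and its $\le r$ values must be merged with the frozen ones. Until a mechanism is supplied that either produces an optimizer with the pointwise sign condition at all of its values or otherwise forces $\ell\le r$ (this is precisely what the cited proof of Riener, respectively Timofte's original variational argument, accomplishes), the proposal does not prove the theorem; it also does not address the asserted equality $f^*=\inf_{K\cap A_r}f$ when the infimum is not attained, although your fibre-wise setup would be well suited to that once the key step is in place.
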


Now, a very direct approach to make use of this result consists in restricting to the different sub-spaces that build up $A_r$ individually. 
We consider the set $\Lambda_{r,n}$ of all partitions $\lambda$  of $n$ into exactlt $r$ parts.  Then for each
$\lambda \in \Lambda_{r,n}$, set $$f^{\lambda}:=f(\underbrace{T_1,\ldots,T_1}_{\lambda_1},\underbrace{T_2,\ldots,T_2}_{\lambda_2},\ldots,\underbrace{T_{r},\ldots,T_{r}}_{\lambda_{r}}) \in \R[T_1, \ldots, T_r] \, .
$$
Similarly, let
$K^{\lambda}:=\{t \in\R^r \, : \, g_1^{\lambda}(t) \geq0,\ldots,{g_m^{\lambda}(t) \geq0\}}$.
With this easy substitution  the original optimization problem in $n$ variables can be transformed into a set of new optimization problems that involve
only $r$ variables,
\begin{equation}\label{eq:partition}
\inf_{x\in K} f(x) \ = \
\min_{\omega\in\Omega}\inf_{t\in K^{\omega}} f^{\omega}(t) \, .
\end{equation}
\begin{remark}
Note that $|\Lambda_{r,n}|\leq \binom{n+r}{r}$ and therefore, for fixed $r$, the amount of additional problems in $r$ variables grows only polynomially in $n$.
\end{remark}
The main advantage of the simple approach based on Theorem \ref{thm:degree} consists in the possibility to use any  method to solve the resulting $r$-dimensional polynomial optimization problems. In particular, it has been observed in \cite{riener2013exploiting,debus2020reflection} that combining this approach with the sum of squares approach can lead to  qualitative improvements of the approximation. There have been several generalizations of Theorem \ref{thm:degree}: the paper \cite{moustrou2021symmetric} provides a finer criterion than only the degree of the polynomials involved to decide on the orbit type of solutions.   In \cite{riener2016symmetric, schabert} special  classes of symmetric polynomials are  defined, depending on special representation of the polynomials in the power sum basis or in the basis of elementary symmetric polynomials. For these classes an approach identical to the one described above can be derived from \cite[Theorem 2.6]{riener2016symmetric} or  \cite[Theorem 24]{schabert}. Furthermore, it can be shown that a similar statement holds in fact for all finite reflection groups \cite{acevedo2016test,friedl2018reflection} and even in the setup of the symmetric group not acting as a reflection group \cite{pct}. Finally, via Morse theory the fact that optimal points to special symmetric optimization problems tend to have some high symmetry also has impact on the topological complexity of the orbit space of $\S_n$-invariant semi-algebraic sets (see \cite{BS,BS2}) and a refined statement can be used to give complexity reduction in various algorithmical questions  related to the topology of symmetric semi-algebraic sets (see \cite{basu2017efficient,basu2022vandermonde}). Finally, one can observe that that after the restriction to $A_r$ in some cases the resulting system will still have some symmetries. This was for example used in \cite{faugere2023computing,vu} to obtain more efficient algorithmic results for computing critical  points.

\begin{acknowledgement}
This expository article originates from  a minicourse  ``Symmetries in algorithmic questions in real algebraic geometry'' held during  the virtual POEMA (Polynomial Optimization, Efficiency Through Moments and Algebra) Learning Weeks in July  2020.
  The authors would like to thank  the anonymous
  reviewers for helpful comments.
\end{acknowledgement}

\bibliographystyle{abbrv}
\bibliography{dummy.bib}

\end{document}